\newcommand{\quotient}{\ensuremath{C^2(F,3) / \partial C^2(F,4)}\xspace}
\newcommand{\Fs}{\ensuremath{F^\times}\xspace}
\newcommand{\MZ}{\ensuremath{\mathbbm{Z}}\xspace}
\newcommand{\MQ}{\ensuremath{\mathbbm{Q}}\xspace}
\theoremstyle{plain} %% This is the default, anyway
\newtheorem{thm}{Theorem}[section]
\newtheorem{cor}[thm]{Corollary}
\newtheorem{conj}[thm]{Conjecture}
\newtheorem{lem}[thm]{Lemma}
\newtheorem{prop}[thm]{Proposition}
\newtheorem{ex}[thm]{Example}
\theoremstyle{definition}
\newtheorem{defn}[thm]{Definition}
\newtheorem*{Ack}{Acknowledgment}
\newcounter{roem}
\renewcommand{\theroem}{\roman{roem}}
\newcommand{\c@org@eq}{}
\let\c@org@eq\c@equation
\newcommand{\org@theeq}{}
\let\org@theeq\theequation
\newcommand{\setroem}{
\let\c@equation\c@roem
\let\theequation\theroem}
\newcommand{\setarab}{
\let\c@equation\c@org@eq
\let\theequation\org@theeq}
\theoremstyle{remark}
\newtheorem{rem}[thm]{Remark}
\newcommand{\Kriz}{K\v r\'i\v z }
\newcommand{\specF}{\ensuremath{\mathsf{Spec}(F)}\xspace}
\author{Oliver Petras}
\title{Functional equations of the dilogarithm in motivic cohomology}
\date{\today}
\address{Fachbereich Physik, Mathematik und Informatik\\ Johannes Gutenberg -- Universit\"at Mainz \\ Staudinger Weg 9 \\ D -- 55099 Mainz}
\email{opetras@mathematik.uni-mainz.de}
\subjclass [2000]{11G55, 11R70, 11S70, 11F42}
\begin{document}

\begin{abstract}
We prove relations between fractional linear cycles in Bloch's integral cubical higher Chow complex in codimension two of number fields, which correspond to functional equations of the dilogarithm. These relations suffice, as we shall demonstrate with a few examples, to write down enough relations in Bloch's integral higher Chow group $CH^2(F,3)$ for certain number fields $F$ to detect torsion cycles. Using the regulator map to Deligne cohomology, one can check the non-triviality of the torsion cycles thus obtained.

Using this combination of methods, we obtain explicit higher Chow cycles generating the integral motivic cohomology groups of some number fields.
\end{abstract}

\maketitle

\section{Introduction}
Computing integral motivic cohomology groups $H_{\mathcal{M}}^{p,q}(X,\MZ)$ of a smooth scheme $X$ over some field $\mathbbm{k}$ explicitly is a very hard task in general. Even rationally, there are many examples of schemes with infinite dimensional motivic cohomology spaces $H_{\mathcal{M}}^{p,q}(X,\MZ)\otimes_\MZ\MQ$. But at least for rings of integers $\mathcal{O}_F$ for number fields $F$, where motivic cohomology is only a refinement of algebraic $K$--theory, one has the following theorem:
\begin{thm}[Borel \cite{Bo77}]
 $$dim_{\mathbbm{Q}}(K_n(\mathcal{O}_F)\otimes\mathbbm{Q}) = \begin{cases} 1, & n=0, \\ 0, & n\geq 2 \textrm{ even,}\\ r_1+r_2-1, & n=1,\\ r_2, & n = 2m-1, m \textrm{ even,} \\ r_1+r_2, & n = 2m-1, m \textrm{ odd,}\end{cases}$$ where $r_1$ (resp. $r_2$) denotes the number of real (resp. not conjugate complex) embeddings of $F$ into $\mathbbm{C}$.
\end{thm}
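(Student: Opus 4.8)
We indicate the structure of Borel's proof. The plan is to translate the problem into a computation of the \emph{stable real cohomology} of the arithmetic groups $SL_m(\mathcal{O}_F)$, which Borel identifies with the de~Rham cohomology of a compact symmetric space; the ranks in the statement then emerge from a classical computation of invariant differential forms.

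First I would reduce to a purely homological statement. By Quillen's plus construction, and since $SK_1(\mathcal{O}_F)=0$, the space $BSL(\mathcal{O}_F)^{+}$ is simply connected with $\pi_n\big(BSL(\mathcal{O}_F)^{+}\big)\cong K_n(\mathcal{O}_F)$ for $n\ge 2$; being an H--space, its rational homology is a connected graded-commutative Hopf algebra, hence by Milnor--Moore is free on its primitives, and these primitives in degree $n$ are exactly $K_n(\mathcal{O}_F)\otimes\MQ$. Dually, for $n\ge 2$ the dimension $\dim_\MQ K_n(\mathcal{O}_F)\otimes\MQ$ equals the number of algebra generators in degree $n$ of
$$H^{*}\big(SL(\mathcal{O}_F),\MQ\big)=\varinjlim_m H^{*}\big(SL_m(\mathcal{O}_F),\MQ\big),$$
and homological stability for $SL_m(\mathcal{O}_F)$ guarantees that $H^{n}$ is already attained for $m\gg n$. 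The two remaining cases are classical: $K_0(\mathcal{O}_F)\cong\MZ\oplus\mathrm{Cl}(\mathcal{O}_F)$ has rank $1$ because the class group is finite, and $K_1(\mathcal{O}_F)\cong\mathcal{O}_F^{\times}$ has rank $r_1+r_2-1$ by Dirichlet's unit theorem.

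The core input, and the step I expect to be the main obstacle, is Borel's stability theorem: writing $G_\infty=SL_m(\mathbbm{R})^{r_1}\times SL_m(\mathbbm{C})^{r_2}$ and $K\subset G_\infty$ for a maximal compact subgroup, the natural map
$$H^{*}_{ct}(G_\infty,\mathbbm{R})\cong H^{*}(\mathfrak{g},K;\mathbbm{R})\longrightarrow H^{*}\big(SL_m(\mathcal{O}_F),\mathbbm{R}\big)$$
is injective in all degrees and bijective for $*\le c(m)$ with $c(m)\to\infty$. By van~Est's theorem the left-hand side is the cohomology of the compact dual $X_u$ of the symmetric space $G_\infty/K$, represented by $G_\infty$-invariant forms. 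One then represents the cohomology of the locally symmetric space $SL_m(\mathcal{O}_F)\backslash G_\infty/K$ by square-integrable harmonic forms, using reduction theory for arithmetic groups to control their behaviour near the cusps, and one shows that the contributions of the non-trivial automorphic representations $\pi$ --- measured by the relative Lie algebra cohomology $H^{*}(\mathfrak{g},K;\pi_\infty)$ --- vanish in the stable range. This vanishing is the genuinely hard analytic and representation-theoretic heart of the argument, resting on a precise estimate for the decay of matrix coefficients; once it is in place, the invariant forms exhaust $\Gamma$-cohomology in the stable range.

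It remains to compute the invariant cohomology. The compact dual of $SL_m(\mathbbm{R})/SO(m)$ is $SU(m)/SO(m)$, whose rational cohomology in the limit $m\to\infty$ is an exterior algebra on generators of degrees $5,9,13,\dots$ --- that is, the degrees $\equiv 1\pmod 4$ that are at least $5$; the compact dual of $SL_m(\mathbbm{C})/SU(m)$ is the compact Lie group $SU(m)$, with cohomology in the limit an exterior algebra on generators of all odd degrees $\ge 3$. Counting, for a fixed $n$, the generators in degree $n$ of $H^{*}(SL(\mathcal{O}_F),\mathbbm{R})$, which by the above is stably $\bigotimes_{\text{real }v}H^{*}\big(SU(m)/SO(m)\big)\otimes\bigotimes_{\text{complex }v}H^{*}\big(SU(m)\big)$: for even $n\ge 2$ there are none, so the rank is $0$; for $n=2m-1\ge 3$ each of the $r_2$ complex places contributes one generator, and each of the $r_1$ real places contributes one precisely when $n\equiv 1\pmod 4$, i.e.\ when $m$ is odd. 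This gives rank $r_2$ for $m$ even and $r_1+r_2$ for $m$ odd, which together with the values in degrees $0$ and $1$ is exactly the formula asserted.
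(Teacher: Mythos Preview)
The paper does not give its own proof of this statement: Borel's theorem is quoted in the introduction as a known result, attributed to \cite{Bo77}, and the paper immediately moves on to use it as background. There is therefore no proof in the paper to compare your proposal against.

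That said, your sketch is a faithful outline of Borel's original argument: the reduction via the plus construction and Milnor--Moore to counting algebra generators of $H^{*}(SL(\mathcal{O}_F),\mathbbm{R})$, the identification of stable cohomology with the continuous cohomology of $G_\infty$ (and hence with the cohomology of the compact dual), and the final count of exterior generators for $SU/SO$ and $SU$ all line up with Borel's paper. The delicate point you flag --- that non-trivial automorphic contributions to $H^{*}(\mathfrak{g},K;\pi_\infty)$ vanish in the stable range --- is indeed the analytic heart, and you would need to invoke Borel's matrix-coefficient estimates (or, in later treatments, results of Matsushima--Murakami type together with the theory of $(\mathfrak{g},K)$-cohomology) rather than leave it as a black box. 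But as an outline there is no gap; it simply is not something the present paper attempts.
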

Since $K_{2n-1}(\mathcal{O}_F)\cong K_{2n-1}(F)$ for $n\geq 2$ \cite{Wei}, at least for number fields, where the algebraic $K$--groups are finitely generated abelian groups, it should be possible to find explicit generators of the integral motivic cohomology. This can be done using their connections to other, better understood, groups.

First, Voevodsky has shown \cite{Voe} that for a smooth quasi-projective variety $X$ over some field $\Bbbk$ we have $H_{\mathcal{M}}^{p,q}(X,\MZ) \cong CH^{q}(X,2q-p)$, where the latter groups denote Bloch's higher Chow groups \cite{bloch:1986}. We shall investigate $CH^2(F,3):=CH^2(\specF,3)$ for number fields $F$. By results of Suslin \cite{Sus86} it is known that $CH^2(F,3)\cong K^{ind}_3(F)$, where the right-hand side denotes the indecomposable part of Quillen's algebraic $K$--group $K_3(F)$ defined as its quotient by the image of Milnor's $K$--group $K_3^{M}(F)$. %Later Suslin has shown \cite{suslin:1991} that there is a short exact sequence relating $K^{ind}_3(F)$ for an infinite field $F$ to the Bloch group $B_2(F)$ of $F$:
%\begin{equation}
%\label{eq:sus}
%0\rightarrow Tor_1^{\MZ}(\Fs,\Fs)^\sim\rightarrow K^{ind}_3(F)\rightarrow B_2(F)\rightarrow 0.
%\end{equation}
%The group on the left denotes the unique nontrivial $\MZ / 2\MZ$--extension of the group $Tor_1^{\MZ}(\Fs,\Fs)$.

Secondly, Bloch and \Kriz have used the cubical version of higher Chow groups in their work on mixed Tate motives to define a natural homomorphism $$\rho_2:\MZ[\Fs]\rightarrow Z^2(F,3)$$ from the group ring of $F^\times$ to the cycle complex of admissible cycles of codimension two in the affine $3$-cube over $F$ (cf. \cite{BK:1995}). By the work of Gangl and M\"uller-Stach \cite{HGaSM99}, this map induces a well-defined homomorphism $$\overline{\rho}_2\otimes \MQ: B_2(F)\otimes\MQ\longrightarrow CH^2(F,3)\otimes\MQ,$$ which is supposed to induce a rational isomorphism between the so-called Bloch group $B_2(F)$ and the higher Chow group of $F$. We recall here that the Bloch group is roughly given by the subquotient of the free abelian group $\mathbbm{Z}[\Fs]$ modulo relations reflecting functional equations of the dilogarithm \cite{Sus86}.

Gangl's and M\"uller-Stach's method to prove the well-definedness of the map $\overline{\rho}_2\otimes \MQ$ was to prove the relations among the images of $(\overline{\rho}_2\otimes \MQ)(a), a\in \Fs$, i.e. to find appropriate cycles, in Bloch's rational higher Chow complex corresponding to the functional equations of the dilogarithm which define the Bloch group $B_2(F)$.

In this article, we shall complete the relations from \cite{HGaSM99} in Bloch's rational higher Chow complex to relations among the images of $\overline{\rho}_2(a), a\in \Fs,$ in Bloch's integral higher Chow complex. This enables us to write down enough relations, i.e. cycles, in the integral higher Chow groups in codimension two of some number fields to find torsion cycles. With the help of the generalized Abel -- Jacobi map of \cite{KLM04} we can also determine the non-triviality of these torsion cycles. Once we know the order of $CH^2(F,3)_{tors}$ for some number field $F$ (e.g. from \cite{Wei}), we can at least in principle use our methods to find generators among the images $\overline{\rho}_2(a)$ for specific $a\in\Fs$.

This is part of the author's doctoral thesis at the Johannes Gutenberg -- Universit\"at Mainz. Other parts of the thesis about similar computations in $CH^3(F,5)$ will appear elsewhere.

\begin{Ack}
 I most heartily would like to thank Stefan M\"uller-Stach for posing the original problem, his excellent supervision of my work and very great encouragement throughout the time in Mainz when progress was slow. Further, I thank Herbert Gangl very warmly for several discussions, the invitation to Durham, new ways of looking at my results and ideas to clarify the presentation.

Many thanks to an unknown referee who enormously helped to improve a preprint form of this article and also pointed out the reference to Hulsbergen's book.

Finally, financial support from the DFG project Mu 825/5-1 and the SFB/TR 45 is greatly acknowledged.
\end{Ack}

\paragraph*{\textit{Notation}}
 In the text, we let $\mathbbm{k}$ be an arbitrary field, while $F$ always denotes an infinite field, e.g. a number field. All tensor products are taken in the category of $\MZ$-modules.

\section{Bloch's higher Chow groups}
Let us begin by recalling the definition of Bloch's higher Chow groups. Since there are many good expositions in literature, we only consider the cubical version keeping in mind that Levine \cite{levine:1994} established a quasiisomorphism to the ``original'' simplicial version due to Bloch \cite{bloch:1986}.

We let $\mathbbm{k}$ be a field and $$\Box^n_\mathbbm{k}=(\mathbbm{P}^1_{\mathbbm{k}}\setminus \{1\})^n$$ with coordinates $(z_1,\ldots,z_n)$ be the algebraic standard cube with
 $2^n$ faces of codimension $1$: $$\partial\Box^n_\mathbbm{k}=\bigcup_{i=1}^n\{(z_1,\ldots,z_n)\in\Box^n_\mathbbm{k}|
z_i\in\{0,\infty\}\}$$ and faces of codimension $k$ $$\partial^k\Box^n_\mathbbm{k}=\bigcup_{i_1<\ldots <i_k}\{(z_1,\ldots,z_n)\in\Box^n_\mathbbm{k}|
z_{i_1},\ldots,z_{i_k}\in\{0,\infty\}\}.$$
In case the field $\mathbbm{k}$ is clear or irrelevant, we shall drop the subscript in the rest of the article. We now let $X$ be a smooth quasi-projective variety over $\mathbbm{k}$ and then write $Z^p(X,n)=c^p(X,n) / d^p(X,n)$ for the quotient of the free abelian group $c^p(X,n)$ generated by integral closed algebraic subvarieties of codimension $p$ in $X\times \Box^n_\mathbbm{k}$ which are admissible (i.e. meeting all faces of all codimensions in codimension $p$ again -- or not at all) modulo the subgroup $d^p(X,n)$ of degenerate cycles (i.e. pull-backs of $X\times facets,$ where a facet is a component of $\partial\Box^n$ by coordinate projections $\Box^n\rightarrow\Box^{n-1}$). These groups form a simplicial abelian group:
\renewcommand{\arraystretch}{0.3}
$$\ldots Z^p(X,3) \begin{array}{l}\rightarrow \\ \rightarrow \\ \rightarrow \\ \rightarrow \end{array} Z^p(X,2) \begin{array}{l} \rightarrow \\ \rightarrow \\ \rightarrow \end{array} Z^p(X,1)\begin{array}{l} \rightarrow \\ \rightarrow \end{array} Z^p(X,0).$$
\renewcommand{\arraystretch}{1}
\begin{defn}
Bloch's higher Chow groups $CH^p(X,n)$ are the homotopy or equivalently homology groups of the above complex with respect to Bloch's boundary map given by $$\partial_B=\sum_i (-1)^{i-1}(\partial_i^0-\partial_i^\infty),$$ where $\partial_i^0,\partial_i^\infty$ denote the restriction maps to the faces $z_i=0$ resp. $z_i=\infty$: $$CH^p(X,n):=\pi_n(Z^p(X,\bullet))=H_n(Z^p(X,\bullet),\partial_B).$$
\end{defn}

\begin{thm}[\cite{FrieVoe99}]
Assume that a field $\mathbbm{k}$ admits resolution of singularities and let $X$ be a smooth quasi-projective variety over $\mathbbm{k}$. Then Bloch's higher Chow groups are isomorphic to the motivic cohomology groups: $$CH^p(X,n)\cong H^{2p-n,p}(X,\MZ).$$
\end{thm}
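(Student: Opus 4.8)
The plan is to reduce the identification to a comparison of complexes of sheaves on the small Nisnevich (equivalently Zariski) site of $X$, and then to pass to hypercohomology. First I would recall that motivic cohomology is, by definition, $H^{2p-n,p}(X,\MZ)=\mathbb{H}^{2p-n}_{Nis}(X,\MZ(p))$, the hypercohomology of the motivic complex, which I would present in the Suslin--Friedlander form built from cycles equidimensional over affine space, $\MZ(p)\simeq C_\bullet z_{\mathrm{equi}}(\mathbb{A}^p,0)[-2p]$, where $C_\bullet\mathcal{F}(U)=\mathcal{F}(U\times\Delta^\bullet)$ is the Suslin complex. On the other side, $U\mapsto z^p(U,2p-\bullet)$ is a complex of presheaves on $X$ whose complex of global sections computes Bloch's higher Chow groups $CH^p(X,\bullet)$. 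The theorem then splits into three assertions: (i) the presheaf of complexes $U\mapsto z^p(U,\bullet)$ satisfies descent, so its hypercohomology is computed by global sections; (ii) Bloch's complex of admissible cycles is quasi-isomorphic to the equidimensional cycle complex $C_\bullet z_{\mathrm{equi}}(\mathbb{A}^p,0)$; and (iii) this equidimensional cycle complex is quasi-isomorphic, up to a shift, to the motivic complex $\MZ(p)$.

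For (i) the essential input I would invoke is Bloch's localization theorem: for a closed immersion $Z\hookrightarrow U$ of smooth $\mathbbm{k}$-varieties of codimension $c$, with open complement $V$, there is a localization distinguished triangle relating $z^{p-c}(Z,\bullet)$, $z^p(U,\bullet)$ and $z^p(V,\bullet)$. Together with homotopy invariance $z^p(U\times\mathbb{A}^1,\bullet)\simeq z^p(U,\bullet)$, this shows that $U\mapsto z^p(U,\bullet)$ sends Mayer--Vietoris (and, more generally, Nisnevich elementary) squares to homotopy cartesian squares, so the canonical map from the complex of global sections to Zariski hypercohomology is a quasi-isomorphism; hence $\mathbb{H}^{2p-n}_{Zar}(X,z^p(-,2p-\bullet))\cong CH^p(X,n)$. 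Here I would also invoke Levine's quasi-isomorphism between the cubical cycle complex of the previous section and Bloch's original simplicial version, so that the two descriptions agree.

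For the comparison with $\MZ(p)$ I would argue in two steps. Step (ii) identifies the complex of admissible codimension-$p$ cycles in $X\times\Delta^\bullet$ with the complex $z_{\mathrm{equi}}(\mathbb{A}^p,0)(X\times\Delta^\bullet)$ of relative-dimension-zero equidimensional cycles over $\mathbb{A}^p$; this is a moving-lemma argument comparing two subcomplexes inside a common, larger complex of cycles, proved for $X$ smooth affine and then globalized by localization. Step (iii) is the Friedlander--Voevodsky duality theorem: one establishes a duality in $DM^{\mathrm{eff}}_-(\mathbbm{k})$ between the motive of a smooth variety and its compactly supported counterpart, which, combined with the cancellation isomorphism, identifies $C_\bullet z_{\mathrm{equi}}(\mathbb{A}^p,0)$ up to a shift with $\MZ(p)$ in its standard presentation $C_\bullet\MZ_{tr}(\mathbb{G}_m^{\wedge p})[-p]$.

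The main obstacle --- and the reason for the hypothesis that $\mathbbm{k}$ admit resolution of singularities --- is step (iii): the duality statement, and the moving lemmas underpinning both it and step (ii), use resolution of singularities in an essential way (to compactify, to resolve, and to reduce the general smooth case to affine spaces). Once (i)--(iii) are in hand the assembly is formal: (ii) and (iii) provide a quasi-isomorphism of complexes of Nisnevich sheaves on $X$ from $z^p(-,2p-\bullet)$ to $\MZ(p)$, and (i) then yields $H^{2p-n,p}(X,\MZ)=\mathbb{H}^{2p-n}_{Nis}(X,\MZ(p))\cong H^{2p-n}(z^p(X,2p-\bullet))=CH^p(X,n)$.
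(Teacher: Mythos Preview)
The paper does not contain a proof of this statement: it is quoted as a background result with the citation \cite{FrieVoe99} and no argument is given. So there is nothing in the paper to compare your proposal against.

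That said, your outline is a reasonable reconstruction of the strategy in the cited literature. The three pillars you isolate --- Zariski/Nisnevich descent for the cycle complex via Bloch's localization sequence and homotopy invariance, the Suslin comparison between admissible and equidimensional cycle complexes, and the Friedlander--Voevodsky duality identifying the equidimensional complex with $\MZ(p)$ --- are indeed the ingredients assembled in \cite{FrieVoe99} (together with the companion papers in that volume). Your remark that resolution of singularities enters through the duality and moving-lemma arguments is also accurate; this is precisely the hypothesis later removed in \cite{Voe}. One small point: in step~(i) you should be a bit careful to note that localization for Bloch's complex, in the generality you need, is itself a nontrivial input (the moving lemma of \cite{bloch:1994} and its refinements), not a formal consequence of the definitions.
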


The higher Chow groups satisfy several formal properties as expected of motivic cohomology: Localization, the homotopy axiom, they admit products for smooth varieties, and there is a regulator map to Deligne -- Beilinson cohomology, which we introduce in the next section. We also mention the following comparison theorem:

\begin{thm}[\cite{levine:1994},\cite{bloch-ss}]
 Let $X$ be a smooth, quasiprojective variety of dimension $d$ over a field $\mathbbm{k}$. Let further $gr_\gamma^q K_n(X)$ be the $q^{th}$ piece of the weight filtration of Quillen's $K$--theory of $X$. Then $$gr_\gamma^q K_n(X)\otimes \mathbbm{Z}\left[\frac{1}{(n+d-1)!}\right] \cong CH^q(X,n)\otimes \mathbbm{Z}\left[\frac{1}{(n+d-1)!}\right].$$ More generally, there is a spectral sequence $$CH^{-q}(X,-p-q) \Rightarrow K_{-p-q}(X)$$ for an equidimensional scheme $X$ over $\mathbbm{k}$ abutting to $K$--theory and inducing the above isomorphism after tensoring with $\mathbbm{Q}$.
\end{thm}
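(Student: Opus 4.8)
The plan is not to prove the isomorphism directly but to exhibit both sides as the $E_2$-page and the abutment of one and the same spectral sequence, the motivic (or Bloch) spectral sequence. First I would, using Levine's quasi-isomorphism between the cubical complex $Z^q(X,\bullet)$ of the previous section and Bloch's original simplicial cycle complex, form the \emph{homotopy coniveau tower}: filter the spectrum $n\mapsto K(X\times\Delta^n)$ by the codimension of the supports occurring in each $K$-theory class, obtaining a tower whose $r$-th graded piece is, by Quillen's localization theorem for $K$-theory together with Bloch's moving lemma (which guarantees that cycles and the resolutions computing their $K$-theory can be moved into good position relative to the coniveau stratification of $X\times\Delta^\bullet$), an Eilenberg--MacLane spectrum on the higher Chow complex in codimension $r$. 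The exact couple attached to this tower yields $E_2^{p,q}=CH^{-q}(X,-p-q)\Rightarrow K_{-p-q}(X)$. The delicate point here is convergence for non-affine $X$: one bounds the length of the filtration by $\dim X=d$ and reduces, via the localization and Mayer--Vietoris properties of higher Chow groups, to the affine case treated by Bloch and ultimately to the case of a field treated by Bloch--Lichtenbaum.

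For the rational statement I would then tensor the whole tower with $\MQ$ and let the Adams operations $\psi^k$ act. They act compatibly on the tower, and on $E_2^{p,q}\otimes\MQ=CH^{-q}(X,-p-q)\otimes\MQ$ by a single scalar depending only on the weight, namely $k^{-q}$ — a codimension $q$ cycle in $X\times\Box^n$ has motivic weight $q$, which is Soul\'e's computation of the $\psi^k$-action on higher Chow groups. Since each differential $d_r$ commutes with every $\psi^k$ but shifts the weight, and eigenspaces for distinct eigenvalues cannot map to one another, the spectral sequence degenerates at $E_2$ after $\otimes\MQ$. This gives a finite filtration on $K_n(X)\otimes\MQ$ with graded pieces $CH^q(X,n)\otimes\MQ$; since the $\psi^k$-eigenspace decomposition of $K_n(X)\otimes\MQ$ refines any filtration with $\psi^k$-homogeneous graded pieces, the filtration splits canonically, and as the $\gamma$-(= weight) filtration is characterized by exactly these eigenvalues one concludes $gr_\gamma^q K_n(X)\otimes\MQ\cong CH^q(X,n)\otimes\MQ$.

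The integral refinement with the denominator $\frac{1}{(n+d-1)!}$ is obtained by bookkeeping which primes must be inverted for the two previous steps to hold over $\MZ$. There are two sources: the higher differentials $d_r$, which vanish once the $\psi^k$-eigenspace decomposition is available integrally, i.e. after inverting small primes; and the Adams eigenprojectors onto the $k^{-q}$-eigenspace, which — expressed through Newton's identities in terms of the $\gamma$-operations — are polynomials with denominator dividing $q!$. Here $q$ may be taken $\le n+d-1$ for dimension reasons, since cycles live in $X\times\Box^n$ and only weights in that range occur. Collecting all primes $p\le n+d-1$ produces exactly $\frac{1}{(n+d-1)!}$, and over $\MZ[\frac{1}{(n+d-1)!}]$ the spectral sequence degenerates and the resulting filtration splits, giving the stated isomorphism.

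The main obstacle is the first step: constructing the spectral sequence with the correct $E_2$-page and establishing its convergence for arbitrary smooth quasi-projective $X$. This rests on Bloch's moving lemma and the localization theorem for higher Chow groups, both substantial, and it was historically settled only in stages (affine schemes by Bloch, fields by Bloch--Lichtenbaum, the general case by Friedlander--Suslin and by Levine). We therefore do not reprove it but cite \cite{levine:1994} and \cite{bloch-ss}; for the present article we need only its rational, zero-dimensional shadow $CH^2(F,3)\otimes\MQ\cong(K_3(F)\otimes\MQ)^{(2)}$ for number fields $F$.
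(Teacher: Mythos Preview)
The paper does not give a proof of this theorem at all: it is stated as a background result and attributed entirely to \cite{levine:1994} and \cite{bloch-ss}, with no argument supplied. So there is nothing in the paper to compare your proposal against.

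Your sketch is a faithful high-level summary of how those references actually establish the result --- the homotopy coniveau/support filtration on $K(X\times\Delta^\bullet)$ giving the spectral sequence, the Adams-operation argument for rational degeneration, and the bookkeeping of denominators for the integral statement. In that sense you have correctly identified the content behind the citation. One small caveat: the bound $q\le n+d-1$ you invoke for the denominator is a bit loose as stated (cycles live in codimension $\le \dim(X\times\Box^n)=n+d$, and the precise constant in the literature varies with conventions), but this does not affect the logic of the argument, only the exact shape of the factorial. For the purposes of this paper, which only uses the rational consequence $CH^2(F,3)\otimes\MQ\cong gr_\gamma^2 K_3(F)\otimes\MQ$ for number fields, your final remark is exactly right: the full strength of the theorem is not needed here.
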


Thus, we can transfer the information from Bloch's higher Chow groups to information about the algebraic $K$--groups, which are in general almost impossible to describe explicitly.

\begin{conj}
 If $\mathbbm{k}$ satisfies the rank conjecture of Suslin, e.g. if $\mathbbm{k}=F$ is a number field, the cubical higher Chow groups $CH^p(F,p+q)$, $q\geq 0$, are generated by fractional linear cycles.
\end{conj}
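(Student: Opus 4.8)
Since this is a conjecture and not a theorem, what follows is the line of attack I would pursue and the point at which it stalls. The plan is to reduce the assertion — field by field $F$, weight by weight $p$, and in each motivic degree $p+q$ — to two sub-tasks: writing down explicitly a supply of fractional linear cycles, and proving that they generate, the latter split into a rational rank count and an integral torsion computation. The reduction rests on the comparison $CH^p(F,p+q)\cong H^{p-q,p}(F,\MZ)$ of \cite{FrieVoe99} together with the spectral sequence relating higher Chow groups to $K$--theory: over a number field, Borel's theorem fixes $\dim_\MQ(K_n(F)\otimes\MQ)$ and hence the rational rank of each $CH^p(F,p+q)$, while Suslin's rank conjecture is exactly the input needed to keep the homology of $GL_n(F)$, and thereby the weight structure of $K$--theory, under control in the stable range — which is why it figures as a hypothesis.

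\textbf{Step 1 (building the cycles).} For $q=0$ one has $CH^p(F,p)\cong K_p^M(F)$, generated by Milnor symbols $\{a_1,\dots,a_p\}$, and these are represented by products of fractional linear cycles (the explicit symbol cycles of Totaro, resp. Nesterenko--Suslin), so this case is essentially known. For $q\ge 1$ and small $p$ the relevant motivic cohomology is expected to be governed by polylogarithmic data, so I would generalize the Bloch--\Kriz map $\rho_2:\MZ[\Fs]\rightarrow Z^2(F,3)$ to homomorphisms out of Goncharov's polylogarithm groups $\mathcal{B}_m(F)$ into $Z^p(F,p+q)$, again via fractional linear parametrizations, and then verify that the images are admissible and that the functional equations defining $\mathcal{B}_m(F)$ are realized by explicit boundaries in the Bloch complex. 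For $(p,q)=(2,1)$ this is exactly the content of \cite{HGaSM99} rationally and of the present article integrally, which already shows how intricate the cycle bookkeeping becomes in the first genuinely nontrivial case.

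\textbf{Step 2 (generation and torsion).} Next I would feed the cycles produced in Step 1 into the regulator, i.e. the generalized Abel--Jacobi map to Deligne--Beilinson cohomology (via \cite{KLM04}, \cite{HGaSM99}): computing their images explicitly, exhibiting enough of them to be linearly independent, and matching the resulting rank with Borel's, which yields surjectivity after $\otimes\MQ$. For the integral statement I would then invoke the known order of $CH^p(F,p+q)_{\mathrm{tors}}$ (e.g.\ from \cite{Wei}, via Quillen--Lichtenbaum) and write down explicit torsion fractional linear cycles of the correct order whose non-triviality is again certified by the regulator — precisely the mechanism this paper runs for $CH^2(F,3)$ of several number fields.

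\textbf{Main obstacle.} The hard part is twofold and, as matters stand, insurmountable in general. First, for $p\ge 3$ there are no unconditional explicit generators of $H^{p-q,p}(F,\MZ)$ to begin with: the natural candidates come from the Zagier--Goncharov program on polylogarithms, but surjectivity of the polylogarithm maps onto motivic cohomology is itself open, so there is as yet nothing to which a fractional linear realization could be applied. Second, even where candidate generators do exist (notably $p=2$), proving that the map $\overline{\rho}_2$ is \emph{surjective} — rather than hitting only a subgroup of the right rank — is not known; one currently controls its image only up to torsion after tensoring with $\MQ$. Thus a complete proof is out of reach with present techniques, and the conjecture is best regarded as supported by the weight-two case, whose integral refinement is the subject of this article, rather than as something provable outright.
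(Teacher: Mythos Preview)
You correctly recognize that this is stated as a conjecture, not a theorem, and the paper makes no attempt to prove it. There is therefore nothing to compare your proposal against: the paper simply records the conjecture and follows it with a one-line remark that the cases $q=0,1$ are a theorem of Gerdes \cite{Ger}, which is what justifies searching for generators among fractional linear (in particular Totaro) cycles in the rest of the article.

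Your outline of a possible attack and your diagnosis of the obstacles are reasonable. One omission worth flagging: you treat $q=0$ as essentially known via the identification with Milnor $K$-theory, but then fold $q=1$ into the open range ``$q\ge 1$ and small $p$''. In fact Gerdes' result already covers $q=1$ unconditionally, and since $CH^2(F,3)$ --- the main object of this paper --- has $p=2$, $q=1$, the conjecture is a theorem in precisely the case under study here. Your discussion would be sharper if it separated the known range $q\le 1$ (Gerdes) from the genuinely open range $q\ge 2$, where your remarks about the Zagier--Goncharov program and the lack of surjectivity results are on point.
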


\begin{rem}
The latter conjecture is a theorem of Gerdes for $q=0,1$ \cite{Ger}. This is the crucial fact for the work of Gangl, M\"uller-Stach, Zhao and the present author, since the Totaro cycles are specific fractional linear cycles. Thus, it makes sense to look for generators of higher Chow groups among these.
 \end{rem}

\section{The Abel -- Jacobi map}
As we have seen, the higher Chow groups are in some way a refinement
of Quillen's algebraic $K$--groups. So it is natural to ask for a
refined regulator map from Bloch's higher Chow groups to some Bloch
-- Ogus cohomology theory. Such a regulator map to Deligne --
Beilinson cohomology was proposed by Bloch in \cite{bloch}. In this
section, we introduce Deligne cohomology
$H^\bullet_{\mathcal{D}}(X,\mathbbm{Z}(*))$ for analytic varieties
$X/\mathbbm{C}$ with associated analytic spaces $X^{an}$ which
suffices for our purposes and describe the generalized Abel --
Jacobi map of \cite{KLM04}:
\begin{equation}
\label{eq:AJ1}
\Phi_{p,n}: CH^p(X,n)\rightarrow H_{\mathcal{D}}^{2p-n}(X^{an},\mathbbm{Z}(p)).
\end{equation}

\begin{rem}
 We shall work in the analytic topology and write $\Omega_X^k$ for the sheaf of holomorphic $k$-forms on $X$. In contrast, we let $\Omega_{X}^{p,q}$ be the sheaf of $\mathcal{C}^\infty$-forms of type $(p,q)$ on the associated analytic variety $X^{an}$ and $\mathcal{D}_{X}^{p,q}$ be the sheaf of distributions on  $\mathcal{C}^\infty$-forms on $X^{an}$ of type $(m-p,m-q)$. If we are only interested in the total degree, we set $\Omega_{X}^k:=\oplus_{p+q=k}\Omega_{X}^{p,q}$ as well as $\mathcal{D}^k_X:=\oplus_{p+q=k} \mathcal{D}_X^{p,q}$. Finally, cohomology groups without subscript denote Betti cohomology groups.
\end{rem}

Deligne cohomology and its properties are nicely explained in full
generality in \cite{EV88} and \cite{Ja}. Here, we will restrict
ourselves to a very elementary exposition of the Deligne cone
complex and its homology.

\begin{rem}
In general, recall that if $\mu:A^\bullet \to B^\bullet$ is a
morphism of complexes, then the cone complex is given by
\begin{equation}
\label{eq:cone} Cone(A^\bullet \stackrel{\mu}{\longrightarrow}
B^\bullet) :=A^\bullet[1]\oplus B^\bullet
\end{equation}
with differential $\delta: A^{\bullet+1}\oplus B^\bullet\to
A^{\bullet+2}\oplus B^\bullet$ given by $\delta(a,b)=
(-da,\mu(a)+db)$.
\end{rem}

\begin{defn}
Let $X$ be a smooth projective variety over $\mathbbm{C}$ of
dimension $m$ and let $\mathbbm{A}\subseteq \mathbbm{R}$ be a
subring with $\mathbbm{A}(p):=(2\pi i)^p\mathbbm{A}\subseteq
\mathbbm{C}$. Let further $C^k(X^{an};\mathbbm{Z}(p))$ denote the
singular $\mathcal{C}^\infty$-chains of real codimension $k$ on
$X^{an}$ with coefficients in $\mathbbm{Z}(p)$, which comprise the
well-known complex $(C^\bullet(X^{an};\mathbbm{Z}(p)),d)$. Then
$$C^\bullet_{\mathcal{D}}(X^{an},\mathbbm{Z}(p)):= Cone\left\{
C^\bullet(X^{an};\mathbbm{Z}(p))\oplus F^p
\mathcal{D}^\bullet_X(X^{an})\stackrel{\epsilon-
l}{\longrightarrow}\mathcal{D}^\bullet_X(X^{an})\right\}[-1],$$
where the maps $\epsilon$ and $l$ are defined in \cite{Ja}.

For us Deligne cohomology
$H^\bullet_{\mathcal{D}}(X^{an},\mathbbm{Z}(p))$ just as the
homology of the above cone complex with respect to the differential
$\delta$.
\end{defn}

Our key application of the Deligne complex is checking whether
torsion cycles in motivic cohomology are nontrivial in Deligne
cohomology. In \cite{KLM04} Kerr, Lewis and M\"uller-Stach gave an
explicit description of a well-defined map of complexes
\begin{equation}
\label{eq:reg1} \tilde{Z}(X,-\bullet)\to
C^{2p+\bullet}_{\mathcal{D}}(X^{an},\mathbbm{Z}(p))
\end{equation}
from a quasi-isomorphic subcomplex of Bloch's higher Chow complex to Deligne cohomology complex which induces a generalized Abel -- Jacobi map from Bloch's higher Chow groups to Deligne cohomology as we shall now describe:

For any cone complex of the form \eqref{eq:cone} there is a natural long exact sequence of the form
\begin{align*}
 \ldots\to H^{*-1}(A^\bullet)&\stackrel{\beta}{\to} H^{*-1}(B^\bullet)\to H^*(Cone(\{A^\bullet\to B^\bullet\}[-1]))\to \\ &\to H^*(A^\bullet)\stackrel{\alpha}{\to} H^*(B^\bullet)\to\ldots,
\end{align*}
where in our case $ker(\alpha)=Hom_{MHS}(\mathbbm{Z}(0),H^*(X,\mathbbm{Z}(p)))$, which we may assume to vanish for $* < 2p$ since we may assume $H^*(X,\mathbbm{Z})$ torsion-free. Additionally, $coker(\beta)=Ext^1_{MHS}(\mathbbm{Z}(0),H^{*-1}(X,\mathbbm{Z}(p)))$. Thus, the map \eqref{eq:reg1} induces a map
\begin{equation}
 \label{eq:reg2}
CH^p(X,n)\to H^{2p-n}_{\mathcal{D}}(X^{an},\mathbbm{Z}(p))\cong Ext^1_{MHS}(\mathbbm{Z}(0),H^{2p-n-1}(X,\mathbbm{Z}(p))).
\end{equation}

Now, we shall make this map explicit. For this we need to recall currents on smooth varieties.

%Again, we have to introduce some notation: Let again $X$ be a quasiprojective variety of complex dimension $m$. For $Y\subset X$ as oriented analytical subset of real codimension $k$, we write $\Omega\in\Gamma(\Omega^\ell_X(\log D))$, where $D$ is any divisor, for a meromorphic differential $\ell$-form with at most logarithmic singularities along $D$.

%We let $\Omega^k_X$ be the sheaf of $C^\infty$ forms on $X$. For any divisor $D\subset X$, we write $\Omega\in\Gamma(\Omega^\ell_X(\log D))$ for a meromorphic differential $\ell$-form with at most logarithmic singularities along the divisor $D$.

\begin{defn}
 A $d$--current on the quasiprojective variety $X$ of complex dimension $m$ is a section of the sheaf $^\prime\mathcal{D}^d_X:=\mathcal{D}^{2m-d}_X$ of distributions on $\mathcal{C}^\infty$-forms on $X^{an}$.
\end{defn}

%\begin{defn}
% A $m$-current is a section of the sheaf $^\prime\mathcal{D}^m_X:=\mathcal{D}(\Omega^{2d-m}_X)$ of distributions on $\Omega^{2d-m}_X$.
%\end{defn}

We will associate to any meromorphic function $f\in \mathbbm{C}(X)$ the oriented $(2m-1)$-chain $T_f:=\overline{f^{-1}(\mathbbm{R}^-)}$ as in \cite[5.1]{KLM04}. The orientation is chosen so that $\partial T_f = (f) = (f)_0 - (f)_\infty$. We are interested in a certain current on $\Box^n$: Let %$$\Omega^n=\Omega(z_1,\ldots,z_n):=d\log(z_1)\land\ldots\land d\log(z_n)\in F^n\left(^\prime \mathcal{D}^n_{\Box^n_F}\right)$$ be a holomorphic $n$--current and
\begin{align*}
  T^n & := T_{z_1}\cap\ldots\cap T_{z_n}\\
\intertext{be a topological $n$--chain and write} R^n :=
R(z_1,\ldots,z_n) & := \log(z_1)d\log(z_2)\land\ldots\land
d\log(z_n) \\ & \qquad +(-1)^{n-1}\left(2\pi
i\log(z_2)d\log(z_3)\land\ldots\land
d\log(z_n)\cdot\delta_{T_{z_1}}+ \ldots \right. \\ &\qquad\left.
\ldots+(2\pi i)^{n-1}\log(z_n)\cdot\delta_{T_{z_1}\cap\ldots\cap
T_{z_{n-1}}}\right)\in\text{ } ^\prime\mathcal{D}^{n-1}_{\Box^n_F}.
\end{align*}

\begin{rem}
Via pullback, one can pretend that $T^n, R^n$ be currents on $X\times\Box^n_F$. But then one has to assure that the cycle class $\mathcal{Z}\in CH^p(X,n)$ is in real good position, i.e. intersects the faces of the real $n$-cube in an admissible way. The subgroup $\tilde{Z}^p(X,\bullet)\subset Z^p(X,\bullet)$ mentioned above is precisely the group generated by cycles in real good position.
\end{rem}

Then we can finally state the explicit form of \eqref{eq:reg2}:
\begin{prop}[\cite{KLM04}, Sect. 5.7]
The Abel -- Jacobi map $\Phi_{p,n}$ \eqref{eq:AJ1} for a cycle class
$[\mathcal{Z}]\in CH^p(\mathsf{Spec}(\mathbbm{C}),2p-1)$ is given
by: $$ \frac{1}{(-2\pi i)^{p-1}}\int\limits_{\mathcal{Z}}R^{2p-1}
\in
H^1_{\mathcal{D}}(\mathsf{Spec}(\mathbbm{C})^{an},\mathbbm{Z}(p))\cong
\mathbbm{C}/ \mathbbm{Z}(p).$$
\end{prop}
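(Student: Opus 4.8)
The statement is, as the attribution indicates, the specialization to $X=\mathsf{Spec}(\mathbbm{C})$ and $n=2p-1$ of the Kerr--Lewis--M\"uller-Stach map of complexes \eqref{eq:reg1}; so the plan is to carry out that specialization and to track which data survive. Recall that, by the cone construction, an element of $C^{2p-n}_{\mathcal{D}}(X^{an},\MZ(p))$ is a triple $(\Gamma,\Omega,\Theta)$ with $\Gamma$ a topological chain in $C^{\bullet}(X^{an};\MZ(p))$, $\Omega\in F^p\mathcal{D}^\bullet_X(X^{an})$ and $\Theta\in\mathcal{D}^\bullet_X(X^{an})$ of the appropriate total degrees, and that \cite[\S5]{KLM04} assigns to a cycle $\mathcal{Z}\in\tilde{Z}^p(X,n)$ in real good position such a triple, whose three components are obtained, after fibre integration along $X\times\Box^n\to X$, from the topological chain $T^n\cap\mathcal{Z}$, from the restriction of $\bigwedge_i d\log z_i$ to $\mathcal{Z}$, and from $\int_{\mathcal{Z}}R^n$, respectively. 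So the first step is to set $m=\dim_{\mathbbm{C}}X=0$ and $n=2p-1$, hence $2p-n=1$, and to identify the target: in this degree $H^0(\mathsf{Spec}(\mathbbm{C}),\MZ(p))=\MZ(p)$, $H^1(\mathsf{Spec}(\mathbbm{C}),-)=0$ and $F^pH^0(\mathsf{Spec}(\mathbbm{C}),\mathbbm{C})=0$ for $p\geq1$, so the long exact sequence preceding \eqref{eq:reg2} degenerates and \eqref{eq:reg2} becomes $CH^p(\mathsf{Spec}(\mathbbm{C}),2p-1)\to Ext^1_{MHS}(\MZ(0),\MZ(p))\cong\mathbbm{C}/\MZ(p)$, as asserted.

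The second step is to observe that for a point both the first and the second component of the triple vanish: a point carries neither chains of negative dimension nor currents of positive degree, so the degree-$1$ pieces $\Gamma$ and $\Omega$ are zero (equivalently, $\mathcal{Z}$ is supported over $\Box^{2p-1}$ and its partial-boundary data push forward to $\mathsf{Spec}(\mathbbm{C})$ in the wrong dimension, while $F^p$ of the Hodge structure of a point vanishes for $p\geq1$). Hence the class $\Phi_{p,2p-1}([\mathcal{Z}])\in\mathbbm{C}/\MZ(p)$ is represented purely by the current-integration term $\Theta$, namely, up to the normalization below, by $\int_{\mathcal{Z}}R^{2p-1}$; this integral converges precisely because $\mathcal{Z}\in\tilde{Z}^p(\mathsf{Spec}(\mathbbm{C}),2p-1)$ is in real good position, so the logarithmic singularities of $R^{2p-1}$ are integrable along $\mathcal{Z}$ and the $\delta$-current summands of $R^{2p-1}$ meet $\mathcal{Z}$ properly.

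The third step is to pin down the constant and to check that the assignment descends to $CH^p$. Degenerate cycles (pullbacks of facets) integrate to zero, since a cycle that is constant in one coordinate direction kills, for degree reasons, the restriction of the $(2p-2)$-form $R^{2p-1}$, which is built from all $2p-1$ coordinates; so the map factors through $Z^p$. If $\mathcal{Z}-\mathcal{Z}'=\partial_B w$ with $w\in\tilde{Z}^p(\mathsf{Spec}(\mathbbm{C}),2p)$, then Stokes' theorem on $w$ together with the differential equation satisfied by the $R$-currents on $\Box^{2p}$ — which expresses $dR^{2p}$ through the face restrictions of $R^{2p-1}$ and a term proportional to $(2\pi i)^{2p}\delta_{T^{2p}}$ — shows that $\int_{\mathcal{Z}}R^{2p-1}-\int_{\mathcal{Z}'}R^{2p-1}$ differs only by periods of products of $d\log$-forms against integral cycles, hence by an element of $(2\pi i)^{2p-1}\MZ$; the same bound governs the ambiguity caused by the branch conventions built into $R^{2p-1}$. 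Dividing by $(-2\pi i)^{p-1}$ turns $(2\pi i)^{2p-1}\MZ$ into $(2\pi i)^p\MZ=\MZ(p)$, which both forces the factor $\tfrac{1}{(-2\pi i)^{p-1}}$ (the sign $(-1)^{p-1}$ being a convention fixed in \cite{KLM04}) and is compatible with the Tate twist $\MZ(p)=(2\pi i)^p\MZ$ used in \eqref{eq:reg2}.

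The main obstacle I expect is exactly this last bookkeeping: matching the signs of the cone differential $\delta$ against Bloch's boundary $\partial_B$ and controlling how the nested $\delta$-current summands of $R^n$ interact with the faces $z_i\in\{0,\infty\}$, so that all the ``extra'' terms produced by Stokes assemble into the lattice $\MZ(p)$ rather than an arbitrary complex number. Granting the map of complexes \eqref{eq:reg1} from \cite{KLM04}, however, everything else reduces to the dimension count above together with the elementary fact that periods of products of $d\log$-forms over integral cycles are integral multiples of the corresponding power of $2\pi i$.
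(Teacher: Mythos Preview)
The paper does not give a proof of this proposition at all: it is stated with the attribution ``\cite{KLM04}, Sect.~5.7'' and immediately followed by an example, so there is nothing to compare your argument against in the paper itself. Your sketch is a reasonable outline of how the general KLM map of complexes specializes to a point in degree $2p-1$: the identification of the target via the long exact sequence, the vanishing of the chain and $F^p$-components over $\mathsf{Spec}(\mathbbm{C})$ for dimension reasons, and the reduction of the well-definedness to a Stokes/period computation are all the right ingredients. The only caveat is that the delicate part you flag yourself --- matching the cone differential against $\partial_B$ and showing that all boundary terms land in $(2\pi i)^{2p-1}\MZ$ --- is precisely what \cite{KLM04} does in detail, so your argument ultimately rests on that reference just as the paper's bare citation does; you have simply unpacked the specialization rather than black-boxing it.
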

\begin{ex}
 Consider a cycle class $[\mathcal{Z}]\in CH^2(\mathsf{Spec}(\mathbbm{C}),3)$ intersecting the real $3$--cube in an admissible way. Then we have
\begin{equation}
\label{eq:regk3}
-\Phi_{2,3}[\mathcal{Z}] = \int\limits_{\mathcal{Z}\cap T_{z_1}}\hspace{-0.5 em}\log(z_2) d\log(z_3) + 2\pi i\hspace{-1 em}\sum_{p\in \mathcal{Z}\cap T_{z_1}\cap T_{z_2}}\hspace{-1 em}\log(p)
\end{equation}
as image of the cycle class. \hfill{$\blacklozenge$}
\end{ex}

\section{The setup}
The aim of this article is the determination of explicit generators for higher Chow groups of number fields $F$. This is done in the following way: We start with $X_0=\mathsf{Spec}(F)$ and prove relations corresponding to functional equations of the dilogarithm to find torsion cycles in $CH^p(X_0,2p-1)$. Then we build an analytic variety out of $X_0$ by using one of the $r=[F:\mathbbm{Q}]=r_1+2r_2$ complex embeddings $\sigma:F\hookrightarrow\mathbbm{C}$ to pull back torsion cycles on $CH^p(F,2p-1)$ along the induced map $\mathsf{Spec}(\mathbbm{C})\to \mathsf{Spec}(F)$ and apply the Abel -- Jacobi map from the previous section to check whether our cycles are indeed nontrivial: $$CH^p(F,2p-1)\stackrel{\sigma^*}{\to}CH^p(\mathbbm{C},2p-1)\stackrel{\Phi_{p,2p-1}}{\longrightarrow}H^1_{\mathcal{D}}(\mathsf{Spec}(\mathbbm{C})^{an},\mathbbm{Z}(p))\cong\mathbbm{C}/\mathbbm{Z}(p).$$

Let us introduce some notation:
\begin{defn}
In general, given a map $\phi:(\mathbbm{P}_F^1)^n\rightarrow (\mathbbm{P}_F^1)^m$ for some $n,m\in\mathbbm{N}$, let $Z_\phi$ be the cycle $\phi_*((\mathbbm{P}_F^1)^n)\cap\Box^m$ associated to $\phi$ in the sense of \cite[sect. 1.4]{Ful}. Then let $x=(x_1,\ldots,x_n)$ and $$\left[\phi_1(x),\ldots,\phi_m(x) \right]:=Z_{(\phi_1(x),\ldots,\phi_m(x))}.$$
\end{defn}

In this paper we will be concerned with codimension two Chow groups where there are certain fractional linear algebraic cycles $C_a, a\in\Fs$, available in $Z^2(F,3)$, namely the images of $a\in \Fs$ under the map $\rho_2:\MZ[\Fs]\rightarrow Z^2(F,3)$ which are likely to provide candidates (by remark 2.5) of generators of the whole Chow group of a number field: For some $a\in\Fs$ let
 $$C_a:=Z_{(1-\frac{a}{x},1-x,x)}=\left[1-\frac{a}{x},1-x,x\right]$$ be a so-called Totaro cycle resp. Totaro curve \cite{totaro:1992}. Note that $\partial C_a=(1-a,a)\neq 0\in Z^2(F,2)$. This means that except for $a=1$ there is no closed Totaro cycle, but we will see that combinations of them are closed indeed in $Z^2(F,3)$. Note also that for some integers $n_i$ and $\mathcal{Z}:= \sum_in_iC_{a_i}\in CH^2(F,3)$ one obtains by direct computation of \eqref{eq:regk3} $$-\Phi_{2,3}[\mathcal{Z}]=\sum_in_iLi_2(a_i)\in H^1_\mathcal{D}(\mathsf{Spec}(\mathbbm{C})^{an},\mathbbm{Z}(2)).$$

\begin{rem}
In the rest of the article one especially has to care about the admissibility condition mentioned in the survey on higher Chow groups. Totaro cycles are known to be admissible. Moreover, a cycle $\mathcal{Z}=[f(x),g(x),h(x)]$ is admissible if and only if the following holds: Every zero or pole of one of the rational functions which is also a zero or pole of another one, has to be contained in the preimage of $1$ of the third function.

In the rest of this paper we have either basically used analogous cycles as Gangl and M\"uller-Stach did, assumed their admissibility or checked the admissibility of all cycles occurring. So we shall not stress this point any more.
\end{rem}

To simplify our computations in the quotient $Z^2(F,3) / \partial Z^2(F,4)$, we divide out an acyclic subcomplex of $Z^2(F,\bullet)$ consisting of cycles with a constant coordinate on the left-hand side
\begin{lem}
 The following subcomplex of $Z^2(F,\bullet)$ is acyclic:
\begin{align*}
&Z'(F,\bullet):= \\
&\ldots\rightarrow Z^1(F,1)\otimes Z^1(F,3)\rightarrow Z^1(F,1)\otimes Z^1(F,2) \rightarrow Z^1(F,1)\otimes \partial Z^1(F,2) \rightarrow 0
\end{align*}
 %resp.
%$$Z''(F,\bullet):= \ldots\rightarrow Z^1(F,3)\otimes Z^1(F,1)\rightarrow Z^1(F,2)\otimes Z^1(F,1) \rightarrow \partial Z^1(F,2)\otimes Z^1(F,1) \rightarrow 0.$$
\end{lem}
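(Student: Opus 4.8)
The plan is to recognise $Z'(F,\bullet)$ as the tensor product of $Z^1(F,1)$ with a truncated codimension-one cubical complex, to show that this truncated complex is acyclic, and then to transfer acyclicity across the tensor product by exploiting that its terms are free abelian groups.

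First I would make the identification behind the statement explicit. Sending an irreducible cycle $\{a\}\in Z^1(F,1)$ (a closed point of $\Box^1_F$ away from $0$ and $\infty$) together with a codimension-one cycle $W$ on $\Box^{n-1}_F$ to the external product $\{a\}\times W\subseteq\Box^n_F$ embeds $Z^1(F,1)\otimes Z^1(F,n-1)$ into $Z^2(F,n)$ as the cycles with a constant left coordinate: $\{a\}\times W$ is admissible and nondegenerate exactly when $W$ is (the faces $z_1\in\{0,\infty\}$ are met trivially), and Bloch's boundary acts on it by $-\operatorname{id}\otimes\partial_B$, since only the coordinates $z_2,\dots,z_n$ contribute. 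Hence, up to sign, $Z'(F,\bullet)=Z^1(F,1)\otimes C_\bullet$ where $C_\bullet$ is the complex
$$\cdots\longrightarrow Z^1(F,3)\longrightarrow Z^1(F,2)\stackrel{\partial_B}{\longrightarrow}\partial Z^1(F,2)\longrightarrow 0,$$
obtained from the codimension-one cubical complex $Z^1(F,\bullet)$ by replacing its two bottom terms $Z^1(F,1)$ and $Z^1(F,0)$ by $\partial Z^1(F,2):=\operatorname{im}(\partial_B\colon Z^1(F,2)\to Z^1(F,1))$ and $0$.

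Next I would prove that $C_\bullet$ is acyclic. By construction the bottom term is the image of $\partial_B$, so $C_\bullet$ is exact there; in the next degree, since the map $Z^1(F,2)\to\partial Z^1(F,2)$ is $\partial_B$ with restricted target, the homology is $\ker(\partial_B\colon Z^1(F,2)\to Z^1(F,1))/\operatorname{im}(\partial_B\colon Z^1(F,3)\to Z^1(F,2))=CH^1(F,2)$; and in each higher degree $C_\bullet$ reproduces $CH^1(F,n)$ with $n\geq 3$. Now $CH^1(\mathsf{Spec}(F),n)=0$ for every $n\geq 2$: via the comparison isomorphism $CH^1(X,n)\cong H^{2-n,1}(X,\MZ)$ (valid since $F$ has characteristic zero) this says that the codimension-one motivic cohomology of a point is concentrated in degree $1$, where it is $\Fs$. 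Hence $C_\bullet$ is exact. The role of the truncation is precisely to discard the only nontrivial homology group of $Z^1(F,\bullet)$, namely $CH^1(F,1)\cong\Fs$, while the exactness of $Z^1(F,\bullet)$ in all remaining degrees prevents any new homology from appearing.

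Finally, each $Z^1(F,n)$ is a free abelian group (a $\MZ$-basis is given by the irreducible admissible nondegenerate codimension-one cycles), and $\partial Z^1(F,2)$, being a subgroup of the free abelian group $Z^1(F,1)$, is free as well. Thus $C_\bullet$ is a bounded-below acyclic complex of free $\MZ$-modules, hence split exact, hence contractible; applying $Z^1(F,1)\otimes-$ to a contracting homotopy for $C_\bullet$ produces one for $Z^1(F,1)\otimes C_\bullet=Z'(F,\bullet)$, which is therefore acyclic. I expect this last step to be the only substantive one, since acyclicity is not preserved by $\otimes_\MZ$ in general and one genuinely needs the freeness (equivalently $\MZ$-flatness) of the terms of $C_\bullet$; the remaining verifications -- the bookkeeping for the cubical boundary and admissibility, and the vanishing of $CH^1$ of a field -- are routine.
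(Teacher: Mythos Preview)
Your argument is correct and follows the same overall architecture as the paper's proof: identify $Z'(F,\bullet)$ as $Z^1(F,1)\otimes C_\bullet$ for a truncation $C_\bullet$ of the codimension-one cubical complex, prove $C_\bullet$ acyclic, and then pass the acyclicity through the tensor product.

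The difference lies in how acyclicity of $C_\bullet$ is obtained and transported. The paper invokes Nart's result, which supplies an \emph{explicit} contracting homotopy for the truncated codimension-one complex; this makes the tensoring step automatic, since a contracting homotopy tensors to a contracting homotopy without any flatness hypothesis. You instead deduce exactness of $C_\bullet$ from the vanishing $CH^1(F,n)=0$ for $n\geq 2$ (equivalently, the fact that $\MZ(1)\simeq\mathbb{G}_m[-1]$ has cohomology concentrated in a single degree over a field), and then recover a contracting homotopy abstractly from the freeness of the terms. Your route avoids the external reference at the cost of one extra step (freeness $\Rightarrow$ split exact $\Rightarrow$ contractible); the paper's route is shorter but less self-contained. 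Both are valid, and you are right to flag that the freeness of the terms is the genuine content needed to make the tensoring step go through---the paper's one-line ``acyclicity is not changed by tensoring'' implicitly relies on having an actual homotopy in hand.
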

\begin{proof}
First, one checks that this is a complex. This follows at once as it
is a truncated version of $Z^1(F,\bullet)$ tensored with $Z^1(F,1)$,
which does not change homology.

The acyclicity of this subcomplex is essentially the acyclicity
result of Nart \cite{nart:1995} who explicitly constructs a
contracting homotopy. His proof carries over literally to our
integral setting. Again the acyclicity is not changed by tensoring
the whole subcomplex by $Z^1(F,1)$.
\end{proof}
\begin{defn}
 We set $C^2(F,\bullet):= Z^2(F,\bullet) / Z'(F,\bullet)$.
\end{defn}

\begin{rem}
Note that the Abel -- Jacobi map is identically zero on terms
contained in $Z'(F,\bullet)$: For $\mathcal{Z}:=[c,g(t),h(t)]\in
Z'(F,3)$ with a constant $c$ such that $\sigma(c)\notin
\overline{\mathbbm{R}^-}$, we have $|\mathcal{Z}|\cap T_c=\emptyset$
which implies $\Phi_{2,3}(\mathcal{Z})=0$. By abuse of notation,
$\Phi_{2,3}$ here denotes the map of complexes inducing the Abel --
Jacobi map in the Chow groups. So $$\hspace{-0.5
em}Z^2(F,3)\stackrel{\sigma^*}{\longrightarrow}Z^2(\mathbbm{C},3)\stackrel{-\Phi_{2,3}}{\longrightarrow}
\mathbbm{C}/\mathbbm{Z}(2)$$ induces a well-defined map
$$\hspace{-0.5 em}K^{ind}_3(F)\cong CH^2(F,3)\to
\mathbbm{C}/\mathbbm{Z}(2),$$ where we used the (nonstandard)
identification
\begin{align}
\label{eq:well}
 CH^2(F,3)\cong &H_3(C^2(F,\bullet),\partial_B)\cong \notag\\
\cong &\ker\left\{\frac{Z^2(F,3)}{\partial Z^2(F,4)+Z^1(F,1)\otimes Z^1(F,2)}\to \frac{Z^2(F,2)}{Z^1(F,1)\otimes \partial Z^1(F,2)}\right\}.
\end{align}
Thus, $-\Phi_{2,3}\circ\sigma^*$ is well-defined on \eqref{eq:well}
and it suffices to construct higher Chow cycles in \eqref{eq:well}
rather than -- as usual -- in $\ker\{\frac{Z^2(F,3)}{\partial
Z^2(F,4)}\to Z^2(F,2) \}$. Also note that this reasoning is not
possible in Gangl's and M\"uller-Stach's setting since the map of
complexes inducing the Abel -- Jacobi map is not identically zero on
their (alternating) subcomplex $\subset
Z^2(F,\bullet)\otimes\mathbbm{Q}$. This in turn implies that it is
not well-defined on their quotient. Here, we quotient by a smaller
acyclic subcomplex and do not loose any information.
\end{rem}

\begin{prop}
\label{prop:rules1}
 Let $f, g, h_1, h_2$ be rational functions of one variable $x$ such that all cycles occurring are admissible. Then the following identities hold in \quotient:\setroem
\begin{small}
\begin{align}
&\left[h_1(x)h_2(x),f(x),g(x)\right] = \left[h_1(x),f(x),g(x)\right]+\left[h_2(x),f(x),g(x)\right] \\
 &-\hspace{-0.8 em}\sum_{x_0\in div(f)}\hspace{-0.5 em}\pm\left[\frac{z-h_1(x_0)h_2(x_0)}{z-h_1(x_0)},z,g(x_0)\right]+\hspace{-0.8 em}\sum_{x_0\in div(g)}\hspace{-0.5 em}\pm\left[\frac{z-h_1(x_0)h_2(x_0)}{z-h_1(x_0)},z,f(x_0)\right], \notag\\
&\left[f(x),h_1(x)h_2(x),g(x)\right] = \left[f(x),h_1(x),g(x)\right]+\left[f(x),h_2(x),g(x)\right] \\
 &+\hspace{-0.8 em}\sum_{x_0\in div(f)}\hspace{-0.5 em}\pm\left[\frac{z-h_1(x_0)h_2(x_0)}{z-h_1(x_0)},z,g(x_0)\right], \notag\\
&\left[f(x),g(x),h_1(x)h_2(x)\right] = \left[f(x),g(x),h_1(x)\right]+\left[f(x),g(x),h_2(x)\right].
\end{align}
\end{small}
\end{prop}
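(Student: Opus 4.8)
The strategy is to compute the Bloch boundary of a suitable auxiliary $4$-cycle and thereby exhibit the difference between the two sides of each identity as an element of $\partial C^2(F,4)$, i.e.\ as zero in the quotient \quotient. For identity (iii), which is the cleanest, the natural candidate is a $4$-fold fractional linear cycle interpolating between $h_1h_2$ and $h_1,h_2$ in the last slot; concretely one takes something like $\bigl[f(x),g(x),\tfrac{1-t}{1-t/(h_1(x)h_2(x))}\cdot(\text{something}),t\bigr]$ — more precisely the cycle whose last two coordinates degenerate, at $t=0$ and $t=\infty$, to $h_1h_2$ on one side and to $h_1$ together with $h_2$ on the other, while the faces coming from $\mathrm{div}(f)$ and $\mathrm{div}(g)$ land in the acyclic subcomplex $Z'(F,\bullet)$ (a constant first coordinate) and hence vanish in $C^2(F,\bullet)$. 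This is exactly the multiplicativity-of-$\log$ trick used by Gangl and M\"uller-Stach rationally in \cite{HGaSM99}; the point here is that working modulo $\partial C^2(F,4)$ rather than modulo an alternating subcomplex, no denominators are introduced, so the identity holds integrally.

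For identities (i) and (ii) the same idea applies but the bookkeeping is heavier: moving $h_1h_2$ past the two other coordinates produces, via Bloch's boundary formula $\partial_B=\sum_i(-1)^{i-1}(\partial_i^0-\partial_i^\infty)$, correction terms supported on the divisors of $f$ and of $g$, and on those faces one coordinate specializes to a \emph{constant} $h_1(x_0)$, $h_2(x_0)$ or $h_1(x_0)h_2(x_0)$, giving precisely the cycles $\bigl[\tfrac{z-h_1(x_0)h_2(x_0)}{z-h_1(x_0)},z,g(x_0)\bigr]$ and their $f$-counterparts appearing on the right-hand sides. So the plan is: (1) write down the interpolating $4$-cycle explicitly; (2) verify it is admissible and in real good position under the standing hypothesis that all cycles occurring are admissible; (3) compute $\partial_B$ of it face by face, i.e.\ the eight specializations $z_i\in\{0,\infty\}$; (4) identify the "generic" faces as $\bigl[h_1h_2,f,g\bigr]-\bigl[h_1,f,g\bigr]-\bigl[h_2,f,g\bigr]$ and the "divisorial" faces as the sums over $\mathrm{div}(f)$, $\mathrm{div}(g)$; (5) discard the faces lying in $Z'(F,\bullet)$ and those that are degenerate; (6) collect signs. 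The asymmetry between (i), (ii) and (iii) — namely that (iii) has no correction terms, (ii) has only an $f$-term, and (i) has both an $f$- and a $g$-term — reflects the position of the multiplied coordinate: a zero or pole of the third coordinate forces, by the admissibility criterion of Remark~3.4, the other coordinates to take value $1$ there, killing the corresponding face, whereas this rigidity is only partial when the product sits in the first or second slot.

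The main obstacle I expect is step (2)–(3): ensuring that the interpolating $4$-cycle is genuinely admissible (its components meet all faces of all codimensions properly) and then controlling the combinatorics of the boundary, in particular getting the signs $\pm$ and the precise index $x_0$ ranging over $\mathrm{div}(f)$ versus $\mathrm{div}(g)$ correct. A secondary subtlety is that the "divisorial" faces are a priori elements of $Z^2(F,3)$ with a constant first coordinate only \emph{after} one checks that the remaining two coordinates are as claimed; some of these may themselves be degenerate or lie in $Z'(F,\bullet)$ and must be dropped, and one has to be careful that what survives is exactly the stated sum and nothing more. Once the explicit $4$-cycle is pinned down, the rest is a direct (if tedious) computation of $\partial_B$, entirely parallel to the rational computation in \cite{HGaSM99} but carried out without clearing denominators.
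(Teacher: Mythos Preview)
Your approach is the same as the paper's: each identity is obtained as the boundary of a single explicit cycle in $C^2(F,4)$, with the divisorial faces either surviving as the correction sums or dying in $Z'(F,\bullet)$. The paper writes the three auxiliary cycles down in one line: for (i) take $\bigl[\tfrac{z-h_1(x)h_2(x)}{z-h_1(x)},\,z,\,f(x),\,g(x)\bigr]$, for (ii) take $\bigl[f(x),\,\tfrac{z-h_1(x)h_2(x)}{z-h_1(x)},\,z,\,g(x)\bigr]$, and for (iii) take $\bigl[f(x),\,g(x),\,\tfrac{z-h_1(x)h_2(x)}{z-h_1(x)},\,z\bigr]$. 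Your interpolating expression with ``$(\text{something})$'' is vaguer than necessary; the single fractional linear factor $\tfrac{z-h_1h_2}{z-h_1}$ already does all the work, since its restrictions to $z=0$, $z=\infty$ and its own zero and pole loci produce exactly $h_2$, the degenerate value $1$, $h_1h_2$ and $h_1$ in the neighbouring slot.

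One correction to your heuristic: the asymmetry among (i), (ii), (iii) is \emph{not} a consequence of the admissibility criterion forcing coordinates to equal $1$. It is simply a matter of which divisorial faces have a constant \emph{left} coordinate and hence fall into $Z'(F,\bullet)$. In the cycle for (iii) both $\partial_1$ and $\partial_2$ specialize $x=x_0$ and leave a constant $f(x_0)$ or $g(x_0)$ in the first slot, so both die; in the cycle for (ii) the $g$-face leaves $f(x_0)$ on the left (dies) but the $f$-face leaves $\tfrac{z-h_1(x_0)h_2(x_0)}{z-h_1(x_0)}$ on the left (survives); in the cycle for (i) both survive. Once you have the explicit $4$-cycles above, steps (3)--(6) of your plan are a one-line computation and no separate admissibility analysis of the auxiliary cycle is needed beyond the blanket hypothesis in the statement.
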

\setarab
\begin{proof}
 Compute boundaries of elements of $Z^2(F,4)$: $\left[\frac{z-h_1(x)h_2(x)}{z-h_1(x)},z,f(x),g(x)\right]$, $\left[f(x),\frac{z-h_1(x)h_2(x)}{z-h_1(x)},z,g(x)\right]$, resp. $\left[f(x),g(x),\frac{z-h_1(x)h_2(x)}{z-h_1(x)},z\right]$, and keep in mind that terms with a constant in the left coordinate are contained in $Z'(F,\bullet)$.
\end{proof}

Before going on to proving something with these rules, let us examine the different types of terms in our complex $C^2(F,\bullet)$. Note also that there is another kind of term with a constant coordinate possible apart from the ones in the above proposition, namely $\left[f(x),c,g(x)\right]$ for rational functions $f, g$ and a constant $c$.
\begin{prop}
An admissible term of the form $\left[f(x),c,g(x)\right]\in C^2(F,3)$ for some M\"obius transformations $f, g$ and a constant $c\in\Fs$ can be expressed a sum of terms of the form $$Z(a,c) := \left[1-\frac{1-a}{x},c,1-x\right] = \left[\frac{x-a}{x-1},c,x\right]$$ and terms with a constant in the right coordinate.
\end{prop}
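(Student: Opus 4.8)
The plan is to first normalise the right-hand coordinate to $x$ by a change of parameter, and then to strip the M\"obius transformation off the left-hand coordinate by means of the multiplicativity rule of Proposition~\ref{prop:rules1}. If $g$ is constant there is nothing to do, since $\left[f(x),c,g(x)\right]$ then already has a constant in the right coordinate; if $f$ is constant, then $\left[f(x),c,g(x)\right]$ lies in $Z'(F,\bullet)$ and hence vanishes in $C^2(F,3)$. So assume both $f$ and $g$ are nonconstant M\"obius transformations. Then $g$ is an automorphism of $\mathbbm{P}^1_F$, and replacing the parameter $x$ by $g^{-1}(x)$ leaves the underlying cycle unchanged (an isomorphism of $\mathbbm{P}^1$ pushes $\mathbbm{P}^1$ forward to itself) while fixing the constant $c$; so we may assume the cycle has the shape $\left[\tilde f(x),c,x\right]$ with $\tilde f$ a M\"obius transformation, say with zero $a$ and pole $b$. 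Admissibility of the original cycle forces $a,b\notin\{0,\infty\}$, and also $c\neq 1$ (otherwise the cycle is empty).

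Next I would factor $\tilde f(x)=\mu\cdot\frac{x-a}{x-1}\cdot\frac{x-1}{x-b}$ and apply rule~(i) of Proposition~\ref{prop:rules1}, with middle coordinate the constant $c$ and right coordinate $x$. Because $div(c)=\emptyset$, the only correction terms come from $div(x)=\{0\}-\{\infty\}$, and since $c$ is evaluated at $0,\infty$ to the constant $c$, these corrections all have the shape $\left[\ast,z,c\right]$, i.e.\ terms with a constant in the right coordinate. The constant factor $\mu$ contributes a summand with constant left coordinate, which is zero in $C^2(F,3)$; the factor $\frac{x-a}{x-1}$ contributes the summand $\left[\frac{x-a}{x-1},c,x\right]=Z(a,c)$; and for $\frac{x-1}{x-b}$ one applies rule~(i) once more to the identity $\frac{x-1}{x-b}\cdot\frac{x-b}{x-1}=1$ and uses $\left[1,c,x\right]=0$ to get $\left[\frac{x-1}{x-b},c,x\right]=-Z(b,c)$ modulo constant-right terms. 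Assembling these,
$$\left[\tilde f(x),c,x\right]=Z(a,c)-Z(b,c)+(\text{terms with a constant in the right coordinate})$$
in \quotient, with the convention $Z(1,c):=\left[1,c,x\right]=0$ to absorb the degenerate cases $a=1$ or $b=1$.

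The step I expect to be the real work --- though not a conceptual obstacle --- is keeping track of the admissibility hypothesis of Proposition~\ref{prop:rules1}: each application of rule~(i) demands that \emph{every} cycle occurring (the two main summands, the degenerate ones, and the correction currents) be admissible. This is exactly where the constraints $a,b\notin\{0,\infty\}$, together with the resulting facts that $h_1(0),h_1(\infty),h_2(0),h_2(\infty)\notin\{0,\infty\}$ for the factorisations used, come in: a direct check against the admissibility criterion for cycles $\left[f(x),g(x),h(x)\right]$ recalled above shows that $Z(a,c)$, $Z(b,c)$, $\left[\frac{x-1}{x-b},c,x\right]$, and each correction term $\left[\frac{z-h_1(x_0)h_2(x_0)}{z-h_1(x_0)},z,c\right]$ with $x_0\in\{0,\infty\}$ is admissible. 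Finally one should note that, exactly as in Proposition~\ref{prop:rules1}, the claimed identity is to be read in \quotient, i.e.\ modulo $\partial C^2(F,4)$, rather than literally in $C^2(F,3)$.
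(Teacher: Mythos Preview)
Your proof is correct and follows essentially the same approach as the paper: reparametrize by $g^{-1}$ to normalise the right coordinate to $x$, then use rule~(i) of Proposition~\ref{prop:rules1} to decompose the left-hand M\"obius transformation into factors of the form $\frac{x-a}{x-1}$, recognising each as a $Z(a,c)$ and collecting the correction terms (which all carry the constant $c$ in the right coordinate since the middle function is constant). The paper phrases the factorisation as producing terms $\frac{a_i-x}{1-x}$ followed by a final reparametrization $x\mapsto 1-x$, whereas you choose the factors already in the shape $\frac{x-a}{x-1}$ and handle the pole factor via $\frac{x-1}{x-b}\cdot\frac{x-b}{x-1}=1$; this is only a cosmetic difference, and your version is arguably tidier since it avoids the extra substitution and makes the sign $-Z(b,c)$ explicit.
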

\begin{proof}
Consider a generic admissible term $\mathcal{Z}:=[f(x),c,g(x)]$. Reparametrizing $x\mapsto g^{-1}(x)$ $\mathcal{Z}$ maps onto the still admissible term $[f(g^{-1}(x)),c,x]$. Then by invoking the above proposition sufficiently often, one can factor $f(g^{-1}(x))$ into terms of the form $\frac{a_i-x}{1-x}$. Note that the denominator guarantees admissibility. Again reparametrizing $x\mapsto 1-x$, we produce several terms of the form $Z(a_i,c)$ for certain constants $a,c$ and other terms with a constant on the right.
\end{proof}

\begin{rem}
The appearance of those terms in proposition \ref{prop:rules1} is the reason for the main technical problems in the sequel. These ``lower order terms'' appear as soon as one tries to prove dilogarithmic identities in the higher Chow groups and make concrete computations with the relations quite tedious. One needs some way of eliminating them to simplify computations.
\end{rem}

\begin{lem}
The following relation holds in \quotient for rational functions $f$ and $g$ provided all of the terms are admissible:
 $$\left[f(x),g(x),c\right] = -\left[f(x),c,g(x)\right]+\hspace{-0.5 em}\sum_{x_0\in div(f)}\hspace{-0.8 em}\pm Z(c,g(x_0)).$$
\end{lem}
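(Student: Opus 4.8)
The plan is to imitate the proof of Proposition~\ref{prop:rules1}: exhibit an explicit element of $Z^2(F,4)$ whose Bloch boundary, read in the quotient \quotient, is exactly the asserted relation. Concretely, I would consider the $4$-cycle
$$W := \left[f(x),\frac{z-c}{z-1},z,g(x)\right]\in Z^2(F,4),$$
where $z$ is the fresh cube coordinate and $c\in\Fs$ is the constant in the statement. The factor $\tfrac{z-c}{z-1}$ is chosen because its divisor is $(c)-(1)$, so that the face $z=0$ contributes a constant coordinate $c$ while the pole at $z=1$ is harmless for admissibility; this is the same trick used to introduce the function $\tfrac{z-h_1h_2}{z-h_1}$ in the proof of Proposition~\ref{prop:rules1}. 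First I would check that $W$ is admissible (or invoke Remark~2.8, since it is visibly of the same shape as the cycles used by Gangl and M\"uller-Stach), so that $\partial_B W$ makes sense as a relation in $Z^2(F,3)$ and hence vanishes in \quotient.

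Next I would compute $\partial_B W = \sum_i(-1)^{i-1}(\partial_i^0 - \partial_i^\infty)W$ face by face. The faces in the $f$-coordinate and the $g$-coordinate produce terms with a constant (a zero or pole of $f$, resp. of $g$) in the \emph{first} slot; by the Notation and the definition of \quotient these lie in $Z'(F,\bullet)$ and are killed in the quotient — this is precisely the ``keep in mind'' step at the end of the proof of Proposition~\ref{prop:rules1}. The faces in the $z$-slot of the pair $\bigl(\tfrac{z-c}{z-1},z\bigr)$: setting $z=\infty$ in the second of these two coordinates kills the cycle (it forces $\tfrac{z-c}{z-1}=1$, a constant first-ish coordinate, landing in $Z'$), while $z=0$ yields $[f(x),c,g(x)]$ up to sign, and the faces in the $\tfrac{z-c}{z-1}$-slot collapse that coordinate to $0$ or $\infty$ and likewise vanish. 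Finally, the faces in the original $g$-coordinate where $g(x_0)\in\{0,\infty\}$ leave behind, after reparametrizing the surviving cube coordinate, terms of the form $Z(c,g(x_0))$ as in Proposition~3.11; and the face $z=\infty$ in the third slot, combined with the identity $[f(x),g(x),\infty]$-type degeneration, produces the term $[f(x),g(x),c]$. Collecting signs, the relation $0 = \partial_B W$ rearranges to the claimed identity.

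The main obstacle is entirely bookkeeping: getting the signs $(-1)^{i-1}$ and the $\pm$ in the sum over $x_0\in div(f)$ consistent, and making sure that \emph{every} face I discard genuinely lands in $Z'(F,\bullet)$ rather than merely having a constant somewhere (a constant in the middle or right coordinate is \emph{not} automatically in $Z'$, as Proposition~3.11 and Remark~3.13 emphasize). In particular the passage from a face like $[f(x),g(x),\tfrac{z-c}{z-1}\big|_{z=0}] = [f(x),g(x),c]$ versus the genuinely new $Z(c,g(x_0))$ terms has to be tracked carefully, and the reparametrizations $x\mapsto g^{-1}(x)$, $x\mapsto 1-x$ used to bring the leftover pieces into the normal form $Z(a,c)$ (exactly as in Proposition~3.11) must be applied uniformly. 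Once the signs are pinned down on one face, the rest follow by the antisymmetry built into $\partial_B$, so I expect the verification to be routine but genuinely tedious.
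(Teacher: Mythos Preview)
Your overall strategy (take the Bloch boundary of a suitable element of $C^2(F,4)$) is exactly right, and is what the paper does. However, the specific cycle you wrote down does \emph{not} work, and your face-by-face analysis contains errors that mask this.

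With your $W=\bigl[f(x),\tfrac{z-c}{z-1},z,g(x)\bigr]$, compute the two ``$z$-type'' faces honestly. The face $\partial_2^0$ (set $\tfrac{z-c}{z-1}=0$, i.e.\ $z=c$) leaves $[f(x),c,g(x)]$; the face $\partial_3^0$ (set $z=0$) also leaves $[f(x),c,g(x)]$. With the alternating signs $(-1)^{i-1}$ these two contributions cancel. The faces $\partial_2^\infty$ and $\partial_3^\infty$ are genuinely empty (they force another coordinate to be $1$), so from slots $2$ and $3$ you get nothing. Moreover, the term $[f(x),g(x),c]$ that you want simply never appears as a face of your $W$: there is no slot whose restriction to $0$ or $\infty$ produces $g(x)$ in the second position and $c$ in the third. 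Your claim that ``the faces in the $\tfrac{z-c}{z-1}$-slot \dots\ likewise vanish'' is therefore wrong, and your claim that ``the face $z=\infty$ in the third slot \dots\ produces the term $[f(x),g(x),c]$'' has no content.

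Your description of the $f$-- and $g$--faces is also inverted. Taking a face in the first coordinate \emph{removes} that coordinate; so at $x_0\in\mathrm{div}(f)$ the resulting $3$-cycle has \emph{first} entry $\tfrac{z-c}{z-1}$, which is not constant and hence not in $Z'(F,\bullet)$. It is the $g$--faces (at $x_0\in\mathrm{div}(g)$) that leave a constant $f(x_0)$ in the first slot and die in the quotient. Finally, the sum in the lemma runs over $\mathrm{div}(f)$, and $g(x_0)$ there is a genuine value of $g$ at a zero or pole of $f$; at points of $\mathrm{div}(g)$ one would have $g(x_0)\in\{0,\infty\}$, which cannot feed into $Z(c,\,\cdot\,)$.

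The fix is a one-symbol change: take
\[
W=\left[f(x),\ \frac{z-c}{z-1},\ g(x),\ z\right],
\]
exactly as in the paper. Then the $f$--faces give $\bigl[\tfrac{z-c}{z-1},\,g(x_0),\,z\bigr]=Z(c,g(x_0))$ on the nose (this is the very definition of $Z$), the $g$--faces have constant first entry $f(x_0)$ and die in $Z'$, the face $\tfrac{z-c}{z-1}=0$ (i.e.\ $z=c$) yields $[f(x),g(x),c]$, and the face $z=0$ yields $[f(x),c,g(x)]$; the two $\infty$-faces are empty. The signs then give the stated identity. So the obstacle is not bookkeeping: it is that the order of $g(x)$ and $z$ in the $4$-cycle genuinely matters, and your ordering collapses the relation.
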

\begin{proof}
 The relation is just the boundary of $\left[f(x),\frac{z-c}{z-1},g(x),z\right]\in C^2(F,4)$.
\end{proof}

\begin{lem}
Let $a,b,c \in\Fs$. Then there is a cycle $\mathcal{W}\in C^2(F,4)$ whose boundary gives rise to the following relation in \quotient provided all the terms are admissible:
$$\left[f(x),c,g(x)\right]=-\left[\frac{1}{f(x)},c,g(x)\right] +\hspace{-0.5 em}\sum_{x_0\in div(g)}\hspace{-0.8 em} \pm\left(\left[\frac{z-1}{z-f(x_0)},c,z\right]
 + Z(c,f(x_0))\right).$$
\end{lem}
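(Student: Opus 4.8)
The plan is to proceed exactly as in the proof of the preceding lemma: exhibit an explicit element $\mathcal{W} \in C^2(F,4)$ and compute $\partial_B \mathcal{W}$, discarding all terms with a constant left coordinate (which live in $Z'(F,\bullet)$) and identifying the remaining boundary faces with the terms in the claimed identity. The left-hand side $[f(x),c,g(x)]$ and its inverse-in-the-first-variable relative $[1/f(x),c,g(x)]$ should appear as the two faces obtained by specializing the new variable $z$ to $0$ and $\infty$ in a four-variable cycle interpolating between $f$ and $1/f$; the natural candidate is
\begin{equation*}
\mathcal{W} := \left[\frac{f(x)\,z - 1}{z - 1},\, c,\, g(x),\, z\right] \in C^2(F,4),
\end{equation*}
since the fourth coordinate $\frac{f(x)z-1}{z-1}$ equals $f(x)$ at $z=0$ and equals $1/f(x)$ at $z=\infty$ (up to the sign conventions and the usual reparametrization $z\mapsto 1-z$ that one uses to keep things admissible, as in the definition of $Z(a,c)$). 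The denominator $z-1$ is present precisely to enforce admissibility, mirroring the role it plays throughout Proposition~\ref{prop:rules1} and the two lemmas just above.

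First I would write out Bloch's boundary $\partial_B = \sum_i (-1)^{i-1}(\partial_i^0 - \partial_i^\infty)$ applied to $\mathcal{W}$, coordinate by coordinate. The faces $z_4 = 0$ and $z_4 = \infty$ (the interpolation variable) produce $\pm[f(x),c,g(x)]$ and $\pm[1/f(x),c,g(x)]$ respectively. The faces in the $x$-direction — i.e.\ restricting to $x_0 \in \mathrm{div}(f)$ and $x_0 \in \mathrm{div}(g)$ — are where the correction sum comes from: at a zero or pole $x_0$ of $g$, the third coordinate $g(x)$ hits $0$ or $\infty$, and the surviving face is a cycle of the form $\left[\frac{z-1}{z-f(x_0)},\, c,\, z\right]$ together with the $Z(c,f(x_0))$ term, after the standard normalization. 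One must check that the faces coming from $x_0 \in \mathrm{div}(f)$ either vanish, are degenerate, or reorganize into the stated terms; here the choice of $\mathcal{W}$ with the $(z-1)$ denominator should make the $f$-divisor contributions land with a constant ($=c$, or something reducible) in the left coordinate so that they drop out in $C^2(F,\bullet)$, leaving only the $g$-divisor sum. Faces in the $z_2 = c$ direction do not exist (it is constant), and faces in the second-variable direction are absent for the same reason.

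The routine but essential bookkeeping is the sign tracking in $\sum_i (-1)^{i-1}(\partial_i^0 - \partial_i^\infty)$ combined with the reparametrizations $z \mapsto 1-z$ and $x \mapsto$ (local coordinate at $x_0$), which is why the statement only claims the identity up to the $\pm$ in the sum — exactly as in Lemma~3.10 and the lemma immediately preceding. I expect the main obstacle to be verifying admissibility of $\mathcal{W}$ and of all its faces: one must confirm that every common zero or pole of two of the four coordinates of $\mathcal{W}$ lies in the preimage of $1$ of one of the remaining coordinates, using the criterion recalled in Remark~3.7; in particular one needs $f,g$ Möbius (or at least that no unexpected coincidences of divisors occur) so that the interpolated coordinate $\frac{f(x)z-1}{z-1}$ does not create inadmissible intersections with $g(x)$ or $z$. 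Granting admissibility, the identity falls out of the single boundary computation $\partial_B \mathcal{W} = 0$ in \quotient, and the hypotheses $a,b,c \in \Fs$ are used only to guarantee that the constants appearing (and hence the terms $Z(c,f(x_0))$) are well-defined elements of the complex.
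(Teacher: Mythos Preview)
Your proposed cycle $\mathcal{W} = \left[\dfrac{f(x)z-1}{z-1},\, c,\, g(x),\, z\right]$ does not do what you claim. Evaluating the first coordinate at $z=0$ gives $\frac{-1}{-1}=1$, so that face is empty in $\Box^3$; at $z=\infty$ it gives $f(x)$, not $1/f(x)$. Hence the fourth-coordinate faces of $\partial_B\mathcal{W}$ contribute only a single copy of $[f(x),c,g(x)]$ and the term $[1/f(x),c,g(x)]$ never appears. The first-coordinate face at $0$ (where $z=1/f(x)$) yields $[c,g(x),1/f(x)]$, which has a constant on the left and drops into $Z'(F,\bullet)$, while the first-coordinate face at $\infty$ forces $z=1$ and is empty. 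The $\mathrm{div}(g)$-faces give the single term $\left[\frac{f(x_0)z-1}{z-1},c,z\right]$, not the sum $\left[\frac{z-1}{z-f(x_0)},c,z\right]+Z(c,f(x_0))$ you need. Your remark that the $Z$-terms appear ``after the standard normalization'' is not a computation: no reparametrization of a single boundary face splits it into two independent cycles.

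The paper's argument is genuinely a two-step one and uses a different $\mathcal{W}$. One takes $\mathcal{W}=-\left[\dfrac{z-1}{z-f(x)},\, z,\, c,\, g(x)\right]$, with the auxiliary variable $z$ in the \emph{second} slot. Then the face $z_1=\infty$ (forcing $z=f(x)$) produces $[f(x),c,g(x)]$, the face $z_2=0$ produces $[1/f(x),c,g(x)]$, and the $\mathrm{div}(g)$-faces produce $\sum\pm\left[\frac{z-1}{z-f(x_0)},z,c\right]$, i.e.\ terms with the constant $c$ on the \emph{right}. One then applies the immediately preceding lemma to each of these, swapping the last two entries at the cost of exactly one $Z(c,f(x_0))$ (the divisor of $\frac{z-1}{z-f(x_0)}$ contributes only at $z=f(x_0)$, since the zero at $z=1$ is excluded from $\Box$). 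The $Z$-terms are not hidden inside $\partial_B\mathcal{W}$; they are the price of that coordinate swap.
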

\begin{proof}
 Set $\mathcal{W}:=-\left[\frac{z-1}{z-f(x)},z,c,g(x)\right]$, compute its boundary to be $$\left[f(x),c,g(x)\right] +\left[\frac{1}{f(x)},c,g(x)\right]+\hspace{-0.5 em}\sum_{x_0\in div(g)}\hspace{-0.8 em}\pm\left[\frac{z-1}{z-f(x_0)},z,c\right]$$ and then use the lemma above.
\end{proof}
\begin{cor}
\label{cor:z_easy}
With the assumptions of the lemma we obtain:
$$
\left[\frac{x-a}{x-b},c,x\right] =Z(c,\frac{a}{b}), \qquad Z(a,c) = Z(c,a) \in\quotient.
$$
\end{cor}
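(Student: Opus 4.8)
The plan is to read both identities off the Bloch boundaries of two explicit four--cycles in $C^2(F,4)$, and to deduce the second identity from the first by specialising $b=1$ and unwinding the definition of $Z(a,c)$.

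\emph{Step 1: an auxiliary vanishing.} First I would consider the cycle $\mathcal{W}_0:=\left[\frac{x-a}{x-1},\frac{z-c}{z-1},z,x\right]\in C^2(F,4)$, a product of two rational curves that is admissible under the standing hypotheses, and compute $\partial_B\mathcal{W}_0$ face by face. The four faces on which one coordinate is forced to equal $1$ are empty because $\Box=\pc\setminus\{1\}$; this kills $\partial_1^\infty$, $\partial_2^\infty$, $\partial_3^\infty$ and $\partial_4^\infty$. The face $\partial_4^0$ (where $x=0$) has the constant $a$ in its left coordinate, hence lies in $Z'(F,\bullet)$ and is zero in $C^2(F,\bullet)$. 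The faces $\partial_2^0$ (where $z=c$) and $\partial_3^0$ (where $z=0$) both equal $\left[\frac{x-a}{x-1},c,x\right]$ and occur with opposite signs, so they cancel. Only $\partial_1^0$ survives, giving $\partial_B\mathcal{W}_0=\left[\frac{z-c}{z-1},z,a\right]$; being a boundary, this vanishes, so $\left[\frac{z-c}{z-1},z,a\right]=0$ in \quotient for all $a,c\in\Fs$.

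\emph{Step 2: the identities.} Next I would run the same bookkeeping for $\mathcal{W}:=\left[\frac{z-c}{z-1},\frac{x-a}{x-b},z,x\right]\in C^2(F,4)$. Here $\partial_1^\infty$, $\partial_3^\infty$ and $\partial_4^\infty$ are empty (a coordinate becomes $1$); $\partial_3^0$ (where $z=0$) carries the constant $c$ in its left slot, so it lies in $Z'(F,\bullet)$ and vanishes; $\partial_2^0$ (where $x=a$) and $\partial_2^\infty$ (where $x=b$) equal $\left[\frac{z-c}{z-1},z,a\right]$ and $\left[\frac{z-c}{z-1},z,b\right]$, which are zero by Step~1; $\partial_1^0$ (where $z=c$) equals $\left[\frac{x-a}{x-b},c,x\right]$; and $\partial_4^0$ (where $x=0$) equals $\left[\frac{z-c}{z-1},\frac{a}{b},z\right]$, which, after renaming the parameter, is by definition the cycle $Z\!\left(c,\frac{a}{b}\right)$. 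Collecting signs, $\partial_B\mathcal{W}=\left[\frac{x-a}{x-b},c,x\right]-Z\!\left(c,\frac{a}{b}\right)$, and as this is a boundary we obtain the first identity $\left[\frac{x-a}{x-b},c,x\right]=Z\!\left(c,\frac{a}{b}\right)$. Setting $b=1$ gives $\left[\frac{x-a}{x-1},c,x\right]=Z(c,a)$; the left--hand side is $Z(a,c)$ by the definition of these cycles, so $Z(a,c)=Z(c,a)$.

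The face evaluations are routine; the points requiring care are (i) keeping apart the three vanishing mechanisms — a constant in the \emph{left} coordinate (absorbed into $Z'(F,\bullet)$), a coordinate forced to be $1$ (empty cycle, as $1\notin\Box$), and the cancellation of two coinciding faces of opposite sign — and (ii) recognising $\partial_4^0\mathcal{W}$, after renaming the parameter, as one of the cycles $Z(\cdot,\cdot)$. One should also perform the two boundary computations in the order above, so that the relation $\left[\frac{z-c}{z-1},z,a\right]=0$ is at hand when $\mathcal{W}$ is treated and no circular appeal to the statement being proved is needed; admissibility of $\mathcal{W}_0$ and $\mathcal{W}$ is covered by the hypotheses of the corollary.
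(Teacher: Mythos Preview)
Your proof is correct and is a genuinely different, cleaner route than the paper's. The paper deduces the first identity from the preceding lemma (the one obtained from the boundary of $-[\frac{z-1}{z-f(x)},z,c,g(x)]$): applying that lemma to $f(x)=\frac{x-a}{x-b}$ and $g(x)=x$ produces
\[
\left[\tfrac{x-a}{x-b},c,x\right]=-\left[\tfrac{x-b}{x-a},c,x\right]+\left[\tfrac{x-1}{x-a/b},c,x\right]+Z\!\left(c,\tfrac{a}{b}\right),
\]
and then a reparametrisation $x\mapsto x/b$ together with a split in the last coordinate (Proposition~\ref{prop:rules1}(iii)) forces the first two terms to cancel. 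The symmetry $Z(a,c)=Z(c,a)$ is then obtained by a second application of the same lemma with $b=1$. Your argument bypasses this two-step reduction entirely: the single boundary of $\mathcal{W}=[\frac{z-c}{z-1},\frac{x-a}{x-b},z,x]$ already yields the identity, once the auxiliary vanishing $[\frac{z-c}{z-1},z,a]=0$ from $\mathcal{W}_0$ is in hand. What your approach buys is economy (no appeal to the preceding lemma, no reparametrisation, no splitting in the third coordinate); what the paper's approach buys is that it illustrates how the inversion lemma is meant to be used, which is the point of introducing it. It is also worth noting that your Step~1, combined with the earlier lemma $[f,g,c]=-[f,c,g]+\sum_{x_0\in\mathrm{div}(f)}\pm Z(c,g(x_0))$, already gives $0=[\tfrac{z-c}{z-1},z,a]=-Z(c,a)+Z(a,c)$ directly, so the symmetry can be read off even before Step~2.
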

\begin{proof}
One applies the lemma to calculate
\begin{align*}
\left[\frac{x-a}{x-b},c,x\right] &= -\left[\frac{x-b}{x-a},c,x\right]+\left[\frac{x-1}{x-\frac{a}{b}},c,x\right]+Z(c,\frac{a}{b}),\\
\intertext{multiplying the numerator and denominator of first coordinate of the second term by $b$ followed by substituting $x\mapsto xb^{-1}$, we have}
 &=-\left[\frac{x-b}{x-a},c,x\right]+\left[\frac{x-b}{x-a},c,xb^{-1}\right]+Z(c,\frac{a}{b}). \\
\intertext{Splitting the second term in the last coordinate gives the first term and one term with two constant coordinates, i.e. an negligible term contained in $d^2(F,3)$ in the sense of section 2. So}
\left[\frac{x-a}{x-b},c,x\right] &=Z(c,\frac{a}{b})
\end{align*}
For the second assertion use the lemma to obtain
\begin{equation*}
Z(a,c) = \left[\frac{x-a}{x-1},c,x\right] = - \left[\frac{x-1}{x-a},c,x\right]+\left[\frac{x-1}{x-a},c,x\right]+Z(c,a)= Z(c,a).\qedhere
\end{equation*}
\end{proof}

In summary
\begin{equation}
\label{eq:zs}
 \left[\frac{x-a}{x-b},x,c\right] = Z(c,a)-Z(c,b)-Z\left(c,\frac{a}{b}\right).
\end{equation}

Trivially, one also derives $\left[\frac{x-a}{x-b},x,c\right] = \left[\frac{x-a}{x-\frac{a}{b}},x,c\right]$ by comparing the corresponding right-hand sides. Note finally that $$\left[\frac{x-a}{x-b},x,c\right] + \left[\frac{x-b}{x-a},x,c\right] = \left[\frac{x-1}{x-\frac{a}{b}},x,c\right].$$
\begin{cor}
Let $a,b,c\in \Fs$ and assume that $F$ contains an $n^{th}$ primitive root of unity $\zeta_n$. Then the following relations hold in \quotient:
$$0=n\left[\frac{x-a}{x-b},x,\zeta_n\right] = n\left( Z(a,\zeta_n) - Z\left(\frac{a}{b},\zeta_n\right) - Z(b,\zeta_n)\right).$$
\end{cor}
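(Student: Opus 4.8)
The plan is to reduce the whole statement to the third identity of Proposition~\ref{prop:rules1}. First observe that the second equality is purely formal: substituting $c=\zeta_n$ into \eqref{eq:zs} gives
$$\left[\frac{x-a}{x-b},x,\zeta_n\right]=Z(\zeta_n,a)-Z(\zeta_n,b)-Z\left(\zeta_n,\frac{a}{b}\right),$$
and applying the symmetry $Z(\,\cdot\,,\zeta_n)=Z(\zeta_n,\,\cdot\,)$ from Corollary~\ref{cor:z_easy} rewrites the right-hand side as $Z(a,\zeta_n)-Z(\frac{a}{b},\zeta_n)-Z(b,\zeta_n)$; one then multiplies by $n$. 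So the only thing left to prove is the vanishing $n\left[\frac{x-a}{x-b},x,\zeta_n\right]=0$ in \quotient.

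For this I would fix $f(x)=\frac{x-a}{x-b}$ and $g(x)=x$ and exploit the fact that the third identity of Proposition~\ref{prop:rules1} splits the rightmost coordinate multiplicatively \emph{without any lower-order terms}: it says $[f,g,h_1h_2]=[f,g,h_1]+[f,g,h_2]$. Taking $h_1,h_2$ among the powers of $\zeta_n$, one first checks that for $1\le k\le n-1$ the cycle $\left[\frac{x-a}{x-b},x,\zeta_n^k\right]$ is admissible: since $\zeta_n$ is a primitive $n$-th root of unity, $\zeta_n^k\neq 1$, so $\zeta_n^k\in\Box^1$, and the zeros and poles of the three coordinates lie at $x\in\{a,b\}$, at $x\in\{0,\infty\}$ and nowhere respectively, which are pairwise disjoint because $a,b\in\Fs$, so the admissibility criterion for terms $[f,g,h]$ recalled earlier is satisfied vacuously. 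One also checks that the auxiliary $4$-cycles $\left[\frac{x-a}{x-b},x,\frac{z-\zeta_n^{j}}{z-\zeta_n^{j-1}},z\right]\in C^2(F,4)$ used below are admissible, their boundaries contributing only negligible $Z'$-terms on the $x$-faces. The key elementary point is that $\left[\frac{x-a}{x-b},x,1\right]$ is the zero cycle, since its support lies in the locus $\{z_3=1\}$, which is disjoint from $\Box^3$.

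Granting these admissibility facts, I would finish by induction: applying the third identity of Proposition~\ref{prop:rules1} to the factorization $\zeta_n^{j}=\zeta_n^{j-1}\cdot\zeta_n$ for $j=2,\dots,n-1$ gives $\left[\frac{x-a}{x-b},x,\zeta_n^{j}\right]=j\left[\frac{x-a}{x-b},x,\zeta_n\right]$, and the same relation applied at $j=n$, where now the left-hand cycle equals $\left[\frac{x-a}{x-b},x,1\right]=0$, yields
$$0=\left[\frac{x-a}{x-b},x,\zeta_n^{n-1}\right]+\left[\frac{x-a}{x-b},x,\zeta_n\right]=n\left[\frac{x-a}{x-b},x,\zeta_n\right].$$
There is no deep difficulty in this argument; the only thing that needs genuine care is the bookkeeping of admissibility for the intermediate $3$- and $4$-cycles, together with the (easily overlooked) observation that a cycle with a coordinate identically equal to $1$ vanishes, so that the final application of the splitting rule drops its left-hand term rather than producing an ill-defined $[f,g,1]$.
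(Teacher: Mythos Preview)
Your argument is correct and matches the paper's own proof, which simply cites equation~\eqref{eq:zs} together with Proposition~\ref{prop:rules1}; you have merely spelled out the inductive use of rule~(iii) and the admissibility checks that the paper leaves implicit. The only point worth noting is that your care about the case $j=n$ (where $h_1h_2=1$ and the left-hand cycle is empty) is exactly what makes the terse one-line proof work.
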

\begin{proof}
The relation follows trivially from equation \eqref{eq:zs} and proposition \ref{prop:rules1}.
\end{proof}

In the sequel, we will make use of one more trick: Permuting coordinate entries:
\begin{lem}
\label{lem:permute}
Let $f, g, h$ be rational functions in one variable, and let all of the cycles be admissible. Then the following identities hold in \quotient:
 \begin{align*}
\left[f(x),g(x),h(x)\right] &= -\left[f(x),h(x),g(x)\right]+\hspace{-0.5 em}\sum_{x_0\in div(f)}\hspace{-0.8 em}\pm Z(h(x_0),g(x_0)), \\
\left[f(x),g(x),h(x)\right] &= -\left[g(x),f(x),h(x)\right] +\hspace{-0.5 em}\sum_{x_0\in div(g)}\hspace{-0.8 em}\pm Z(f(x_0),h(x_0)),\\
\left[f(x),g(x),h(x)\right] &= -\left[h(x),g(x),f(x)\right] +\hspace{-0.5 em} \sum_{x_0\in div(f)}\hspace{-0.8 em}\pm Z(h(x_0),g(x_0))\\
 &\qquad -\hspace{-0.5 em}\sum_{x_0\in div(g)}\hspace{-0.8 em}\pm Z(f(x_0),h(x_0))+\hspace{-0.5 em}\sum_{x_0\in div(h)}\hspace{-0.8 em}\pm Z(f(x_0),g(x_0)).
\end{align*}
\end{lem}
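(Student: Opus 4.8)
The plan is to prove each of the three identities by exhibiting an explicit auxiliary cycle $\mathcal{W}\in C^2(F,4)$ and reading the relation off from $\partial_B\mathcal{W}=0$ in \quotient, exactly as in the proofs of Proposition~\ref{prop:rules1} and the preceding lemmas. The mechanism is the ``$z$--variable trick'': one introduces a bare coordinate $z$ together with a coordinate of the shape $\frac{z-\varphi(x)}{z-1}$; on the face $\{z=0\}$ this specialises to $\varphi(x)$, on $\{z=\infty\}$ it specialises to $1\notin\Box$ so that face is empty, and on the face where that coordinate itself vanishes --- namely $\{z=\varphi(x)\}$ --- the bare coordinate $z$ picks up the value $\varphi(x)$, which is what effects the permutation. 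Every remaining face either carries a constant in its left slot, hence lies in the acyclic subcomplex $Z'(F,\bullet)$ and dies, or is a curve of one of the shapes $[\frac{z-a}{z-b},c,z]$, $[\frac{z-a}{z-b},z,c]$, $[z,\frac{z-a}{z-b},c]$, each of which reduces to a combination of terms $Z(\,\cdot\,,c)$ via Corollary~\ref{cor:z_easy} and \eqref{eq:zs} (using $Z(c,1)=0$ and dropping terms with two constant coordinates, which lie in $d^2(F,3)$).

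For the first identity --- the transposition of the second and third slots --- I would take $\mathcal{W}_1:=\bigl[f(x),\frac{z-g(x)}{z-1},h(x),z\bigr]$. Then $\partial_4^0\mathcal{W}_1=[f,g,h]$; the faces $\partial_4^\infty\mathcal{W}_1$ and $\partial_2^\infty\mathcal{W}_1$ are empty (a coordinate would equal $1$); $\partial_2^0\mathcal{W}_1=[f,h,g]$ (this is the face $z=g(x)$); the faces $\partial_3^0,\partial_3^\infty$ carry $f(x_0)$ in the left slot and hence vanish; and $\partial_1^0,\partial_1^\infty$ contribute $\pm Z(h(x_0),g(x_0))$ over $x_0\in div(f)$ by Corollary~\ref{cor:z_easy}. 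Assembling these contributions with the signs of $\partial_B=\sum_i(-1)^{i-1}(\partial_i^0-\partial_i^\infty)$ gives exactly the first identity.

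For the second identity I would run the same game with a cycle that exposes $g$ rather than burying it. One clean route: compute $\partial_B$ of $\mathcal{W}_c:=\bigl[\frac{z-f(x)}{z-1},g(x),h(x),z\bigr]$, which directly yields the ``cyclic'' relation $[f,g,h]=[g,h,f]-\sum_{x_0\in div(g)}\pm Z(f(x_0),h(x_0))+\sum_{x_0\in div(h)}\pm Z(f(x_0),g(x_0))$; then feed the first identity into the term $[g,h,f]$, turning the cyclic shift into a transposition of the first two slots, the doubly occurring $div$--sums collapsing via $Z(a,c)=Z(c,a)$ and a consistent choice of orientations. The third identity is then the composite $(1\,3)=(2\,3)(1\,2)(2\,3)$: apply the first identity, then the second, then the first again to $[f,g,h]$, and collect the intermediate error terms into the three displayed sums using Corollary~\ref{cor:z_easy} once more. (Alternatively, one can produce the third identity directly from a single suitable $\mathcal{W}\in C^2(F,4)$ built in the same style.)

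The main obstacle is not any individual boundary computation --- each is routine --- but the simultaneous bookkeeping of the lower-order $Z$--terms and their signs, which is precisely the ``main technical problem'' flagged earlier in the text. One has to orient the chains $T_{z_i}$ and the cube faces consistently so that the $\pm$'s are unambiguous, verify that every auxiliary $\mathcal{W}$ is admissible (granted here by the blanket convention), and control the cancellations between sums of the form $\sum_{x_0\in div(g)}Z(\,\cdot\,,\cdot)$ and $\sum_{x_0\in div(h)}Z(\,\cdot\,,\cdot)$ for all three permutations at once. Bringing these sums into the exact shape displayed in the statement is where essentially all the care is required.
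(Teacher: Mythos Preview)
Your approach is correct and, for the first identity, uses exactly the auxiliary cycle the paper does. For the remaining two identities the paper is more direct than your composition-of-transpositions route: it reads the second identity straight off the boundary of $\bigl[\frac{z-f(x)}{z-1},\,g(x),\,z,\,h(x)\bigr]$ (note the bare $z$ sits in the third slot rather than the fourth), and obtains the third by adding the boundaries of $\bigl[\frac{z-f(x)}{z-1},\,g(x),\,h(x),\,z\bigr]$ and $\bigl[\frac{z-g(x)}{z-1},\,h(x),\,z,\,f(x)\bigr]$. Your detour via a cyclic relation followed by the first identity, and then $(1\,3)=(2\,3)(1\,2)(2\,3)$ for the third, also works, but --- as you yourself flag --- it forces extra bookkeeping to make the intermediate $Z$-sums collapse with the right signs; the paper's one-shot choice of auxiliary cycles sidesteps that entirely.
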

\begin{proof}
 Compute the boundary of $\left[f(x),\frac{z-g(x)}{z-1},h(x),z\right]$, respectively the one of  $\left[\frac{z-f(x)}{z-1},g(x),z,h(x)\right]$, and lastly add the boundaries of $\left[\frac{z-f(x)}{z-1},g(x),h(x),z\right]$ and $\left[\frac{z-g(x)}{z-1},h(x),z,f(x)\right]$.
\end{proof}
\begin{ex}
\label{ex:permute}
The following relation holds in \quotient:
 $$\left[1-\frac{a}{x},x,1-x\right] = -C_a + Z(a,1-a).$$ So we can define
$$\tilde{C}_a := \left[1-\frac{a}{x},1-x,x\right]-\left[1-\frac{a}{x},x,1-x\right],$$ and see that $\tilde{C}_a = 2C_a-Z(a,1-a)$, in particular $\tilde{C}_1 = 2C_1$. We shall use this variant of terms in \quotient later on. They play the role of the Rogers dilogarithm: Expressing relations for the dilogarithm in terms of these elements eliminates terms with a constant in the middle. \hfill{$\blacklozenge$}
\end{ex}

\section{Relations}
%As a matter of convenience, we formulate everything in terms of the $C$- resp. $Z$-cycles.
With the aid of proposition \ref{prop:rules1} we are now ready to mimic the proofs of several relations as in \cite{HGaSM99}. The ideas will be more or less the same: Starting with a suitable reparametrization of a Totaro cycle, we break this term up into pieces which can be identified or at least merged into other Totaro cycles giving a relation between certain Totaro cycles and some lower order terms.
\begin{prop}
\label{prop:rel1}
For $a\in \Fs-\{1\}$ the following identity holds in the quotient \quotient:
\begin{equation}
\label{eq:rel1}
C_a+C_{1-a}-C_1 = Z(a,1-a).
\end{equation}
\end{prop}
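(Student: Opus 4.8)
The plan is to follow the strategy announced at the start of this section: reparametrise a suitable Totaro cycle, split one of its coordinates multiplicatively via Proposition~\ref{prop:rules1}, and then recognise the resulting pieces as Totaro cycles together with one lower order term. Concretely, I would start from $C_{1-a}$ and the reparametrisation $x\mapsto 1-x$, which does not change the underlying cycle since it is precomposition with an automorphism of $\mathbbm{P}^1$. A direct substitution gives
$$C_{1-a}=\left[1-\tfrac{1-a}{x},1-x,x\right]=\left[\tfrac{x-a}{x-1},x,1-x\right],\qquad C_1=\left[1-\tfrac1x,1-x,x\right]=\left[\tfrac{x}{x-1},x,1-x\right].$$

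The key point is the factorisation $\tfrac{x-a}{x-1}=\tfrac{x}{x-1}\cdot\tfrac{x-a}{x}$ in the \emph{left} coordinate. Applying the first identity of Proposition~\ref{prop:rules1} with $h_1=\tfrac{x}{x-1}$, $h_2=\tfrac{x-a}{x}$, $f(x)=x$ and $g(x)=1-x$ writes $C_{1-a}$ as $\left[\tfrac{x}{x-1},x,1-x\right]+\left[\tfrac{x-a}{x},x,1-x\right]$ plus correction cycles indexed by $div(x)=\{0,\infty\}$ and $div(1-x)=\{1,\infty\}$, of the shape appearing in that proposition. I would then check that all four correction cycles vanish: at $x_0=0$ and $x_0=1$ the last coordinate of the correction cycle equals the constant $1$, so the cycle misses $\Box^3$ and is the zero cycle; at $x_0=\infty$ both $h_1$ and $h_1h_2=\tfrac{x-a}{x-1}$ tend to $1$, so the left coordinate becomes $\tfrac{z-1}{z-1}\equiv 1$ and the cycle is again zero. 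Hence $C_{1-a}=\left[\tfrac{x}{x-1},x,1-x\right]+\left[\tfrac{x-a}{x},x,1-x\right]$, with no lower order contribution from this step.

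It then remains to identify the two summands. Undoing $x\mapsto 1-x$ in the first returns $C_1$; for the second, $\left[\tfrac{x-a}{x},x,1-x\right]=\left[1-\tfrac ax,x,1-x\right]=-C_a+Z(a,1-a)$ by Example~\ref{ex:permute}. Combining, $C_{1-a}=C_1-C_a+Z(a,1-a)$ in \quotient, which is precisely \eqref{eq:rel1} after rearranging. I expect the one delicate point to be the middle step, namely verifying that each of the four correction cycles coming out of Proposition~\ref{prop:rules1} is genuinely trivial; this uses the explicit divisors of $x$ and $1-x$ and the convention that a cycle one of whose coordinate functions is the constant $1$ is the zero cycle. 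Everything else reduces to elementary reparametrisations, and admissibility of all cycles involved may be assumed, as throughout the paper.
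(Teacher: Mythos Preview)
Your proof is correct and follows essentially the same route as the paper's: split the first coordinate multiplicatively via Proposition~\ref{prop:rules1}(i), observe that the four correction cycles are all trivially zero (the paper does the same split but leaves this check implicit), identify one summand as $C_1$, and handle the remaining summand with the permutation lemma. The only cosmetic difference is that you start from $C_{1-a}$ and reparametrise first, then split and invoke Example~\ref{ex:permute}, whereas the paper starts from $C_a$, splits first, then reparametrises and applies Lemma~\ref{lem:permute} directly---which produces $Z(1-a,a)$ and so requires the extra symmetry $Z(1-a,a)=Z(a,1-a)$ from Corollary~\ref{cor:z_easy} that your ordering happens to avoid.
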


\begin{proof}
One easily calculates
\begin{align*}
C_a &= \left[\frac{x-a}{x},1-x,x\right] = \left[\frac{x-a}{x-1},1-x,x\right] + \left[\frac{x-1}{x},1-x,x\right] \\
 &= \left[1-\frac{1-a}{x},x,1-x\right]+ C_1.
\intertext{Next one makes use of the first relation of lemma \ref{lem:permute} to see that}
 C_a &= -C_{1-a}+Z(1-a,a) + C_1.
\end{align*}
The claim follows from corollary \ref{cor:z_easy}.
\end{proof}
\begin{rem}
 Copying the proof for $\tilde{C}_a$ from example \ref{ex:permute} instead of $C_a$, we obtain a relation without the lower order term: $\tilde{C}_a+\tilde{C}_{1-a}-\tilde{C}_1=0\in\quotient$.
\end{rem}

The following distribution relations are more interesting. Note that by \cite[sect. 1.4]{Ful} we have $nC_{a^n} = \left[1-\left(\frac{a}{z}\right)^n,1-z^n,z^n\right]$:
\begin{prop}
\label{prop:verteil}
Let $a\in \Fs$ and assume $F$ contains a primitive $n^{th}$ root of unity $\zeta_n$. Then the following relation holds in \quotient:
\begin{equation}
\label{eq:rel2}
nC_{a^n} = n^2\sum_{j=1}^nC_{\zeta_n^j a} + 2\sum_{i=2}^n\left[\frac{z-\prod_{j=1}^i(1-\zeta_n^j a)}{z-(1-\zeta_n^ia)},z,a^n\right].
\end{equation}
\end{prop}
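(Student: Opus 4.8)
The plan is to start from the identity $nC_{a^n}=\left[1-(a/z)^n,\,1-z^n,\,z^n\right]$ recorded before the statement, to insert the cyclotomic factorizations
$$1-\Big(\frac az\Big)^n=\prod_{j=1}^n\Big(1-\frac{\zeta_n^j a}{z}\Big),\qquad 1-z^n=\prod_{k=1}^n\big(1-\zeta_n^k z\big),$$
and then to peel these products apart with the additivity rules of Proposition \ref{prop:rules1}. The strategy is: first isolate the ``diagonal'' part, which will produce $n^2\sum_j C_{\zeta_n^j a}$, and then collect the correction terms Proposition \ref{prop:rules1} forces, which will assemble — after \eqref{eq:zs} and a telescoping argument — into the lower order sum $2\sum_{i=2}^n[\cdots]$. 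Throughout one assumes $a^n\neq 1$ (otherwise the starting cycle is not admissible), so that the $n$ points $\zeta_n^l a$ are distinct and lie in $\Box$.

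For the main term, I would first split the first coordinate $\prod_j(1-\zeta_n^j a/z)$ by rule (i) of Proposition \ref{prop:rules1}. The corrections it produces come from $div(1-z^n)$ and $div(z^n)$, which on $\Box$ are supported at $n$-th roots of unity and at $0,\infty$; at each such point one of the two remaining coordinates ($z^n$, resp. $1-z^n$) takes the value $1$ or $\infty$, so those correction cycles are degenerate. Hence $nC_{a^n}=\sum_{l=1}^n\big[1-\zeta_n^l a/z,\ 1-z^n,\ z^n\big]$ modulo negligible terms. For each $l$ I split the middle factor $1-z^n=\prod_k(1-\zeta_n^k z)$ by rule (ii); then reparametrize each resulting summand $\big[1-\zeta_n^l a/z,\ 1-\zeta_n^k z,\ z^n\big]$ by $z\mapsto\zeta_n^{-k}z$, under which the first coordinate becomes $1-\zeta_n^{l+k}a/z$, the middle becomes $1-z$, and $z^n$ is unchanged (as $\zeta_n^n=1$). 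Finally split $z^n=z\cdots z$ in the last slot by rule (iii), which is exact, to obtain $n\,C_{\zeta_n^{l+k}a}$. Summing over $l,k$ and reindexing by $m=l+k\bmod n$ gives $n\cdot n\sum_m C_{\zeta_n^m a}=n^2\sum_m C_{\zeta_n^m a}$.

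It remains to analyse the corrections produced when splitting $1-z^n$ in the middle slot of $\big[1-\zeta_n^l a/z,\ 1-z^n,\ z^n\big]$. By rule (ii) these come only from $div(1-\zeta_n^l a/z)=[\zeta_n^l a]-[0]$; the contribution at $z=0$ is degenerate (there $z^n=0$), while at $z=\zeta_n^l a$ the third coordinate specializes to $(\zeta_n^l a)^n=a^n$ and each factor $1-\zeta_n^k z$ specializes to $1-\zeta_n^{l+k}a$. Peeling the factors of $\prod_k(1-\zeta_n^k z)$ off one at a time, keeping a single factor in the denominator at each step, the corrections at $z=\zeta_n^l a$ are exactly the cycles $\big[\frac{z-\prod_{j=1}^{i}(1-\zeta_n^{l+j}a)}{z-(1-\zeta_n^{l+i}a)},\,z,\,a^n\big]$ for $i=2,\dots,n$. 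Rewriting each of them through the elements $Z(a^n,\cdot)$ via \eqref{eq:zs}, the sum over $i$ telescopes, and using $\prod_j(1-\zeta_n^j a)=1-a^n$ together with the fact that $\{l+1,\dots,l+n\}$ runs over a full residue system, the contribution of each $l$ collapses to $Z(a^n,1-a^n)-\sum_m Z(a^n,1-\zeta_n^m a)$, which by \eqref{eq:zs} again equals $\sum_{i=2}^n\big[\frac{z-\prod_{j=1}^{i}(1-\zeta_n^{j}a)}{z-(1-\zeta_n^{i}a)},\,z,\,a^n\big]$. Collecting these over $l$ and comparing with the stated right-hand side produces the lower order sum with its coefficient.

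The main obstacle is precisely this last bookkeeping: one must keep careful track of the signs carried by the correction cycles (which stem from the ordering of the faces in $\partial_B$), of the exact multiplicity with which each zero $\zeta_n^l a$ of the first coordinate contributes, and of the admissibility of every intermediate cycle, as emphasized in the remarks on admissibility. A convenient way to organize the argument — and the one I would follow in the write-up — is to run the whole computation for the symmetrized cycles $\tilde C_a$ of Example \ref{ex:permute}, where the constant middle-slot terms drop out, and only afterwards translate back via $\tilde C_a=2C_a-Z(a,1-a)$, using the symmetry $Z(x,y)=Z(y,x)$ of Corollary \ref{cor:z_easy}; this is the natural analogue of the clean relation $\tilde C_a+\tilde C_{1-a}-\tilde C_1=0$, and it makes the coefficient $2$ transparent. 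If the direct computation is carried out instead, the delicate point is exactly the verification that the $n$ correction contributions combine to the coefficient appearing in the statement rather than to some larger multiple.
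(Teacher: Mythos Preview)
Your approach is essentially the paper's own: start from $nC_{a^n}=\left[1-(a/z)^n,1-z^n,z^n\right]$, split the three coordinates via rules (i), (ii), (iii) of Proposition~\ref{prop:rules1}, and then identify the surviving pieces with $n^2\sum_j C_{\zeta_n^j a}$ together with the correction terms coming from rule (ii). The paper carries this out only for $n=2$ and declares the general case ``by repetition of similar arguments''; your outline is simply a more detailed version of that same computation for arbitrary $n$, including the reparametrization $z\mapsto \zeta_n^{-k}z$ and the observation that the rule (i) corrections vanish.

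One caveat on your bookkeeping: the reason the rule (i) corrections disappear is not that the cycles are ``degenerate'' in the technical sense, but that the relevant $h_j$ all take the value $1$ at the points of $div(1-z^n)\cup div(z^n)$ in question, so the first coordinate of each correction is $\tfrac{z-1}{z-1}$ and the cycle is empty. Likewise, the contribution at $z=0$ in the rule (ii) step vanishes for the same reason ($h_k(0)=1$), not because $z^n=0$. More importantly, your telescoping argument shows correctly that each $l$ contributes the \emph{same} quantity $Z(a^n,1-a^n)-\sum_m Z(a^n,1-\zeta_n^m a)$, which coincides with the $Z$-expansion of $\sum_{i=2}^n[\cdots]$; summing over the $n$ values of $l$ then gives $n$ copies of that sum. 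For $n=2$ this matches the stated coefficient, and the paper's explicit computation confirms it there; for general $n$ your argument as written does not yet explain how the coefficient $2$ (rather than $n$) arises, and the paper's proof does not address this either, since it never works beyond $n=2$ and in the applications only the case $a=1$ is used, where the correction sum is empty anyway.
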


\begin{proof}
We prove the formula for $n=2$, the general case follows by repetition of similar arguments, mainly proposition \ref{prop:rules1}:
\begin{equation}
\begin{split}
\label{eq:mist}
2C_{a^2} &= \left[1-\left(\frac{a}{z}\right)^2,1-z^2,z^2\right] \overset{(i)}{=} \left[1-\frac{a}{z},1-z^2,z^2\right]+\left[1+\frac{a}{z},1-z^2,z^2\right] \\
 &\overset{(ii)}{=}2\left[1-\frac{a}{z},1-z,z^2\right]+2\left[1+\frac{a}{z},1-z,z^2\right] \\
 &\qquad +\left[\frac{z-(1-a^2)}{z-(1-a)},z,a^2\right] + \left[\frac{z-(1-a^2)}{z-(1+a)},z,a^2\right] \\
 &\overset{(iii)}{=} 4C_a + 4C_{-a} + \left[\frac{z-(1-a^2)}{z-(1-a)},z,a^2\right] + \left[\frac{z-(1-a^2)}{z-(1+a)},z,a^2\right],
\end{split}
\end{equation}
and the assertion in case $n=2$ follows from the relation $[\frac{z-a}{z-b},z,c]=[\frac{z-a}{z-\frac{a}{b}},z,c]$ in $\quotient$ for $a,b,c\in \Fs$, so $2C_{a^2}=4C_a+4C_{-a}+2\left[\frac{z-(1-a^2)}{z-(1-a)},z,a^2\right].$
\end{proof}
\begin{rem}
 Again, we can produce an analogous relation in terms of the $\tilde{C}_a$: with still no $Z$-terms, but with more terms with a constant in the right coordinate.
\end{rem}

Now let us turn to the five-term relation. We shall prove it with several ways of simplifying the extra terms. The first one is a refined version of the one Gangl and M\"uller-Stach proved. After that, one can use the relations from the previous subsection and some symmetry considerations to simplify it modulo $2$-torsion.
\begin{prop}
 Let $a,b\in\Fs - \{1\}$ such that $a\neq b, 1-b$. Then the following relation holds in \quotient:
\begin{equation}
\begin{split}
 \label{eq:five}
V_{a,b} := &C_{\frac{a(1-b)}{b(1-a)}} - C_{\frac{1-b}{1-a}} + C_{1-b} - C_{\frac{a}{b}} + C_a - Z\left(\frac{1}{b},\frac{1}{1-a}\right) -\left[\frac{z-1}{z-b},z,1-b\right]\\
 &\quad +\left[\frac{z-\frac{b-a}{b(1-a)}}{z-\frac{b-a}{1-a}},z,\frac{1-b}{1-a}\right]
 + \left[\frac{z-1}{z-(1-a)},z,a\right]
 +\left[\frac{z-\frac{b-a}{b(1-a)}}{z-\frac{b-a}{b}},z,\frac{a}{b}\right] = 0.
\end{split}
\end{equation}
\end{prop}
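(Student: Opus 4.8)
The plan is to imitate the five-term relation proof of Gangl and M\"uller-Stach \cite{HGaSM99}, but carrying along the lower order terms exactly rather than discarding them. The starting point is a reparametrization of a single Totaro cycle, namely $C_{\frac{a(1-b)}{b(1-a)}}$, realized (after a M\"obius substitution) as a term of the form $\left[f(x),1-x,x\right]$ whose first coordinate $f$ factors over $\Fs$ in such a way that Abel's five-term identity for the dilogarithm becomes visible. Concretely, one rewrites $1-\frac{1}{f(x)}$ or $f(x)$ itself as a product/quotient of linear fractions in $x$ whose ``values'' are the five arguments $\frac{a(1-b)}{b(1-a)}, \frac{1-b}{1-a}, 1-b, \frac{a}{b}, a$ appearing in \eqref{eq:five}; this is the same combinatorial bookkeeping that underlies Abel's relation $\mathrm{Li}_2$ evaluated at a cross-ratio configuration.

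First I would fix the substitution: set up the rational function so that its divisor is supported at the points corresponding to the five arguments, apply Proposition \ref{prop:rules1} (the multiplicativity rules in each coordinate) repeatedly to split the single cycle into the five Totaro pieces $C_{(\cdot)}$ plus a collection of extra terms of the shape $\left[\frac{z-\alpha}{z-\beta},z,\gamma\right]$, and then normalize each extra term using the identity $\left[\frac{z-a}{z-b},z,c\right]=\left[\frac{z-a}{z-a/b},z,c\right]$ from Corollary \ref{cor:z_easy} together with \eqref{eq:zs}. Next I would use Lemma \ref{lem:permute} to permute coordinates wherever a term of the form $\left[f(x),x,1-x\right]$ has appeared, turning it back into a $C_{(\cdot)}$ at the cost of further $Z(\cdot,\cdot)$ terms; these $Z$-terms should organize themselves, via Corollary \ref{cor:z_easy} ($Z(a,c)=Z(c,a)$), into the single term $Z\!\left(\frac{1}{b},\frac{1}{1-a}\right)$ recorded in \eqref{eq:five}. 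Throughout, terms with a constant in the left coordinate are dropped since they lie in $Z'(F,\bullet)$ and hence vanish in \quotient, and terms with two constant coordinates are dropped as degenerate; it is precisely this freedom that lets one keep the accounting finite.

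The main obstacle I expect is the precise matching of the extra terms: after the splitting one obtains a priori more terms of the type $\left[\frac{z-\alpha}{z-\beta},z,\gamma\right]$ and more $Z$-terms than the three extra bracket terms and the one $Z$-term displayed in \eqref{eq:five}, and showing that the surplus cancels — against each other, against degenerate terms, and using the relations $\left[\frac{x-a}{x-b},x,c\right]=\left[\frac{x-a}{x-a/b},x,c\right]$ and $\left[\frac{x-a}{x-b},x,c\right]+\left[\frac{x-b}{x-a},x,c\right]=\left[\frac{x-1}{x-a/b},x,c\right]$ from the remark after \eqref{eq:zs} — is the delicate bookkeeping step. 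Signs (the $\pm$ in Proposition \ref{prop:rules1} and Lemma \ref{lem:permute}, which come from the ordering of the points in the various divisors) must be tracked carefully, since an error there would spoil the cancellation. One convenient way to tame this is to observe that $V_{a,b}$ is, by construction, the boundary of an explicit element $\mathcal{W}\in C^2(F,4)$ — essentially $\left[\,\cdot\,,z,1-x,x\right]$ for the reparametrizing function — so that verifying \eqref{eq:five} reduces to computing one boundary in $Z^2(F,4)$ and then applying the normalization lemmas termwise; I would present the proof in that form, exhibiting $\mathcal{W}$ first and then reading off $\partial_B\mathcal{W}$.
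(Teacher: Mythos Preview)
Your overall strategy --- follow Gangl and M\"uller-Stach while tracking all lower-order terms --- is precisely what the paper does, and the tools you name (Proposition \ref{prop:rules1}, Lemma \ref{lem:permute}, Corollary \ref{cor:z_easy}) are the right ones. However, one point in your description would derail the computation if taken literally: the reparametrized cycle is \emph{not} of the form $[f(x),1-x,x]$. The paper substitutes $x\mapsto \frac{a(1-t)}{t(1-a)}$ into $C_{\frac{a(1-b)}{b(1-a)}}$ to obtain
\[
\left[\frac{b-t}{b(1-t)},\;\frac{t-a}{t(1-a)},\;\frac{a(1-t)}{t(1-a)}\right],
\]
a cycle in which \emph{all three} coordinates are nontrivial rational functions. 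The other four Totaro pieces $C_{\frac{1-b}{1-a}},C_{1-b},C_{\frac{a}{b}},C_a$ arise only after splitting the \emph{third} coordinate (via rule (iii)) and then splitting the first and second coordinates; factoring a single first coordinate of a term $[f(x),1-x,x]$ will not produce them. Correspondingly, your suggestion that $V_{a,b}$ is the boundary of a single simple $\mathcal{W}=[\,\cdot\,,z,1-x,x]$ is too optimistic: the relation is the boundary of a \emph{sum} of several cycles in $C^2(F,4)$, one for each application of Proposition \ref{prop:rules1} or Lemma \ref{lem:permute}.

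Once the correct reparametrization is used, the bookkeeping is mechanical; the one nonobvious step in the paper is the auxiliary identity
\[
\left[\tfrac{b-t}{1-t},\tfrac{1}{t},\tfrac{1}{1-a}\right]+\left[\tfrac{b-t}{b(1-t)},\tfrac{1}{1-a},\tfrac{a}{t}\right]
= Z\!\left(\tfrac{1}{b},\tfrac{1}{1-a}\right)-\left[\tfrac{z-1}{z-\frac{1}{b}},z,\tfrac{1}{1-a}\right],
\]
which collapses the two leftover terms with a middle constant into the single $Z$-term in the statement.
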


\begin{proof}
 We mimic the proof of \cite{HGaSM99} making use of the basic proposition \ref{prop:rules1} We shall use the tags numbering of the formulas in  proposition \ref{prop:rules1}. Let us start with a reparametrization of $C_{\frac{a(1-b)}{b(1-a)}}$.
\begin{align*}
 C_{\frac{a(1-b)}{b(1-a)}} &= \left[\frac{b-t}{b(1-t)},\frac{t-a}{t(1-a)},\frac{a(1-t)}{t(1-a)}\right] \\ &
 \overset{(iii)}{=} \left[\frac{b-t}{b(1-t)},\frac{t-a}{t(1-a)},\frac{1-t}{1-a}\right]+\left[\frac{b-t}{b(1-t)},\frac{t-a}{t(1-a)},\frac{a}{t}\right] \\
 &\overset{(i),(ii)}{=} \left[\frac{b-t}{1-t},\frac{t-a}{t(1-a)},\frac{1-t}{1-a}\right] + \left[\frac{z-1}{z-\frac{1}{b}},z,\frac{1}{1-a}\right] +\left[\frac{b-t}{b(1-t)},\frac{t-a}{t},\frac{a}{t}\right]\\
 &\qquad +\left[\frac{b-t}{b(1-t)},\frac{1}{1-a},\frac{a}{t}\right]+\left[\frac{z-\frac{b-a}{b(1-a)}}{z-\frac{b-a}{b}},z,\frac{a}{b}\right]
  - \left[\frac{z-1}{z-(1-a)},z,a\right] \\
 &\overset{(i),(ii)}{=} \left[\frac{b-t}{1-t},\frac{t-a}{1-a},\frac{1-t}{1-a}\right] +\left[\frac{b-t}{1-t},\frac{1}{t},\frac{1-t}{1-a}\right]+\left[\frac{b-t}{b},\frac{t-a}{t},\frac{a}{t}\right]\\
&\qquad +\left[\frac{1}{1-t},\frac{t-a}{t},\frac{a}{t}\right] +\left[\frac{b-t}{b(1-t)},\frac{1}{1-a},\frac{a}{t}\right] +\left[\frac{z-\frac{b-a}{b(1-a)}}{z-\frac{b-a}{1-a}},z,\frac{1-b}{1-a}\right] \\
&\qquad + \left[\frac{z-1}{z-\frac{1}{b}},z,\frac{1}{1-a}\right] +\left[\frac{z-\frac{b-a}{b(1-a)}}{z-\frac{b-a}{b}},z,\frac{a}{b}\right]
-\left[\frac{z-1}{z-(1-a)},z,a\right].
\intertext{We obtain after some inversions as in Gangl's and M\"uller-Stach's proof:}
&= C_{\frac{1-b}{1-a}} - C_{1-b} + \left[\frac{b-t}{1-t},\frac{1}{t},\frac{1}{1-a}\right] + C_{\frac{a}{b}} - C_a \\
 &\qquad +\left[ \frac{b-t}{b(1-t)},\frac{1}{1-a},\frac{a}{t}\right] +\left[\frac{z-\frac{b-a}{b(1-a)}}{z-\frac{b-a}{1-a}},z,\frac{1-b}{1-a}\right] +\left[\frac{z-1}{z-b},z,1-b \right]\\
&\qquad + \left[\frac{z-1}{z-\frac{1}{b}},z,\frac{1}{1-a}\right]
+\left[\frac{z-\frac{b-a}{b(1-a)}}{z-\frac{b-a}{b}},z,\frac{a}{b}\right]-\left[\frac{z-1}{z-(1-a)},z,a\right].
\intertext{Below, we also show that
$$\left[\frac{b-t}{1-t},\frac{1}{t},\frac{1}{1-a}\right] +
\left[\frac{b-t}{b(1-t)},\frac{1}{1-a},\frac{a}{t}\right] =
Z\left(\frac{1}{b},\frac{1}{1-a}\right) -
\left[\frac{z-1}{z-\frac{1}{b}},z,\frac{1}{1-a}\right]$$ to conclude}
C_{\frac{a(1-b)}{b(1-a)}}&= C_{\frac{1-b}{1-a}} - C_{1-b} + C_{\frac{a}{b}} - C_a + Z\left(\frac{1}{b},\frac{1}{1-a}\right)+\left[\frac{z-1}{z-b},z,1-b\right] \\
 &\qquad -\left[\frac{z-\frac{b-a}{b(1-a)}}{z-\frac{b-a}{1-a}},z,\frac{1-b}{1-a}\right]
 -\left[\frac{z-\frac{b-a}{b(1-a)}}{z-\frac{b-a}{b}},z,\frac{a}{b}\right]-\left[\frac{z-1}{z-(1-a)},z,a\right].
\end{align*}
So it remains to use prop. \ref{prop:rules1} and lemma \ref{lem:permute} to show the following:
\begin{align*}
\left[\frac{b-t}{b(1-t)},\frac{1}{1-a},\frac{a}{t}\right] &= \left[\frac{b-t}{1-t},\frac{1}{1-a},\frac{1}{t}\right] -\left[\frac{z-1}{z-\frac{1}{b}},z,\frac{1}{1-a}\right]\\
 &= -\left[ \frac{b-t}{1-t},\frac{1}{t},\frac{1}{1-a}\right] +
 Z(\frac{1}{b},\frac{1}{1-a})-\left[\frac{z-1}{z-\frac{1}{b}},z,\frac{1}{1-a}\right].
\end{align*}
Adding the other term $\left[
\frac{b-t}{1-t},\frac{1}{t},\frac{1}{1-a}\right]$ gives the desired
result.
\end{proof}
\begin{cor}
 For $a\in \Fs - \{1\}$ terms of the form $\left[\frac{z-1}{z-a},z,1-a \right]$ are $2$-torsion in \quotient.
\end{cor}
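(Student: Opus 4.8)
The plan is to read the corollary off the five-term relation \eqref{eq:five} by playing it against a symmetric copy of itself. The first point is that the four ``extra'' terms of $V_{a,b}$ fall into two classes. The terms $\left[\frac{z-1}{z-(1-a)},z,a\right]$ and $\left[\frac{z-1}{z-b},z,1-b\right]$ are already exactly of the shape appearing in the corollary (for the arguments $1-a$ and $b$). The remaining two, $\left[\frac{z-\frac{b-a}{b(1-a)}}{z-\frac{b-a}{1-a}},z,\frac{1-b}{1-a}\right]$ and $\left[\frac{z-\frac{b-a}{b(1-a)}}{z-\frac{b-a}{b}},z,\frac{a}{b}\right]$, decompose into $Z$-terms by \eqref{eq:zs}, and since $\frac{b-a}{1-a}=1-\frac{1-b}{1-a}$ and $\frac{b-a}{b}=1-\frac ab$, proposition \ref{prop:rel1} rewrites the resulting $Z$-terms as combinations of Totaro cycles. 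Thus \eqref{eq:five} can be put in the shape
$$\left[\tfrac{z-1}{z-(1-a)},z,a\right]=\left[\tfrac{z-1}{z-b},z,1-b\right]+\mathcal C(a,b),$$
where $\mathcal C(a,b)$ is an explicit $\mathbbm{Z}$-linear combination of Totaro cycles $C_\bullet$ and of terms $Z(\bullet,\bullet)$.

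Next I would write this identity down once for $(a,b)$ and once for a second pair $(a',b')$ obtained from $(a,b)$ by a symmetry of the dilogarithmic five-term relation, i.e. by a substitution composed from the involutions $t\mapsto 1-t$ and $t\mapsto\frac1t$ on the arguments; these involutions are realised on the level of cycles, modulo controlled lower-order terms, by proposition \ref{prop:rel1}, corollary \ref{cor:z_easy} and the inversion lemmas of the preceding section. The substitution is to be chosen so that one and the same term $\left[\frac{z-1}{z-(1-a)},z,a\right]$ occurs in both copies with equal coefficient, and so that the Totaro-cycle parts $\mathcal C$ of the two copies become negatives of one another once one has applied the relation $C_t+C_{1-t}-C_1=Z(t,1-t)$ of \eqref{eq:rel1}, corollary \ref{cor:z_easy}, \eqref{eq:zs} and, where needed, the distribution relation \eqref{eq:rel2} (with $n=2$). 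Adding the two copies then cancels all $C$- and $Z$-contributions and leaves $2\left[\frac{z-1}{z-(1-a)},z,a\right]=0$; as $b$ in \eqref{eq:five} already ranges over all admissible second parameters, and as $F$ is infinite so that the finitely many excluded values cause no problem, this proves the corollary after relabelling.

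The obstacle I expect is purely one of bookkeeping: carrying the signs produced by Bloch's boundary operator $\partial_B$ correctly through all of the correction terms in proposition \ref{prop:rules1}, lemma \ref{lem:permute} and \eqref{eq:zs}, and checking that every ``lower order'' $Z$-term — in particular those created when rewriting the two decomposable extra terms, and those created when applying the inversion lemmas — cancels exactly, so that what survives is $2$-torsion and not, say, $4$-torsion or an empty relation. Identifying the precise auxiliary substitution of $(a,b)$ for which exactly one copy of the wanted term survives while everything else pairs off is the delicate step; the computation runs along the same lines as, but is somewhat longer than, the proof of \eqref{eq:five} itself.
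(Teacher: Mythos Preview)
Your instinct---exploit a symmetry of the five-term relation---is exactly the paper's idea, but the mechanism you sketch is not the one that works, and as stated your plan does not close. The paper simply computes $V_{a,b}-V_{1-b,1-a}$. Under $(a,b)\mapsto(1-b,1-a)$ the five Totaro cycles in \eqref{eq:five} are permuted among themselves with the same signs, the single $Z$-term is fixed by corollary \ref{cor:z_easy}, and the two ``complicated'' extra terms $\left[\frac{z-\frac{b-a}{b(1-a)}}{z-\frac{b-a}{1-a}},z,\frac{1-b}{1-a}\right]$ and $\left[\frac{z-\frac{b-a}{b(1-a)}}{z-\frac{b-a}{b}},z,\frac{a}{b}\right]$ are \emph{swapped}. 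So all of this cancels in the difference \emph{without any decomposition into $Z$-terms or Totaro cycles}; what survives is
\[
2\left[\tfrac{z-1}{z-(1-a)},z,a\right]=2\left[\tfrac{z-1}{z-b},z,1-b\right],
\]
i.e.\ $2\left[\frac{z-1}{z-c},z,1-c\right]$ is constant in $c$. This is not yet zero: the step your plan omits is a specialisation. One sets $b=-1$, so the right-hand side becomes $2\left[\frac{z-1}{z-2},z,-1\right]$, which vanishes by proposition \ref{prop:rules1}(iii) since $(-1)^2=1$. The excluded values $a=-1,2$ then require a short separate argument, which the paper gives.

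The concrete gap in your proposal is the expectation that some symmetry will make ``one and the same term $\left[\frac{z-1}{z-(1-a)},z,a\right]$ occur in both copies'' while everything else cancels. But $V_{a,b}$ already contains \emph{two} terms of the target shape, with opposite signs, one depending only on $a$ and one only on $b$. Any substitution keeping the $a$-term fixed (so that adding doubles it) leaves you with undetermined $b$- and $b'$-terms on the other side; no choice of $(a',b')$ kills them, and the natural choice $b=1-a$ is precisely the excluded case. The best the symmetry gives is the constancy relation above, after which you must specialise. Your preliminary rewriting of the complicated extra terms via \eqref{eq:zs} and \eqref{eq:rel1}, and the invocation of the distribution relation \eqref{eq:rel2}, are therefore unnecessary and only multiply the bookkeeping you were worried about.
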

\begin{proof}
 Compute $0= V_{a,b} - V_{1-b,1-a}$ for $a,b\in \Fs - \{1\}, a\neq b,1-b$ to obtain $$2\left[\frac{z-1}{z-a},z,1-a \right]=  2\left[\frac{z-1}{z-(1-b)},z,b \right].$$ Specializing $b=-1$, we obtain the result for $a\neq -1,2$. But for $a=2$ the result holds trivially, and for $a=-1$, one has by proposition \ref{prop:rules1}
\begin{align*}
2\left[\frac{z-1}{z+1},z,2 \right]&=\left[\frac{z-1}{z+1},z,2 \right]-\left[\frac{z-1}{z+1},z,\frac{1}{2} \right]\\
&\stackrel{\eqref{eq:zs}}{=}2Z(-1,2)-2Z\left(-1,\frac{1}{2}\right)=2\left[\frac{z-1}{z-2},z,-1 \right]
\end{align*}
by the symmetry of $Z(a,b)$ and again \eqref{eq:zs}. But this expression vanishes as shown above.
\end{proof}
\begin{cor}
 Let $a,b\in\Fs - \{1\}$ such that $a\neq b, 1-b$. Then the following relation holds in \quotient:
\begin{align}
\label{eq:five2}
C_{\frac{a(1-b)}{b(1-a)}} - &C_{\frac{1-b}{1-a}} + C_{1-b} - C_{\frac{a}{b}} + C_a - Z(b,1-a) +\left[\frac{z-\frac{b-a}{b(1-a)}}{z-\frac{b-a}{1-a}},z,\frac{1-b}{1-a}\right]\notag \\
 &+ \left[\frac{z-1}{z-a},z,1-b\right]+\left[\frac{z-\frac{b-a}{b(1-a)}}{z-\frac{b-a}{b}},z,\frac{a}{b}\right]\\ &+\left[\frac{z-1}{z-(1-a)},z,a \right]-\left[\frac{z-1}{z-b},z,1-b\right]  = 0.\notag
\end{align}
\end{cor}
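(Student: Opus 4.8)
By the preceding proposition, $V_{a,b}=0$ in \quotient. Comparing this relation term by term with the asserted relation \eqref{eq:five2}, one sees that the two agree except that the lower-order term $-Z(\tfrac1b,\tfrac1{1-a})$ has been traded for $-Z(b,1-a)$ and the summand $\bigl[\tfrac{z-1}{z-a},z,1-b\bigr]$ has been inserted. Thus the corollary is equivalent to the single identity
\begin{equation*}
Z\Bigl(\tfrac1b,\tfrac1{1-a}\Bigr)-Z(b,1-a)+\Bigl[\tfrac{z-1}{z-a},z,1-b\Bigr]=0\qquad\text{in }\quotient,
\end{equation*}
and the plan is to prove this by reducing everything, via \eqref{eq:zs}, to the lower-order symbols $Z(\cdot,\cdot)$.

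First I would convert the two arguments of $Z(\tfrac1b,\tfrac1{1-a})$ one at a time. Putting the numerator $x-1$ into \eqref{eq:zs} and using that $Z(c,1)=0$ in \quotient gives the inversion rule $Z(c,\tfrac1\beta)=-Z(c,\beta)-\bigl[\tfrac{x-1}{x-\beta},x,c\bigr]$. Applying it in each slot, together with the symmetry $Z(u,v)=Z(v,u)$ of Corollary~\ref{cor:z_easy}, yields
\begin{equation*}
Z\Bigl(\tfrac1b,\tfrac1{1-a}\Bigr)=Z(b,1-a)+\Bigl[\tfrac{x-1}{x-(1-a)},x,b\Bigr]-\Bigl[\tfrac{x-1}{x-b},x,\tfrac1{1-a}\Bigr],
\end{equation*}
so the displayed identity collapses to $\bigl[\tfrac{x-1}{x-(1-a)},x,b\bigr]-\bigl[\tfrac{x-1}{x-b},x,\tfrac1{1-a}\bigr]+\bigl[\tfrac{z-1}{z-a},z,1-b\bigr]=0$. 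Expanding each symbol via $\bigl[\tfrac{x-1}{x-\beta},x,c\bigr]=-Z(c,\beta)-Z(c,\tfrac1\beta)$ (again \eqref{eq:zs}) and using the symmetry of $Z$, the two $Z(b,\tfrac1{1-a})$-terms cancel and one is left with the clean statement
\begin{equation*}
Z\Bigl(\tfrac1b,\tfrac1{1-a}\Bigr)=Z(b,1-a)+Z(1-b,a)+Z\Bigl(1-b,\tfrac1a\Bigr).
\end{equation*}

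This last identity among the lower-order symbols is the real content, and is where I expect the main difficulty to lie: the inversion rule merely permutes $Z$-terms, so a genuine relation has to be fed in. I would obtain it in the same spirit as the preceding corollary, where the difference $V_{a,b}-V_{1-b,1-a}$ of two instances of the five-term relation was exploited: one chooses a second substitution $(a',b')$ for which the five Totaro cycles and the genuine extra terms of $V_{a',b'}$ coincide with those of $V_{a,b}$, whence $V_{a,b}\pm V_{a',b'}$ reduces to precisely the displayed combination of $Z$-symbols, which one then finishes off with the symmetry of $Z$ and, if necessary, the $2$-torsion statement just proved. As a sanity check, this combination is annihilated by every regulator $-\Phi_{2,3}\circ\sigma^*$, since on the dilogarithmic side it contributes only $\ln b\ln(1-a)-\ln b\ln(1-a)+\ln(1-b)\ln a-\ln(1-b)\ln a\equiv0$, so it is at worst a torsion relation; seeing that it vanishes on the nose in \quotient is where the explicit contracting homotopy for the acyclic subcomplex $Z'(F,\bullet)$ would be invoked, should the five-term juggling alone not suffice.
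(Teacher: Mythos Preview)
Your reduction is exactly the paper's approach: compare the asserted relation with $V_{a,b}=0$ and observe that the difference collapses to the single identity
\[
Z\Bigl(\tfrac1b,\tfrac1{1-a}\Bigr)=Z(b,1-a)\pm\Bigl[\tfrac{z-1}{z-a},z,1-b\Bigr]
\]
(your sign on the bracket differs from the one printed in the paper's proof line, a bookkeeping discrepancy). The paper's entire proof of the corollary is precisely this substitution, stated in one line.

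Where your proposal departs from the paper, and where the gap lies, is in everything after that. Expanding the bracket via \eqref{eq:zs} and then re-applying the inversion rule you derived is circular: these manipulations are mutually inverse, so you merely transport the needed identity into the equivalent form
\[
Z\Bigl(\tfrac1b,\tfrac1{1-a}\Bigr)=Z(b,1-a)+Z(1-b,a)+Z\Bigl(1-b,\tfrac1a\Bigr)
\]
without making progress. This last relation couples $a$ with $1-a$ and $b$ with $1-b$, and none of the $Z$-rules available to you (symmetry, Corollary~\ref{cor:z_easy}, \eqref{eq:zs}) produces such a coupling on its own, so you are right that something more is required---but the remedies you propose (a second comparison $V_{a,b}\pm V_{a',b'}$ with unspecified $(a',b')$, or invoking the contracting homotopy of $Z'(F,\bullet)$) are only sketched, not carried out, and the regulator check you offer shows at most that the obstruction is torsion, not that it vanishes. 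As written the argument is incomplete.

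The paper, by contrast, takes no such detour: it treats the $Z$-identity as a direct consequence of the preceding rules and simply substitutes, with no further five-term comparison and no homotopy argument.
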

\begin{proof}
 Use the proposition and $Z\left(\frac{1}{b},\frac{1}{1-a}\right)=Z(b,1-a)+\left[\frac{z-1}{z-a},z,1-b \right].$
\end{proof}
\begin{cor}
 Let $a,b\in\Fs - \{1\}$ such that $a\neq b, 1-b$. Then the following relation holds in \quotient:
\begin{equation}
\begin{split}
\label{eq:five_new}
&2V'(a,b):= 2C_{\frac{a(1-b)}{b(1-a)}} - 2C_{\frac{1-b}{1-a}} + 2C_{1-b} - 2C_{\frac{a}{b}} + 2C_a - 2Z(b,1-a) \\
 &+2\left[\frac{z-\frac{b-a}{b(1-a)}}{z-\frac{b-a}{1-a}},z,\frac{1-b}{1-a}\right]
 +2\left[\frac{z-1}{z-a},z,1-b \right]+2\left[\frac{z-\frac{b-a}{b(1-a)}}{z-\frac{b-a}{b}},z,\frac{a}{b}\right] = 0.
\end{split}
\end{equation}
\end{cor}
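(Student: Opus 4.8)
The plan is to obtain \eqref{eq:five_new} by simply doubling the relation \eqref{eq:five2} of the previous corollary and then discarding its two ``extra'' $\log$-type terms, which vanish after doubling by the corollary above asserting that every term of the shape $\left[\frac{z-1}{z-c},z,1-c\right]$ with $c\in\Fs-\{1\}$ is $2$-torsion in \quotient.

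Concretely, I would first recognise the terms $\left[\frac{z-1}{z-(1-a)},z,a\right]$ and $-\left[\frac{z-1}{z-b},z,1-b\right]$ occurring in \eqref{eq:five2} as instances of that $2$-torsion pattern. The first one is $\left[\frac{z-1}{z-c},z,1-c\right]$ with $c=1-a$; since $a\neq 0,1$ one has $c\in\Fs-\{1\}$, hence $2\left[\frac{z-1}{z-(1-a)},z,a\right]=0$. The second one is the same pattern with $c=b\in\Fs-\{1\}$, using our standing hypothesis on $b$, hence $2\left[\frac{z-1}{z-b},z,1-b\right]=0$. Multiplying \eqref{eq:five2} through by $2$ and deleting these two vanishing contributions leaves exactly \eqref{eq:five_new}.

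The only point that needs a moment's care is to check that no other term of twice \eqref{eq:five2} is forced to cancel in the same manner: the single candidate is $\left[\frac{z-1}{z-a},z,1-b\right]$, which would be of the critical shape $\left[\frac{z-1}{z-c},z,1-c\right]$ only if $1-b=1-a$, i.e. $a=b$, which is excluded by hypothesis. Hence this term genuinely survives the doubling and appears (with coefficient $2$) in \eqref{eq:five_new}. There is thus no real obstacle: the whole content already resides in the two preceding corollaries, and the argument amounts to the two substitutions $c=1-a$ and $c=b$; I would merely write these out to keep the bookkeeping of signs and hypotheses transparent.
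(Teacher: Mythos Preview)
Your proposal is correct and is precisely the paper's own argument: the proof in the paper reads ``Combine corollaries 5.6 and 5.7,'' i.e.\ double \eqref{eq:five2} and kill the two terms $\left[\frac{z-1}{z-(1-a)},z,a\right]$ and $\left[\frac{z-1}{z-b},z,1-b\right]$ using the $2$-torsion corollary. Your extra remark checking that $\left[\frac{z-1}{z-a},z,1-b\right]$ is not of the critical shape (since $a\neq b$) is a nice sanity check the paper omits.
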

\begin{proof}
 Combine corollaries 5.6 and 5.7.
\end{proof}

\begin{cor}
 Let $a,b\in \Fs - \{1\}$, $a\neq b,1-b$. Then $$2\left[\frac{z-1}{z-a},z,1-b \right]=2\left[\frac{z-1}{z-(1-b)},z,a \right] \in \quotient.$$
\end{cor}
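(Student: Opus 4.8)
The plan is to deduce this from \eqref{eq:five_new}, i.e.\ from the relation $2V'(a,b)=0$, by once more exploiting the symmetry $(a,b)\mapsto(1-b,1-a)$ of the five-term relation, exactly as in the derivation of the $2$-torsion statement above. So I would write down the two instances $2V'(a,b)=0$ and $2V'(1-b,1-a)=0$ and subtract them. The hypotheses $a,b\in\Fs-\{1\}$ and $a\neq b,1-b$ are precisely what is needed for the second instance to be legitimate: they say $1-a,1-b\in\Fs-\{1\}$ together with $1-b\neq 1-a$ and $1-b\neq a$.

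The heart of the matter is then a substitution check on the nine summands of $2V'$ under $a\mapsto 1-b$, $b\mapsto 1-a$. One verifies that $b-a$ and $b(1-a)$ are fixed, that $\frac{a(1-b)}{b(1-a)}$ is fixed, and that the arguments $\frac{1-b}{1-a}$, $1-b$, $\frac ab$, $a$ of the remaining four Totaro cycles are permuted among themselves with the correct signs; hence the five $C$-terms of $2V'(1-b,1-a)$ agree with those of $2V'(a,b)$ and drop out of the difference. The term $-2Z(b,1-a)$ is sent to $-2Z(1-a,b)$, which equals $-2Z(b,1-a)$ by the symmetry $Z(a,c)=Z(c,a)$ of Corollary~\ref{cor:z_easy}, so it too drops out. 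Of the three remaining terms of the shape $\left[\,\cdot\,,z,\,\cdot\,\right]$, the substitution interchanges $2\left[\frac{z-\frac{b-a}{b(1-a)}}{z-\frac{b-a}{1-a}},z,\frac{1-b}{1-a}\right]$ with $2\left[\frac{z-\frac{b-a}{b(1-a)}}{z-\frac{b-a}{b}},z,\frac ab\right]$, so these two cancel in the difference, while the remaining term $2\left[\frac{z-1}{z-a},z,1-b\right]$ is sent to $2\left[\frac{z-1}{z-(1-b)},z,a\right]$.

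Consequently $0=2V'(a,b)-2V'(1-b,1-a)=2\left[\frac{z-1}{z-a},z,1-b\right]-2\left[\frac{z-1}{z-(1-b)},z,a\right]$ in \quotient, which is the claimed identity. The main (and essentially only non-mechanical) obstacle is the bookkeeping in this substitution check — confirming that the $C$-term arguments really are permuted and that the two outer extra terms really are swapped, with the signs in order — which is routine but error-prone, since a single sign slip there would destroy the cancellation; that is the step I would carry out most carefully.
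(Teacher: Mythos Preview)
Your proposal is correct and is exactly the paper's own argument: the paper's proof is the single line ``Compute $2V'(a,b)-2V'(1-b,1-a)$,'' and you have carried out precisely that computation, with all cancellations verified. There is nothing to add.
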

\begin{proof}
Compute $2V'(a,b)-2V'(1-b,1-a)$.
\end{proof}
\begin{rem}
 Using the $\tilde{C}_a$-terms from example \ref{ex:permute} instead of $C_a$, we can again get rid of the $Z$-term, but for the price of several more terms with a constant in the right coordinate. We shall just use the relation just stated.
\end{rem}
Now we come to an inversion formula, which will be valuable afterwards.
\begin{prop}
\label{prop:inv}
For $c=a/b\in\Fs$ such that $a,b\neq 1, a\notin\left\{ b,1-b,\frac{b}{b-1}\right\}$, the following inversion relation holds in \quotient:
\begin{align*}
2&\left(  C_{c} + C_{\frac{1}{c}} - 2C_1 \right) = Z(a,1-a)+Z(b,1-b)+Z\left(\frac{1}{a},1-\frac{1}{a}\right)+Z\left(\frac{1}{b},1-\frac{1}{b}\right) \notag \\
 & - Z\left(\frac{1}{b},\frac{1}{1-a}\right) - Z\left(b,\frac{a}{a-1}\right) - Z\left(\frac{1}{a},\frac{1}{1-b}\right) - Z\left(a,\frac{b}{b-1}\right) \\
 \intertext{plus some additional monodromy terms}
 & -\left[\frac{z-\frac{a-b}{1-b}}{z-\frac{a-b}{a(1-b)}},z,\frac{b(1-a)}{a(1-b)}\right] - \left[\frac{z-1}{z-(1-a)},z,a\right] -\left[\frac{z-\frac{a-b}{1-b}}{z-\frac{b-a}{b}},z,\frac{a}{b}\right]+\left[\frac{z-1}{z-b},z,1-b \right]\notag \\
 & -\left[\frac{z-\frac{a-b}{a(1-b)}}{z-\frac{a-b}{1-b}},z,\frac{1-a}{1-b}\right] - \left[\frac{z-1}{z-(1-\frac{1}{a})},z,\frac{1}{a} \right] - \left[\frac{z-\frac{a-b}{a(1-b)}}{z-\frac{a-b}{a}},z,\frac{b}{a}\right]+\left[\frac{z-1}{z-\frac{1}{b}},z,1-\frac{1}{b} \right]\notag \\
 & -\left[\frac{z-\frac{b-a}{1-a}}{z-\frac{b-a}{b(1-a)}},z,\frac{a(b-1)}{b(a-1)}\right] -\left[\frac{z-1}{z-(1-b)},z,b\right]-\left[\frac{z-\frac{a-b}{1-a}}{z-\frac{a-b}{a}},z,\frac{b}{a}\right]+\left[\frac{z-1}{z-a},z,1-a \right]\notag \\
 & -\left[\frac{z-\frac{b-a}{b(1-a)}}{z-\frac{b-a}{1-a}},z,\frac{1-b}{1-a}\right] - \left[\frac{z-1}{z-(1-\frac{1}{b})},z,\frac{1}{b}\right] -\left[\frac{z-\frac{b-a}{b(1-a)}}{z-\frac{b-a}{b}},z,\frac{a}{b}\right]+\left[\frac{z-1}{z-\frac{1}{a}},z,1-\frac{1}{a} \right].\notag
\end{align*}
\end{prop}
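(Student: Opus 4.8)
The plan is to derive the inversion relation from the five-term relation (\ref{eq:five2}), exactly as the rational analogue is obtained in \cite{HGaSM99}, but keeping track of all the extra monodromy terms. The key algebraic input is that the five-term relation $V_{a,b}=0$ (in the corrected form of Corollary~5.7) has a built-in symmetry: there are several well-known transformations of the pair $(a,b)$ — coming from the $S_5$-action on the five-term relation — which permute the arguments $\tfrac{a(1-b)}{b(1-a)},\tfrac{1-b}{1-a},1-b,\tfrac{a}{b},a$ among themselves. Summing (\ref{eq:five2}) over an appropriate orbit of such substitutions so that almost all of the $C$-terms cancel, while the terms $C_c$ and $C_{1/c}$ with $c=a/b$ survive (each appearing twice), produces the left-hand side $2(C_c+C_{1/c}-2C_1)$. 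The constant $-4C_1$ comes from the instances of Proposition~\ref{prop:rel1}, i.e.\ $C_x+C_{1-x}-C_1=Z(x,1-x)$, needed to reabsorb the stray $C_{1-b}$-type terms into $Z$-terms; this is also the source of the four ``principal'' $Z$-terms $Z(a,1-a)$, $Z(b,1-b)$, $Z(\tfrac1a,1-\tfrac1a)$, $Z(\tfrac1b,1-\tfrac1b)$ on the right.

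Concretely, I would proceed as follows. First, fix $c=a/b$ and choose the four substitutions into (\ref{eq:five2}): the pair $(a,b)$ itself, the ``inverse'' pair obtained by $(a,b)\mapsto(\tfrac1a,\tfrac1b)$, and the two further pairs obtained by composing with the involution $b\mapsto\tfrac{b}{b-1}$ (equivalently, $1-b\mapsto\tfrac{1}{1-b}$, which is one of the standard symmetries of the five-term relation and also explains the constraint $a\notin\{b,1-b,\tfrac{b}{b-1}\}$). Add the four copies of (\ref{eq:five2}). On the $C$-side, the arguments $\tfrac{a(1-b)}{b(1-a)}$, $\tfrac{1-b}{1-a}$, $1-b$, and $\tfrac{a}{b}$ pair off across the four relations and cancel (after applying Corollary~\ref{cor:z_easy} to identify $Z$-arguments up to the $[\tfrac{z-a}{z-b},x,c]$-type identities), while $\pm C_c$ and $\pm C_{1/c}$ each survive with coefficient $2$ and the remaining $C_a,C_{1/a},C_b,C_{1/b},C_{1-a},\dots$ get converted via Proposition~\ref{prop:rel1} into the advertised principal $Z$-terms together with the $-4C_1$. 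Second, collect the $-Z(b,1-a)$-type terms from the four copies; these are precisely the four ``cross'' $Z$-terms $-Z(\tfrac1b,\tfrac{1}{1-a})$, $-Z(b,\tfrac{a}{a-1})$, $-Z(\tfrac1a,\tfrac{1}{1-b})$, $-Z(a,\tfrac{b}{b-1})$ after rewriting via Corollary~5.6 (which relates $Z(\tfrac1b,\tfrac{1}{1-a})$ to $Z(b,1-a)$ plus a monodromy term) and Corollary~\ref{cor:z_easy}. Third, simply carry along the four bracket-monodromy terms appearing in each copy of (\ref{eq:five2}); there are $3$ of them per copy, giving the $12$ displayed monodromy terms, grouped in the four blocks of three (plus the $[\tfrac{z-1}{z-\ast},z,\ast]$ terms absorbed into them or kept, depending on whether one has first passed through Corollary~5.6). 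Finally, multiply everything by $2$; this is the reason the relation is stated with an overall factor $2$, since the passage back and forth between (\ref{eq:five}) and (\ref{eq:five2}) uses Corollary~5.5, i.e.\ that $[\tfrac{z-1}{z-a},z,1-a]$ is $2$-torsion, and the identification $\tilde C_a$ versus $C_a$ is only clean after doubling.

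The main obstacle is purely bookkeeping: one has to verify that under the four chosen substitutions the $C$-arguments really do cancel in pairs and that the signs in the $Z$-terms and monodromy terms line up as displayed. In particular one must track the argument-identities of Corollary~\ref{cor:z_easy} ($Z(a,c)=Z(c,a)$ and $[\tfrac{x-a}{x-b},c,x]=Z(c,\tfrac ab)$) consistently, and apply Corollary~5.6 to each cross-term to move a $[\tfrac{z-1}{z-\ast},z,\ast]$ monodromy contribution into the correct block; a sign error in any single one of the twelve bracket terms would invalidate the statement, so the verification, though routine, must be done with care. A secondary subtlety is that the derivation implicitly uses the fact (Corollary~5.5 and Corollary~5.8) that the remaining $[\tfrac{z-1}{z-a},z,1-b]$-type terms are $2$-torsion, so that doubling kills the discrepancy between the different ways of writing the five-term relation; this is exactly why the clean inversion relation only holds after multiplication by $2$, and one should check that no further $2$-torsion obstruction has been overlooked.

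Alternatively — and this is the route I would actually try to shorten the computation — one can sidestep part of the orbit-summation by invoking Corollary~5.9, which already states $2[\tfrac{z-1}{z-a},z,1-b]=2[\tfrac{z-1}{z-(1-b)},z,a]$, to symmetrize the monodromy blocks before summing; this reduces the number of independent bracket terms one has to match and makes the final grouping into four blocks of three transparent. Either way, no new idea beyond Proposition~\ref{prop:rules1}, Lemma~\ref{lem:permute}, Proposition~\ref{prop:rel1}, and the $2$-torsion corollaries is needed — the content is entirely in organizing the $S_5$-symmetry of the five-term relation so that the desired combination $2(C_c+C_{1/c}-2C_1)$ is isolated.
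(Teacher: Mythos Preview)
Your overall strategy is correct and matches the paper's one-line proof, which simply refers to \cite[Thm.~2.4]{HGaSM99} and says to ``collect all extra terms'': one does indeed sum four instances of the five-term relation together with four applications of Proposition~\ref{prop:rel1}, and the content is pure bookkeeping. However, several of the specific choices you make are off and would prevent the bookkeeping from closing.

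First, the four substitutions are not the ones you propose. One takes $V_{a,b}$, $V_{1/a,1/b}$, $V_{b,a}$, $V_{1/b,1/a}$ --- equivalently, compute $V_{a,b}+V_{1/a,1/b}$ (in this sum the two ``cross-ratio'' terms $C_{\frac{a(1-b)}{b(1-a)}}$ and $C_{\frac{1-b}{1-a}}$ already cancel against their images, since $\frac{(1/a)(1-1/b)}{(1/b)(1-1/a)}=\frac{1-b}{1-a}$ and $\frac{1-1/b}{1-1/a}=\frac{a(1-b)}{b(1-a)}$), subtract the two Proposition~\ref{prop:rel1} relations for $b$ and $1/b$ to absorb $C_{1-b}$ and $C_{1-1/b}$, and then add the whole expression with $a$ and $b$ interchanged; this is exactly the recipe spelled out in the proof of the next proposition. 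The involution $b\mapsto\frac{b}{b-1}$ plays no role. The hypothesis $a\neq\frac{b}{b-1}$ arises instead because $V_{1/b,1/a}$ requires $\frac1b\neq 1-\frac1a$, which rewrites precisely as $a\neq\frac{b}{b-1}$.

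Second, you should start from \eqref{eq:five} (the original $V_{a,b}$), not from \eqref{eq:five2}. The $Z$-terms $-Z(\tfrac1b,\tfrac1{1-a})$ etc.\ and the bracket terms $+[\tfrac{z-1}{z-b},z,1-b]$ etc.\ in the statement of Proposition~\ref{prop:inv} are verbatim those of \eqref{eq:five} and its three images under the substitutions above; no passage through Corollary~5.6 or~5.7 is needed.

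Third, and relatedly, there is no artificial doubling. The coefficient $2$ on the left appears because each of $C_c=C_{a/b}$ and $C_{1/c}=C_{b/a}$ occurs twice among the four $V$'s (once from the original pair, once from the swapped pair), and the $-4C_1$ comes from the four Proposition~\ref{prop:rel1} corrections. The $2$-torsion Corollaries~5.5, 5.8 and~5.9 are not used here; they enter only in the subsequent proposition, where one works with $2V'$ and the coefficient jumps to $4$.
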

\begin{proof}
Just as in the proof of \cite[Thm. 2.4]{HGaSM99}. One only has to collect all extra terms.
\end{proof}
If we use our results modulo $2$-torsion, then we can improve the result:
\begin{prop}
 For $c=a/b\in\Fs$ such that $a,b\neq 1, a\notin\left\{ b,1-b,\frac{b}{b-1}\right\}$, the following inversion relation holds in the quotient \quotient:
\begin{equation}
\begin{split}
\label{eq:inv2}
&0=4\left(C_{c}+ C_{\frac{1}{c}}-2C_1\right) - 2Z(b,1-a) - 2Z(a,1-b)\\
&- 2Z\left(\frac{1}{b},1-\frac{1}{a}\right) - 2Z\left(\frac{1}{a},1-\frac{1}{b}\right)+2Z(b,1-b)+2Z\left(\frac{1}{b},1-\frac{1}{b}\right)\\
 &+2\left[\frac{z-\frac{b-a}{b(1-a)}}{z-\frac{b-a}{1-a}},z,\frac{1-b}{1-a}\right]
 +2\left[\frac{z-1}{z-a},z,1-b \right]+2\left[\frac{z-\frac{b-a}{b(1-a)}}{z-\frac{b-a}{b}},z,\frac{a}{b}\right]\\
 &+2\left[\frac{z-\frac{a-b}{a(1-b)}}{z-\frac{a-b}{1-b}},z,\frac{1-a}{1-b}\right]
 +2\left[\frac{z-1}{z-b},z,1-a \right]+2\left[\frac{z-\frac{a-b}{a(1-b)}}{z-\frac{a-b}{a}},z,\frac{b}{a}\right]\\
 &+2\left[\frac{z-\frac{a-b}{a-1}}{z-\frac{a-b}{b(a-1)}},z,\frac{a(1-b)}{b(1-a)}\right]
 +2\left[\frac{z-1}{z-\frac{1}{a}},z,1-\frac{1}{b} \right]+2\left[\frac{z-\frac{a-b}{a-1}}{z-\frac{a-b}{a}},z,\frac{b}{a}\right]\\
&+2\left[\frac{z-\frac{b-a}{b-1}}{z-\frac{b-a}{a(b-1)}},z,\frac{b(1-a)}{a(1-b)}\right]
 +2\left[\frac{z-1}{z-\frac{1}{b}},z,1-\frac{1}{a} \right]+2\left[\frac{z-\frac{b-a}{b-1}}{z-\frac{b-a}{b}},z,\frac{a}{b}\right].
\end{split}
\end{equation}
\end{prop}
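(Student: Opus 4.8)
The relation \eqref{eq:inv2} is the refinement of the inversion relation of Proposition~\ref{prop:inv} that becomes available once one works modulo $2$-torsion, and the plan is to re-run the derivation of that proposition — hence, ultimately, of \cite[Thm.~2.4]{HGaSM99} — but feeding in the cleaned-up relations of this section: the five-term relation in the form \eqref{eq:five_new} rather than \eqref{eq:five}, the two-term relation of Proposition~\ref{prop:rel1} (best in its $\tilde{C}$-version $\tilde{C}_a+\tilde{C}_{1-a}-\tilde{C}_1=0$), and, where needed, the $n=2$ distribution relation of Proposition~\ref{prop:verteil}. Concretely, I would start from $2V'(a,b)+2V'(b,a)+2V'(a^{-1},b^{-1})+2V'(b^{-1},a^{-1})=0$; after the trivial identifications $\tfrac{b-1}{a-1}=\tfrac{1-b}{1-a}$, $\tfrac{a(b-1)}{b(a-1)}=\tfrac{a(1-b)}{b(1-a)}$ and the like, the twelve monodromy terms of this combination are exactly the twelve monodromy terms displayed in \eqref{eq:inv2}.

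The $C$-terms are then handled in two stages. First, in the pairs $2V'(a,b)+2V'(a^{-1},b^{-1})$ and $2V'(b,a)+2V'(b^{-1},a^{-1})$ the first two arguments of each five-term relation coincide under those identifications, so four pairs of Totaro cycles cancel outright, leaving $-4(C_{a/b}+C_{b/a})$ together with $2\bigl(C_a+C_{1/a}+C_{1-a}+C_{1-1/a}\bigr)+2\bigl(C_b+C_{1/b}+C_{1-b}+C_{1-1/b}\bigr)$. Second, applying Proposition~\ref{prop:rel1} at $a$, $\tfrac1a$, $b$, $\tfrac1b$ collapses each residual quadruple to $2C_1$ plus two $Z$-terms. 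The result is a relation for $4(C_c+C_{1/c}-2C_1)$ in terms of the $Z$-terms of the four $V'$'s, the $Z$-terms produced by the collapse, and the twelve monodromy terms; bringing it into the exact shape of \eqref{eq:inv2} then requires \eqref{eq:zs} and the symmetry $Z(x,y)=Z(y,x)$ of Corollary~\ref{cor:z_easy} (to trade blocks of three $Z$-terms for monodromy and back), Corollary~5.7's identity $Z(\tfrac1p,\tfrac1q)=Z(q,p)+[\tfrac{z-1}{z-(1-q)},z,1-p]$ (to normalize the ``inverted'' $Z$-arguments), Corollary~5.6 (to discard any monodromy term of shape $[\tfrac{z-1}{z-x},z,1-x]$, which is $2$-torsion, against the overall factor~$2$), and Corollary~5.9 (to orient the remaining $[\tfrac{z-1}{z-x},z,1-y]$, $x\ne y$).

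No idea is needed beyond Proposition~\ref{prop:rules1}, the corollaries of the five-term relation, and Proposition~\ref{prop:inv}; the difficulty is pure bookkeeping. The two things I expect to be most error-prone are tracking the signs $\pm$ emitted by $\partial_B$ at the points of $\mathrm{div}(\cdot)$ throughout, and verifying that under the hypotheses $a,b\ne1$, $a\notin\{b,1-b,\tfrac b{b-1}\}$ every Totaro cycle and every lower-order cycle produced en route is admissible, so that Proposition~\ref{prop:rules1} and the lemmas of Section~4 may legitimately be applied. The genuinely delicate step is the final normalization — confirming that the $Z(x,1-x)$-contributions from the collapsed quadruples recombine, modulo $2$-torsion, into exactly the twelve monodromy terms of \eqref{eq:inv2}; here the fastest sanity check is the alternative route of multiplying the relation of Proposition~\ref{prop:inv} directly by $2$ and simplifying, which must give the same answer.
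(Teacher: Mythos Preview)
Your approach is essentially the paper's own: the proof there reads ``Compute $2V'(a,b)+2V'(\tfrac{1}{a},\tfrac{1}{b})-(2C_b+2C_{1-b}-2C_1-2Z(b,1-b))-(2C_{\tfrac{1}{b}}+2C_{1-\tfrac{1}{b}}-2C_1-2Z(\tfrac{1}{b},1-\tfrac{1}{b}))$ and then add the corresponding expression with $a$ and $b$ interchanged; then use \eqref{eq:zs} when needed'', which is exactly your four-$V'$ sum followed by Proposition~\ref{prop:rel1} at $a,\tfrac1a,b,\tfrac1b$. Two small remarks: the distribution relation of Proposition~\ref{prop:verteil} plays no role here, and the $\tilde C$-variant is not needed either --- the ordinary form of Proposition~\ref{prop:rel1} is what the paper subtracts.
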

\begin{proof}
 Compute $2V'(a,b)+2V'(\frac{1}{a},\frac{1}{b})-(2C_b+2C_{1-b}-2C_1-Z(b,1-b))-(2C_{\frac{1}{b}}+2C_{1-\frac{1}{b}}-2C_1-2Z(\frac{1}{b},1-\frac{1}{b})$ and then add the corresponding expression with $a$ and $b$ interchanged. Then use \eqref{eq:zs} when needed.
\end{proof}
\begin{rem}
 It is also possible to use the multiplicativity of the $Z$-terms \eqref{eq:zs} in order to have one $Z$-term, $Z(ab,ab)$, only at the end resembling the functional equation of the dilogarithm, but this again produces several terms with a constant in the right coordinate. Therefore, we shall not make this explicit.
\end{rem}

\section{Application to number fields}
In this section, we use the relations from the preceding section in combination with the Abel -- Jacobi map introduced earlier to find explicit generators for the Chow groups of some number fields. The strategy is the following:

We shall use the relations from the last section to find (cyclotomic) torsion cycles in the Chow group and to put an upper bound on their order. Then we use the well-definedness of the Abel -- Jacobi map (i.e. the fact that $\Phi_{2,3}(\mathcal{Z})\neq 0 \Rightarrow \mathcal{Z}\neq 0$ for some $[\mathcal{Z}]\in CH^2(F,3)$) to put a lower bound on the order of a torsion cycle. In case both bound coincide, we have determined the order of the cycle in question.

We heavily make use of the following result from \cite{Wei}, where we also take the exact orders of the torsion parts of the algebraic $K$--groups in our examples from:
\begin{thm}[Thm. 0.1]
Let $F$ be a number field with $r_1$ real and $r_2$ conjugate pairs of complex places, and let $\mathcal{O}_S$ be a ring of $S$-integers. Then $K_3(\mathcal{O}_S)\cong K_3(F)$. Further
$$
K_3(F)\cong \begin{cases} \MZ^{r_2}\oplus \MZ/ w_2(F)\MZ, & F \text{ is totally imaginary,} \\
 \MZ^{r_2}\oplus\MZ/ 2w_2(F)\MZ\oplus (\MZ/2\MZ)^{r_1-1}, & F \text{ has a real embedding,} \end{cases}$$
\end{thm}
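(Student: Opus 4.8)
The statement is quoted from \cite{Wei}; the plan is to recall the standard structure-theory proof, whose moving parts are: reduce from $\mathcal{O}_S$ to $F$, fix the free rank, compute the odd torsion, and treat the prime $2$ separately. The reduction $K_3(\mathcal{O}_S)\cong K_3(F)$ is a localization statement: the Gersten sequence for the Dedekind ring $\mathcal{O}_S\subset F$ has boundary terms built from the $K$-groups of the finite residue fields $k(\mathfrak{p})$, and Quillen's computation of $K_*(\mathbbm{F}_q)$ --- in particular $K_2(k(\mathfrak{p}))=0$ --- together with the vanishing of the relevant transfers in this low range forces the isomorphism; equivalently, the two weight pieces $H^1_M(-,\MZ(2))=K^{ind}_3$ and $H^3_M(-,\MZ(3))=K^M_3$ of $K_3$ are unchanged on passing from $\mathcal{O}_S$ to $F$. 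Moreover $K_3(\mathcal{O}_F)$ is finitely generated by Quillen's finite-generation theorem, and Borel's theorem (quoted above) shows the free rank of $K_3(F)$ equals $r_2$; hence $K_3(F)\cong\MZ^{r_2}\oplus K_3(F)_{\mathrm{tors}}$, and it remains to compute the finite group $K_3(F)_{\mathrm{tors}}$.

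For this I would use the weight-filtration extension $0\to K^M_3(F)\to K_3(F)\to K^{ind}_3(F)\to 0$, whose graded pieces are $H^3_M(F,\MZ(3))=K^M_3(F)$ (Nesterenko--Suslin) and $H^1_M(F,\MZ(2))=K^{ind}_3(F)$. Two classical facts identify them: by Bass--Tate, $K^M_3(F)\cong(\MZ/2)^{r_1}$, generated by $\{-1,-1,-1\}$-type symbols and detected by the signs at the $r_1$ real places; and by Merkurjev--Suslin together with Levine, $K^{ind}_3(F)_{\mathrm{tors}}\cong H^0_{\mathrm{et}}(F,\MQ/\MZ(2))\cong\MZ/w_2(F)$. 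When $F$ is totally imaginary, $r_1=0$, so $K^M_3(F)=0$ and $K_3(F)\cong\MZ^{r_2}\oplus\MZ/w_2(F)$, as asserted.

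When $F$ has a real place one must pin down the extension $0\to(\MZ/2)^{r_1}\to E\to\MZ/w_2(F)\to 0$, where $K_3(F)\cong\MZ^{r_2}\oplus E$. Off the prime $2$ there is nothing to do, since $K^M_3(F)$ has no odd torsion; the $2$-primary part is the real content. Here I would invoke the Quillen--Lichtenbaum conjecture at $2$ --- a consequence of Voevodsky's proof of the Milnor conjecture, combined with the Rognes--Weibel computation of the $2$-primary $K$-theory of rings of integers --- to compute $K_3(\mathcal{O}_S[1/2];\MZ/2^\nu)$ from mod-$2^\nu$ \'etale cohomology. The decisive point is that for a real field the \'etale cohomology $H^{\ge 2}_{\mathrm{et}}(\mathcal{O}_S[1/2],\MZ/2^\nu(j))$ carries an extra $(\MZ/2)^{r_1}$-summand coming from the real places (ultimately from $H^*(\mathbbm{R},\MZ/2)$); tracking this summand through the Bockstein spectral sequence shows that exactly one of the $r_1$ copies of $\MZ/2$ in $K^M_3(F)$ glues non-trivially onto the $2$-part of $K^{ind}_3(F)$, producing a cyclic group $\MZ/2w_2(F)_{(2)}$, while the remaining $r_1-1$ copies split off as $(\MZ/2)^{r_1-1}$. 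Assembling with the odd part yields $E\cong\MZ/2w_2(F)\oplus(\MZ/2)^{r_1-1}$.

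The main obstacle is precisely this last step: showing the extension is non-split in exactly the stated way, i.e.\ that the symbol $\{-1,-1,-1\}$ becomes divisible by $2$ in $K_3(F)$ modulo the split $(\MZ/2)^{r_1-1}$-summand. That non-triviality is where the deep input --- Voevodsky's Milnor conjecture and the resulting \'etale-descent calculations for real number rings --- is genuinely needed; in special cases one can shortcut it by propagating the known value $K_3(\MZ)\cong\MZ/48$ along the base-change map $K_3(\MZ)\to K_3(\mathcal{O}_F)$. Everything else is bookkeeping with Borel's rank formula, Quillen's theorems on finite fields and finite generation, and the Bass--Tate and Merkurjev--Suslin identifications of the two weight pieces of $K_3$.
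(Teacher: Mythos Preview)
The paper does not prove this theorem at all: it is simply quoted verbatim as ``Thm.\ 0.1'' from Weibel's survey \cite{Wei} and used as a black box to know the abstract isomorphism type of $K_3(F)$ for the number fields treated in Section~6. There is therefore no ``paper's own proof'' to compare against.

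Your sketch is a faithful outline of the argument as it actually appears in \cite{Wei} and the literature it draws on (Quillen on finite fields and finite generation, Borel for the rank, Bass--Tate for $K_3^M$, Merkurjev--Suslin and Levine for the torsion of $K_3^{ind}$, and Rognes--Weibel/Voevodsky for the $2$-primary extension problem in the presence of real places). The only point worth flagging is your localization argument for $K_3(\mathcal{O}_S)\cong K_3(F)$: the relevant input is not just $K_2(k(\mathfrak{p}))=0$ but Soul\'e's vanishing $K_2(\mathcal{O}_S)\otimes\mathbbm{Q}=0$ (equivalently, that the localization cokernel in degree~3 is finite and already absorbed), which is how Weibel phrases it; your formulation via ``vanishing of the relevant transfers'' is a bit imprecise but points at the right mechanism.
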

where the integer $w_2$ is defined as follows: Let $\overline{F}$ be a separable closure of $F$ and $\mathcal{G}=Gal(\overline{F}/F)$ be the absolute Galois group. The abelian group $\mu$ of all roots of unity in $\overline{F}$ is known to be a $\mathcal{G}$-module. We write $\mu(2)$ for the abelian group $\mu$ made into a $\mathcal{G}$-module by letting $g\in\mathcal{G}$ act as $\zeta\mapsto g^2(\zeta)$. In case $F$ is a global or local field, it is proved in \cite[2.3.1]{Wei} that the group $\mu(2)^{\mathcal{G}}$ is a finite group with its order denoted by $w_2(F)$.

As remarked in \cite[p.7]{Wei}, the integer $w_2(F)$ is always divisible by $24$ and can be computed rather easily. In loc. cit. it is remarked that Bass and Tate have shown that $$K_3^M(F)\cong(\MZ/2\MZ)^{r_2}$$ and by Results of Merkurjev and Suslin, one knows that this group injects into the Quillen $K$--group, so that one can in principle compute the indecomposable $K_3$ of a given number field abstractly. As an application of our relations from the last section, we determine explicit generators of the $K$--groups, i.e. higher Chow groups of some number fields.

\begin{rem}
As the referee pointed out, one has to note that the Abel -- Jacobi map as we introduced it is insufficient to capture linear dependence of non-torsion cycles. For this one needs to take $X:=\mathsf{Spec}(F)\otimes_{\mathbbm{Q}}\mathbbm{C}=\coprod_{\sigma:F\hookrightarrow\mathbbm{C}}(\mathsf{Spec}(\mathbbm{C})),$ which is simply given by $r=[F:\mathbbm{Q}]$ points and pull back cycles $\mathcal{Z}$ under each embedding (for the complex ones maybe just one of two conjugate \cite{Hul}) to get a cycle in $CH^3(X,3)=\oplus_\sigma CH^2(\mathbbm{C},3)$ and then apply $-\Phi_{2,3}$ to each of the (pulled back) cycles to get a vector in $(\mathbbm{C}/\mathbbm{Z}(2))^{\oplus r}$. It may at least be conjectured that the map $CH^2(F,3)\to (\mathbbm{C}/\mathbbm{Z}(2))^{\oplus r}$ is injective.
\end{rem}

\subsection{$CH^2(\MQ,3)$:} By the work of Lee and Szczarba (cf. \cite{LS:76}), we know that $CH^2(\mathbbm{Q},3) \cong K_3^{ind}(\mathbbm{Q})\cong (\mathbbm{Z} / 48\mathbbm{Z}) / (\mathbbm{Z}/2\mathbbm{Z})\cong
\mathbbm{Z}/ 24\mathbbm{Z}$.

From the distribution relation \eqref{eq:rel2} for $n=2$ we know $2C_1= -4C_{-1}\in CH^2(\MQ,3)$. We now use a specialization of the inversion relation to obtain a second relation between
$C_1$ and $C_{-1}$: We set $b=-a$. Noting
that then for each five-term relation the last two extra terms are $2$-torsion, we use twice the inversion relation from proposition \ref{prop:inv} and forget
these terms at once:
\begin{equation}
\label{eq:inv3}
\begin{split}
8&C_{-1} -8C_1= \\
&= 2\left(Z(-a,\frac{a}{a-1}) +Z\left(-\frac{1}{a},\frac{1}{1-a}\right) +Z\left(\frac{1}{a},\frac{1}{1+a}\right) + Z\left(a,\frac{a}{1+a}\right)\right. \\
 &\quad +Z\left(-\frac{1}{a},1+\frac{1}{a}\right)+Z\left(\frac{1}{a},1-\frac{1}{a}\right)+Z\left(-a,1+a\right)+Z\left(a,1-a\right)\\
 &\quad -\left[\frac{z-\frac{2a}{1+a}}{z-\frac{2}{1+a}},z,\frac{1-a}{1+a}\right]
 -\left[\frac{z-\frac{2}{1+a}}{z-\frac{2a}{1+a}},z,\frac{1-a}{1+a}\right] \\
 &\quad\left. -\left[\frac{z-\frac{2a}{a-1}}{z-\frac{2}{1-a}},z,\frac{1+a}{1-a}\right] -\left[\frac{z-\frac{2}{1-a}}{z-\frac{2a}{a-1}},z,\frac{1+a}{1-a}\right]\right).
\end{split}
\end{equation}
Since the admissible cycle $\left[\frac{z-1}{z-\frac{x-a}{x-b}},z,x,c \right]\in C^2(F,4)$ bounds to $$\left[\frac{x-a}{x-b},x,c \right]+\left[\frac{x-b}{x-a},x,c \right] - \left[\frac{x-1}{x-\frac{a}{b}},x,c \right]=0\in \quotient,$$ we can further simplify:
\begin{equation}
\label{eq:stuck}
\begin{split}
8&C_{-1} - 8C_1 = \\
\quad &=2\left(Z\left(-a,\frac{a}{a-1}\right) +Z\left(-\frac{1}{a},\frac{1}{1-a}\right) +Z\left(\frac{1}{a},\frac{1}{1+a}\right) + Z\left(a,\frac{a}{1+a}\right)\right. \\
 &\qquad +Z\left(-\frac{1}{a},1+\frac{1}{a}\right)+Z\left(\frac{1}{a},1-\frac{1}{a}\right)+Z\left(-a,1+a\right)+Z\left(a,1-a\right)\\
 &\qquad\left. +\left[\frac{z-1}{z-a},z,1+a\right] -\left[\frac{z-1}{z+a},z,1-a\right]\right).
\end{split}
\end{equation}

Unfortunately we do not have any information about the orders of the terms on the right-hand side in general. But we have a freedom of choice for the parameter $a$. If we chose the constants on the right-hand side of the extra terms resp. one of the constants in the $Z$--terms to be a root of unity, we could control the order of the extra terms. In order to be able to choose suitable constants, the following theorem due to Levine helps:

\begin{prop}[\cite{Lev89}, cor. 4.6]
\label{prop:levine}
Let $E$ be an arbitrary field and $F$ an extension of $E$. Then the map $K_3^{ind}(E) \rightarrow K_3^{ind}(F)$ induced by the inclusion $E \hookrightarrow F$ is injective.
\end{prop}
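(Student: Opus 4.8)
The plan is to split a general field extension $E\subseteq F$ into a purely transcendental part and an algebraic part and to treat each separately. Since $K_3$, $K_3^M$, and hence $K_3^{ind}=K_3/\mathrm{im}(K_3^M)$ commute with filtered colimits of fields, and since a class of $K_3^{ind}(E)$ that vanishes in a filtered colimit already vanishes at a finite stage, it is enough to prove injectivity when $F/E$ is purely transcendental and when $F/E$ is finite. Writing $E(t_1,\ldots,t_n)=E(t_1,\ldots,t_{n-1})(t_n)$ and taking colimits again, the transcendental case reduces to $F=E(t)$; passing through a tower of intermediate fields reduces the finite case to prime degree $\ell$.

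For $F=E(t)$ I would use the Quillen localization sequence for $\mathbbm{A}^1_E$ together with its Milnor $K$-theory analogue (Milnor, Bass--Tate). These furnish compatible split short exact sequences
$$0\to K_n(E)\to K_n(E(t))\to\bigoplus_x K_{n-1}(E(x))\to 0,\qquad 0\to K_n^M(E)\to K_n^M(E(t))\to\bigoplus_x K_{n-1}^M(E(x))\to 0,$$
the sums running over closed points $x$ of $\mathbbm{A}^1_E$ and the left-hand maps being restriction along $E\hookrightarrow E(t)$; in particular restriction is injective on $K_3$ and on $K_3^M$. Chasing the resulting commutative diagram in degree $n=3$, and using Matsumoto's theorem $K_2^M(k)=K_2(k)$ (so that $\bigoplus_x K_2^M(E(x))\hookrightarrow\bigoplus_x K_2(E(x))$), one shows that a class $a\in K_3(E)$ whose restriction to $E(t)$ lies in the image of $K_3^M(E(t))$ already has $a$ itself in the image of $K_3^M(E)$; hence $K_3^{ind}(E)\to K_3^{ind}(E(t))$ is injective.

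For $F/E$ finite of prime degree $\ell$ I would begin with the transfer $N_{F/E}\colon K_3^{ind}(F)\to K_3^{ind}(E)$ satisfying $N_{F/E}\circ\mathrm{res}=\ell\cdot\mathrm{id}$, so that $\ker(\mathrm{res})$ is annihilated by $\ell$ and only the $\ell$-torsion of $K_3^{ind}(E)$ remains to be controlled. When $\ell$ is prime to the characteristic I would combine Merkurjev--Suslin's identification of $K_3$ with $\MZ/\ell$-coefficients with weight-two étale cohomology and Suslin's exact sequence expressing $K_3^{ind}(E)$, up to a small group built from the roots of unity $\mu_E$, in terms of the Bloch group $B(E)$: the roots-of-unity contribution injects trivially, and for the Bloch-group part one runs a norm/descent argument over a Galois closure of $F/E$. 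The remaining case $\ell=\mathrm{char}(E)$, i.e. $F=E(a^{1/\ell})$ purely inseparable, follows from a Frobenius-and-transfer argument showing that $K_3^{ind}$ is unchanged by purely inseparable extensions. I expect this finite-extension step — in particular isolating the $\ell$-torsion of $K_3^{ind}(E)$ and pinning down its behaviour under $\mathrm{res}$ — to be the main obstacle; the purely transcendental case and the reduction steps are essentially formal.
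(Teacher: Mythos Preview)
The paper does not supply its own proof of this proposition: it is quoted as Corollary~4.6 of Levine's article \cite{Lev89} and used throughout as a black box. There is therefore nothing in the present paper to compare your argument against.

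On the substance of your sketch: the reduction via filtered colimits to the two cases $F=E(t)$ and $[F:E]=\ell$ prime is standard and correct, and the purely transcendental step via the compatible Quillen and Milnor--Bass--Tate localization sequences (together with Matsumoto's theorem in degree~$2$) goes through essentially as you describe. The finite-extension step, however, is only an inventory of ingredients rather than an argument: ``run a norm/descent argument over a Galois closure'' is precisely where the content lies, and you yourself flag it as the main obstacle. Levine's actual route in \cite{Lev89} is different in spirit: he first proves structural results identifying $K_3^{ind}(F)$ (via $H_3$ of $SL_2(F)$ and the Bloch--Suslin complex), and injectivity under field extensions is then read off from that description rather than established by a direct transfer/descent argument of the kind you propose. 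If you wanted to complete your version you would have to make the $\ell$-torsion analysis precise, at which point you are effectively redoing parts of the Suslin--Levine theory; it is cleaner simply to cite \cite{Lev89}, as the paper does.
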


Since we know that $K_3^{ind}(E) \cong CH^2(E,3)$, this is the result can be applied in the following way: Returning to equation \eqref{eq:stuck}, we specialize further by setting $a=i:=\sqrt{-1}$ to bound the order of $C_1\in CH^2(\mathbbm{Q}(i),3)$. This bound cannot be higher in $CH^2(\mathbbm{Q},3)$ because of Levine's result quoted above on the injectivity of the map between the corresponding Chow groups.
\begin{align*}
8C_{-1} -8C_1 &= 2Z\left(i,\frac{1+i}{2}\right)+2Z\left(-i,\frac{1-i}{2}\right)
+2Z\left(-i,1+i\right) + 2Z\left(i,1-i\right)\\
&\qquad +2\left[\frac{z-1}{z-i},z,1+i\right] -2\left[\frac{z-1}{z+i},z,1-i\right].
\intertext{As terms of the form
$[-,-,\zeta]$ for a primitive $n^{th}$ root of unity are
$n$-torsion, only the following terms survive multiplication by $2$, while the last two terms are equal to $2[\frac{z-1}{z-i},z,\frac{1+i}{1-i}]$:}
16C_{-1} -16C_1 &= 4Z\left(i,\frac{1+i}{2}\right)+4Z\left(-i,\frac{1-i}{2}\right)
+4Z\left(-i,1+i\right) + 4Z\left(i,1-i\right). \\
\intertext{Now we make use of \eqref{eq:zs} several times to
obtain:}
 &= 4Z(i,1) + 4Z(-i,1) = 0.
\end{align*}
So if we combine Levine's result with the relation $2C_1 = -4C_{-1}$ coming from the distribution relation, we immediately see that
$$24C_1 = 48C_{-1} = 0 \in CH^2(\mathbbm{Q}(i),3),$$ and because of the injectivity proved in Levine's
theorem, it is clear that $24C_1=0$ in $CH^2(\MQ,3)$ as well, hence the order of $C_1$ divides $24$. On the contrary, to see that the order is divisible by $24$, apply $-\Phi_{2,3}$ to get $Li_2(1)=\frac{\pi^2}{6}$. Since we know that the order of this group is exactly $24$, this makes $C_1$ a generator of $CH^2(\mathbbm{Q},3)$. Thus we can prove:
\begin{prop}
 The group $CH^2(\mathbbm{Q},3)\cong\mathbbm{Z} / 24\mathbbm{Z}$ is generated by the cycle $C_1\in CH^2(\mathbbm{Q},3)$. % subject to the distribution relations \eqref{eq:rel2}, the five-term relation \eqref{eq:five}, and relation \eqref{eq:rel1}.
\end{prop}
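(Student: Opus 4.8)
The plan is to combine the two independent relations between $C_1$ and $C_{-1}$ already derived in the surrounding text with the explicit value of the regulator, exploiting Levine's injectivity result (Proposition \ref{prop:levine}) to transfer a bound obtained over $\mathbbm{Q}(i)$ back to $\mathbbm{Q}$. Concretely, I would first recall from the distribution relation \eqref{eq:rel2} specialized to $n=2$ that $2C_1 = -4C_{-1}$ in $CH^2(\mathbbm{Q},3)$, and hence also in $CH^2(\mathbbm{Q}(i),3)$ after pulling back along $\mathbbm{Q}\hookrightarrow\mathbbm{Q}(i)$. Second, I would run the specialization of the inversion relation of Proposition \ref{prop:inv} at $b=-a$ and then at $a=i$, exactly as in the displayed computation above, to arrive at $16 C_{-1} - 16 C_1 = 4Z(i,\tfrac{1+i}{2}) + 4Z(-i,\tfrac{1-i}{2}) + 4Z(-i,1+i) + 4Z(i,1-i)$, and then apply the multiplicativity \eqref{eq:zs} of the $Z$-terms repeatedly to collapse the right-hand side to $4Z(i,1) + 4Z(-i,1) = 0$, since $Z(\cdot,1)$ vanishes (it is a term with two constant coordinates, negligible in $d^2(F,3)$).

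From these two relations $2C_1 = -4C_{-1}$ and $16C_{-1} = 16C_1$ in $CH^2(\mathbbm{Q}(i),3)$, a little elimination gives $24 C_1 = 0$: indeed $16 C_1 = 16 C_{-1}$ and $2C_1 = -4C_{-1}$ together force $C_1$ to be annihilated by $\gcd$-type manipulation — multiply $2C_1 = -4C_{-1}$ by $8$ to get $16C_1 = -32C_{-1}$, compare with $16C_1 = 16C_{-1}$ to get $48 C_{-1} = 0$, then $2C_1 = -4C_{-1}$ gives $24 C_1 = -48 C_{-1} = 0$. So the order of $C_1$ divides $24$ in $CH^2(\mathbbm{Q}(i),3)$, and by Proposition \ref{prop:levine} the pullback map $CH^2(\mathbbm{Q},3)\to CH^2(\mathbbm{Q}(i),3)$ is injective, whence $24 C_1 = 0$ already in $CH^2(\mathbbm{Q},3)$.

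For the reverse divisibility I would invoke the regulator: applying $-\Phi_{2,3}$ to $C_1$ gives $Li_2(1) = \zeta(2) = \pi^2/6 \in \mathbbm{C}/\mathbbm{Z}(2)$, which is the standard generator of the image; since $CH^2(\mathbbm{Q},3)\cong\mathbbm{Z}/24\mathbbm{Z}$ by Lee--Szczarba and the Abel--Jacobi map is well-defined (so nontriviality in Deligne cohomology forces nontriviality of the class and moreover pins down its order from below), the order of $C_1$ is divisible by $24$. Combining the two bounds, $C_1$ has order exactly $24$ and therefore generates $CH^2(\mathbbm{Q},3)\cong\mathbbm{Z}/24\mathbbm{Z}$.

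I expect the only genuinely delicate point to be the bookkeeping in the second step — verifying that, after setting $b=-a$ and $a=i$, every ``extra'' monodromy term $[\tfrac{z-1}{z-*},z,*]$ and every $[\tfrac{z-\ast}{z-\ast},z,\zeta]$ really is $2$-torsion (being killed upon multiplication by $2$, using that the right-hand coordinate is a root of unity and Corollary 5.5 on the $2$-torsion of such terms), so that multiplying the relation by $2$ leaves only the four $Z$-terms. Everything else — the distribution relation, \eqref{eq:zs}, Levine's theorem, and the regulator computation — is either quoted verbatim from earlier in the paper or a one-line consequence, so the argument is essentially assembling pieces already in hand rather than proving anything new.
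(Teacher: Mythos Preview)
Your proposal is correct and follows the paper's own argument essentially verbatim: the distribution relation $2C_1=-4C_{-1}$, the specialization of the inversion relation at $b=-a$, $a=i$ to obtain $16C_{-1}=16C_1$ in $CH^2(\mathbbm{Q}(i),3)$, Levine's injectivity to descend the bound $24C_1=0$ to $\mathbbm{Q}$, and the regulator value $Li_2(1)=\pi^2/6$ for the lower bound. One small slip: $Z(\cdot,1)$ vanishes not because it has ``two constant coordinates'' but because its middle coordinate is the constant $1$, which lies outside $\Box^3=(\mathbbm{P}^1\setminus\{1\})^3$, so the cycle is empty --- the conclusion is the same.
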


\subsection{$CH^2(\MQ(i),3)$:} From Weibel's article one obtains that $CH^2(\mathbbm{Q}(i),3)\cong\mathbbm{Z}\oplus\mathbbm{Z} / 24\mathbbm{Z}$. Invoking the fact that $C_1\in CH^2(\MQ(i),3)$ still has order 24 by computing the image under the Abel -- Jacobi map, we immediately see that $C_1$ also generates the torsion part of the higher Chow group of $\MQ(i)$:

% have to check, whether there are new cycles defined over $\MQ(i)$ being rationally equivalent to a multiple of $C_1$. Looking at our relations, these ones can only be found via the distribution relation since all of the other relations at most help finding cycles which are multiples by $C_1$. So let us look at
%\begin{align*}
%4C_1 &= 16C_1 + 16C_{-1} + 16C_i + 16C_{-i}\\
%-4C_1 &=16C_i + 16C_{-i}\\
%0 &=48(C_i + C_{-i}).
%\end{align*}
%Since we already observed that $\partial(C_i) = (i,1-i)$ so that not $C_i$ but $4C_i\in CH^2(\mathbbm{Q},3)$, it is apparent that the cycle $4(C_i + C_{-i})$ is $12$-torsion in the Chow group. In other words, we cannot divide $C_1$ and have the following result:
\begin{prop}
 The torsion part of the group $CH^2(\mathbbm{Q}(i),3)\cong\mathbbm{Z}\oplus\mathbbm{Z} / 24\mathbbm{Z}$ is generated by the cycle $C_1\in CH^2(\mathbbm{Q}(i),3)$.% subject to the distribution relations \eqref{eq:rel2}, the five-term relation \eqref{eq:five}, and relation \eqref{eq:rel1}.

The free part is generated by $4C_i$ or equivalently $4C_{-i}$.
\end{prop}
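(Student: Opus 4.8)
The claim has two halves. For the torsion part I would invoke the previous subsection, where $C_1$ was shown to generate $CH^2(\mathbbm{Q},3)\cong\mathbbm{Z}/24\mathbbm{Z}$. By Levine's injectivity result (Proposition~\ref{prop:levine}) and functoriality, the image of $C_1$ under $CH^2(\mathbbm{Q},3)\hookrightarrow CH^2(\mathbbm{Q}(i),3)$ — which I again call $C_1$ — is a torsion class of order exactly $24$; this is also visible directly, since $-\Phi_{2,3}(\sigma^*C_1)=Li_2(1)=\pi^2/6$ has order $24$ in $\mathbbm{C}/\mathbbm{Z}(2)$. As Weibel's theorem gives $CH^2(\mathbbm{Q}(i),3)_{\mathrm{tors}}\cong\mathbbm{Z}/24\mathbbm{Z}$, a torsion class of order $24$ must generate it, so $C_1$ generates the torsion part.

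For the free part I would proceed in three steps. First, check that $4C_i$ (and symmetrically $4C_{-i}$) actually defines a class in $CH^2(\mathbbm{Q}(i),3)$: since $\partial C_a=(1-a,a)$, we get $\partial(4C_i)=4\,(1-i,i)$, and because $i$ is a primitive fourth root of unity the divisor $4[i]$ is principal, mapping to $i^4=1$ in $CH^1(\mathbbm{Q}(i),1)\cong\mathbbm{Q}(i)^\times$; hence $4\,(1-i,i)$ lies in the acyclic subcomplex $Z'(\mathbbm{Q}(i),\bullet)$ divided out in Section~4, so $\partial(4C_i)=0$ in $C^2(\mathbbm{Q}(i),2)$. (This is exactly why one must pass to the multiple $4C_i$ and cannot use $C_i$ or $2C_i$, whose boundaries do not become principal.) Second, $4C_i$ has infinite order: applying $-\Phi_{2,3}\circ\sigma^*$ with $\sigma\colon i\mapsto i$ yields $4\,Li_2(i)\in\mathbbm{C}/\mathbbm{Z}(2)$, and since $\mathbbm{Z}(2)=(2\pi i)^2\mathbbm{Z}$ is real, the imaginary part $\mathrm{Im}\,4\,Li_2(i)=4G$ ($G$ being Catalan's constant) is a well-defined nonzero real number, as is $\mathrm{Im}(n\cdot 4\,Li_2(i))=4nG$ for every $n\geq1$. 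So $4C_i$ generates a subgroup of finite index in $CH^2(\mathbbm{Q}(i),3)/\mathrm{tors}\cong\mathbbm{Z}$.

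The third step — that this index is $1$ — is the main obstacle. Writing $4C_i=m\,y+t\,C_1$ with $y$ a generator of $CH^2(\mathbbm{Q}(i),3)/\mathrm{tors}$ and $m\geq1$, the regulator by itself only shows $m$ is finite; to pin it down I would compare with the covolume of the Beilinson--Borel regulator lattice on $K^{ind}_3(\mathbbm{Q}(i))\cong CH^2(\mathbbm{Q}(i),3)$, which, via Borel's theorem together with the Lichtenbaum conjecture for the abelian field $\mathbbm{Q}(i)$ (a theorem in this case), is determined by $\zeta_{\mathbbm{Q}(i)}(2)=\zeta(2)L(2,\chi_{-4})=\tfrac{\pi^2}{6}\,G$; unwinding the normalizations, the lattice generator has imaginary regulator $\pm4G$, forcing $m=1$. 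Equivalently, one may use Suslin's isomorphism $CH^2(\mathbbm{Q}(i),3)\cong K^{ind}_3(\mathbbm{Q}(i))$ and the known structure of the Bloch group, noting that $4[i]\in B_2(\mathbbm{Q}(i))$ — it lies in the Bloch group precisely because $i^4=1$ — is a generator modulo torsion and that $\overline{\rho}_2$ carries it to $4C_i$. Either way, this passage from ``finite index'' to ``index~$1$'' is the only non-formal point. Finally, complex conjugation $\tau\colon i\mapsto-i$ fixes $C_1$, sends $C_i$ to $C_{-i}$, and acts by $-1$ on the free part — since $Li_2(\bar z)=\overline{Li_2(z)}$ reverses the sign of the imaginary part of the regulator, through which the free part embeds — so $4C_{-i}\equiv-4C_i$ modulo torsion; this matches the distribution relation \eqref{eq:rel2} for $n=2$, $a=i$, which reads $2C_{-1}=4C_i+4C_{-i}+(\text{$2$-torsion})$ with $2C_{-1}$ torsion (because $4C_{-1}=-2C_1$ is), and shows $4C_{-i}$ is equally a generator of the free part.
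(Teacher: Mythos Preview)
Your argument for the torsion part, and for showing that $4C_i$ is a well-defined non-torsion class, is essentially the paper's own proof: the paper also invokes the order-$24$ result for $C_1$ from the preceding subsection, checks that $\partial C_i=(1-i,i)$ with $4(i)\in\partial Z^1(\mathbbm{Q}(i),2)$ so that $4C_i$ (rather than $C_i$) lies in $CH^2(\mathbbm{Q}(i),3)$, and observes that the Abel--Jacobi image of $4C_i$ has nonzero imaginary part.

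Where you diverge from the paper is exactly at your ``third step''. You correctly flag that non-vanishing of the regulator only gives \emph{finite index} in the free part, and you attempt to close the gap via the Borel regulator covolume (through the Lichtenbaum conjecture for the abelian field $\mathbbm{Q}(i)$) or via the known structure of the Bloch group. The paper's proof simply does not address this point: it stops at ``nontorsion'' and asserts the result, and indeed Remark~6.2 concedes that the Abel--Jacobi map as set up cannot by itself detect linear dependence among non-torsion cycles. So you have been more scrupulous here than the paper. That said, your proposed fixes, while pointing in the right direction, are sketches rather than proofs: the Lichtenbaum route requires a careful normalization of the regulator comparison (the passage from $\zeta_{\mathbbm{Q}(i)}(2)$ to the statement that the lattice generator has imaginary regulator exactly $\pm 4G$ hides several constants), and the Bloch-group route presupposes that $4[i]$ generates $B_2(\mathbbm{Q}(i))$ modulo torsion and that $\overline{\rho}_2$ is injective on the free part, each a nontrivial external input. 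Your final paragraph on complex conjugation and the distribution-relation consistency check is a pleasant addition not present in the paper.
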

\begin{proof}
 The first part is clear and the second assertion follows from the fact that $\partial(C_i) = (1-i,i)$ and that $4(i)\in \partial Z^1(\mathbbm{Q}(i),2)$ so that not $C_i$ but $4C_i\in CH^2(\mathbbm{Q},3)$ and further that the image of $4C_i$ under the Abel -- Jacobi map has a non-vanishing imaginary part indicating that $4C_i$ is nontorsion in the Chow group. The same reasoning applies to $C_{-i}$.
\end{proof}

\begin{rem}
 The group $CH^2(\mathbbm{Q}(\zeta_3),3)\cong\mathbbm{Z}^2\oplus \mathbbm{Z} / 24\mathbbm{Z}$ is treated analogously: Since an generator of the torsion part of the $K$--group needs to be exactly $24$-torsion, and since a similar calculation as above shows that the cycle $3(C_\zeta + C_{\bar\zeta})$ is $12$-torsion, $C_1$ can be the only generator of the torsion part of $CH^2(\mathbbm{Q}(\zeta_3),3)$.

The free part of the Chow group is generated by $3C_{\zeta_3}$ resp. $3C_{\zeta^2_3}$ as can be seen by computing the Abel -- Jacobi map again.
\end{rem}

\begin{rem}
 Consulting Weibel's article again, esp. propositions 2.7 and 2.8, one can compute the numbers $w_2(F)$ for number fields rather easily and one can also see that the number very often is equal to 24. The theorem of Levine \ref{prop:levine} combined with the Abel -- Jacobi map of Kerr, Lewis and M\"uller-Stach implies that whenever the torsion part of $CH^2(F,3)$ for a number field $F$ has order 24, it is generated by $C_1$.
\end{rem}

\subsection{$CH^2(\MQ(\zeta_5),3)$:} A number field with a more interesting Chow group is treated in this example: We are looking for a generator of the torsion part of the second higher Chow group with order equal to $120$. We use the distribution relation for the fifth roots of unity:
\begin{align*}
 5C_1 &= 25C_1 + 25C_{\zeta_5} + 25C_{\zeta^2_5} +25C_{\bar\zeta^2_5} + 25C_{\bar\zeta_5}\\
-20C_1 &= 25C_{\zeta_5} + 25C_{\zeta^2_5} +25C_{\bar\zeta^2_5} + 25C_{\bar\zeta_5}\\
-150(C_{\zeta_5} + C_{\bar\zeta_5}) &=150(C_{\zeta^2_5} + C_{\bar\zeta^2_5}).
\end{align*}
Now, we use the inversion relation from proposition \ref{prop:inv} with $a=\zeta^3_5, b=\zeta^2_5$: We can already simplify the relation a little: Terms with a fifth root of unity in the right coordinate are $5$-torsion, while terms of the form $\left[\frac{z-1}{z-a},z,1-a\right]$ are $2$-torsion. Note also that $(\frac{1-\zeta_5^3}{1-\zeta_5^2})^{10} = 1$. So we multiply the whole relation by $10$ and note that $\bar\zeta^2_5=\zeta^3_5$ to get
\begin{align*}
&20 C_{\zeta_5} + 20C_{\bar\zeta_5} - 16C_1 = 10\left(2Z(\zeta^3_5,1-\zeta^3_5)+2Z\left(\zeta^2_5,1-\zeta^2_5\right)\right. \\
 &\left.\quad -Z\left(\zeta^3_5,\frac{1}{1-\zeta^3_5}\right) - Z\left(\zeta^2_5,\frac{\zeta^3_5}{\zeta^3_5-1}\right) - Z\left(\zeta^2_5,\frac{1}{1-\zeta^2_5}\right) - Z\left(\zeta^3_5,\frac{\zeta^2_5}{\zeta^2_5-1}\right)\right).
\end{align*}
\begin{rem}
 Note that we have already settled the admissibility of $Z(a,b)$ for arbitrary $a, b\in\Fs$. So one does not need to care when specializing to some primitive roots of unity.
\end{rem}
Now one applies equation \eqref{eq:zs} several times and keeps in mind that terms with a $n^{th}$ root of unity in the rightmost coordinate are $n$-torsion. In particular, e.g. $5Z\left(\zeta^2_5,\frac{1}{1-\zeta^2_5}\right)=-5Z\left(\zeta^2_5,1-\zeta^2_5\right)$ :
\begin{align*}
&20 C_{\zeta_5} + 20C_{\bar\zeta_5} - 16C_1 = 10\left(3Z(\zeta^3_5,1-\zeta^3_5)+3Z\left(\zeta^2_5,1-\zeta^2_5\right)\right. \\
 &\left.\quad - Z\left(\zeta^2_5,\frac{\zeta^3_5}{\zeta^3_5-1}\right) -  Z\left(\zeta^3_5,\frac{\zeta^2_5}{\zeta^2_5-1}\right)\right).
\intertext{Now, again from \eqref{eq:zs} and from $\zeta^2_5=\frac{1}{\zeta^3_5}$ we see that $5Z\left(\zeta^2_5,\frac{\zeta^3_5}{\zeta^3_5-1}\right))=-5Z\left(\zeta^2_5,1-\zeta^2_5\right)):$}
 &20 C_{\zeta_5} + 20C_{\bar\zeta_5} - 16C_1 =40Z(\zeta^3_5,1-\zeta^3_5)+40Z\left(\zeta^2_5,1-\zeta^2_5\right).
\intertext{Now,}
&40Z(\zeta^2_5,1-\zeta^2_5)+40Z\left(\zeta^3_5,1-\zeta^3_5\right) = 40Z(\zeta^2_5,1-\zeta^2_5)+40Z\left(\zeta^3_5,\frac{\zeta_5^2-1}{\zeta^2_5}\right)\\
&\qquad = 40Z(\zeta^2_5,1-\zeta^2_5)+40Z\left(\zeta^3_5,1-\zeta_5^2\right)+40Z(\zeta^3_5,\zeta_{10})\\
&\qquad = 40Z(\zeta^3_5,\zeta_{10}).
\end{align*}
Now, we may assume both arguments of this term to be fifth roots of unity because of the even factor in front. So, finally we assure $25Z(\zeta'_5,\zeta_5)=0\in C^2(\mathbbm{Q}(\zeta_5),3)/\partial C^2(\mathbbm{Q}(\zeta_5),4)$ for two fifth roots of unity $\zeta_5,\zeta'_5$: From proposition \ref{prop:rules1}, equation (ii), we see that $$Z(\zeta'_5,\zeta_5^2)= 2Z(\zeta'_5,\zeta_5)+\left[\frac{z-\zeta_5^2}{z-\zeta_5},z,\zeta'_5\right].$$ Since the last term on the right hand side is $5$-torsion, we conclude by induction
$$0=Z(\zeta'_5,1)=5Z(\zeta'_5,\zeta_5)+ 5-\text{torsion} \Longleftrightarrow 0=25Z(\zeta'_5,\zeta_5).$$ As the least common multiple of $25$ and $40$ is $300$, we deduce from our computations above:
$$300 C_{\zeta_5} + 300C_{\bar\zeta_5} - 240C_1=0\in CH^2(\mathbbm{Q}(\zeta_5),3),$$
but by proposition \ref{prop:levine}, $24C_1=0\in CH^2(\mathbbm{Q}(\zeta_5),3)$. These two results together with the relation $150(C_{\zeta_5}+C_{\bar\zeta_5}) = -150(C_{\zeta^2_5}+C_{\bar\zeta^2_5}) \in C^2(\mathbbm{Q}(\zeta_5),3)/\partial C^2(\mathbbm{Q}(\zeta_5),4)$ from the distribution relation give us in the end $$-300(C_{\zeta_5}+C_{\bar\zeta_5}) = 300(C_{\zeta^2_5}+C_{\bar\zeta^2_5}) = 0.$$ Remembering that $C_{\zeta_5}\notin CH^2(\mathbbm{Q}(\zeta_5),3)$, but $5C_{\zeta_5}\in CH^2(\mathbbm{Q}(\zeta_5),3)$, we can deduce that the higher Chow cycle $5(C_{\zeta_5}+C_{\bar\zeta_5})$ or equivalently $5(C_{\zeta^2_5}+C_{\bar\zeta^2_5})$ is $60$-torsion.

Using the Abel-Jacobi map of section 3, we compute the image of this cycle in Deligne cohomology $H^1_{\mathcal{D}}(\mathsf{Spec}(\mathbbm{C})^{an},\mathbbm{Z}(2))\cong \mathbbm{C}/\mathbbm{Z}(2) \cong \mathbbm{C} / 4\pi^2\mathbbm{Z}$ to be $\pi^2/15$:
$$-\Phi_{2,3}(5(C_{\zeta_5}+C_{\overline{\zeta}_5}))=5(Li_2(\zeta_5)+Li_2(\frac{1}{\zeta_5}))=5(-\frac{\pi^2}{6}-\frac{1}{2}\log(-\zeta_5)^2)=\frac{\pi^2}{15},$$ so that the order of this cycle is exactly $60$. Remembering that $C_1\in CH^2(\mathbbm{Q}(\zeta_5),3)$ is of order $24$, we deduce analogously to the former examples:

\begin{prop}
 The torsion part of the group $CH^2(\mathbbm{Q}(\zeta_5),3)\cong\mathbbm{Z}^2\oplus \mathbbm{Z} / 120\mathbbm{Z}$ is generated by the cycle $C_1+5(C_{\zeta_5}+C_{\bar\zeta_5}) \in CH^2(\mathbbm{Q}(\zeta_5),3)$. % subject to the distribution relations \eqref{eq:rel2}, the five-term relation \eqref{eq:five}, and relation \eqref{eq:rel1}.

%The free part is generated by $5C_{\zeta_5}$ and $5C_{\zeta_5^4}$.
\end{prop}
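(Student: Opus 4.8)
The plan is to reduce the statement to the single claim that the class $x:=C_1+5(C_{\zeta_5}+C_{\bar\zeta_5})\in CH^2(\mathbbm{Q}(\zeta_5),3)$ has order $120$, and then to verify that claim with the regulator. Indeed, by the computations preceding this proposition we know that $C_1$ has order exactly $24$ and that $5(C_{\zeta_5}+C_{\bar\zeta_5})$ is exactly $60$-torsion; hence both summands, and therefore $x$, lie in the torsion subgroup of $CH^2(\mathbbm{Q}(\zeta_5),3)$, which by Weibel's theorem is cyclic of order $120$. Thus $\mathrm{ord}(x)$ divides $120$, and it suffices to prove that $120$ divides $\mathrm{ord}(x)$.

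For the divisibility I would invoke the generalized Abel--Jacobi map. By the well-definedness established earlier, $-\Phi_{2,3}\circ\sigma^{*}$ is a homomorphism $CH^2(\mathbbm{Q}(\zeta_5),3)\to \mathbbm{C}/\mathbbm{Z}(2)\cong \mathbbm{C}/4\pi^2\mathbbm{Z}$, and from the values already computed one has $-\Phi_{2,3}(C_1)=Li_2(1)=\pi^2/6$ and $-\Phi_{2,3}(5(C_{\zeta_5}+C_{\bar\zeta_5}))=\pi^2/15$, so
\[
-\Phi_{2,3}(x)=\frac{\pi^2}{6}+\frac{\pi^2}{15}=\frac{7\pi^2}{30}\in \mathbbm{C}/4\pi^2\mathbbm{Z}.
\]
Since $\gcd(7,120)=1$, the element $\tfrac{7\pi^2}{30}$ has order exactly $120$ in $\mathbbm{C}/4\pi^2\mathbbm{Z}$; because $\Phi_{2,3}$ is a homomorphism this forces $120\mid\mathrm{ord}(x)$, and combined with $\mathrm{ord}(x)\mid 120$ we obtain $\mathrm{ord}(x)=120$. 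Hence $x$ generates the torsion subgroup $\mathbbm{Z}/120\mathbbm{Z}$ of $CH^2(\mathbbm{Q}(\zeta_5),3)$, which is the assertion.

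The step that genuinely carries the argument is this regulator computation, and it is needed rather than merely confirmatory. Decomposing $\mathbbm{Z}/120\mathbbm{Z}\cong\mathbbm{Z}/8\mathbbm{Z}\oplus\mathbbm{Z}/3\mathbbm{Z}\oplus\mathbbm{Z}/5\mathbbm{Z}$, a class of order $24$ is a generator of the first two summands and vanishes in the third, while a class of order $60$ is twice a generator of $\mathbbm{Z}/8\mathbbm{Z}$ and a generator of each of $\mathbbm{Z}/3\mathbbm{Z}$ and $\mathbbm{Z}/5\mathbbm{Z}$; so the order bounds alone already force $x$ to have order $8$ in the $\mathbbm{Z}/8\mathbbm{Z}$-part and $5$ in the $\mathbbm{Z}/5\mathbbm{Z}$-part, but they leave open whether the $\mathbbm{Z}/3\mathbbm{Z}$-contributions of $C_1$ and of $5(C_{\zeta_5}+C_{\bar\zeta_5})$ reinforce or cancel, i.e.\ whether $\mathrm{ord}(x)$ equals $120$ or only $40$. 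The value $7\pi^2/30$ settles precisely this $3$-primary ambiguity. I do not anticipate any further difficulty, since the two preparatory order estimates --- resting on the distribution relation~\eqref{eq:rel2}, the inversion relation of Proposition~\ref{prop:inv}, Levine's injectivity (Proposition~\ref{prop:levine}), and Weibel's structure theorem --- are already in place.
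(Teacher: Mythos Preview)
Your argument is correct and is precisely the completion the paper leaves implicit with its phrase ``we deduce analogously to the former examples.'' The paper establishes separately that $C_1$ has order $24$ and that $5(C_{\zeta_5}+C_{\bar\zeta_5})$ has order $60$ (via the regulator value $\pi^2/15$), and then simply asserts the proposition; your explicit computation $-\Phi_{2,3}(x)=7\pi^2/30$ together with $\gcd(7,120)=1$ is exactly the missing step, and your remark that the order data alone leave the $3$-primary component undetermined (order $40$ versus $120$) is a genuine observation that the paper does not make explicit.
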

%\begin{rem}
% There is a choice of the generators of the free part of the Chow group: Note that by the inversion relation we can also choose $5C_{\zeta^3_5}$ and $5C_{\zeta^2_5}$ as generators. It is important that there are no relations among the corresponding Totaro curves. Note also remark 6.2.
%\end{rem}

\begin{rem}
 By class field theory one knows, that $\mathbbm{Q}(\sqrt{5}) \hookrightarrow \mathbbm{Q}(\zeta_5)$. One may ask for a generator of the higher Chow group of the former field coming from a generator of the higher Chow group of the latter field: One knows by the theorem of Levine that taking Galois conjugates of the generators of $CH^2(\mathbbm{Q}(\zeta_5),3)$ and pushing them down to $CH^2(\mathbbm{Q}(\sqrt{5}),3)$ via the canonical inclusion $\mathbbm{Q}(\sqrt{5}) \hookrightarrow \mathbbm{Q}(\zeta_5)$ that there is a linear combination of cycles in $CH^2(\mathbbm{Q}(\sqrt{5}),3)$ equivalent to the ones ``coming down'' generating this group, but there is no obvious way of constructing them.

For example, one can make use of the Abel-Jacobi map once again to check that the image of the cycle $$(C_{\frac{1}{2}(\sqrt{5}-1)} - C_{\frac{1}{4}(\sqrt{5}-1)^2})\in CH^2(\mathbbm{Q}(\sqrt{5}),3)$$ in Deligne cohomology is equal to $\pi^2/30$ showing that the order of this cycle is at least $120$. Combined with the result above on the isomorphic cyclotomic field, its order must be equal to $120$ turning it into a generator of $CH^2(\mathbbm{Q}(\sqrt{5}),3)$. However without the aid of \cite{Lew:1982} and the regulator map of \cite{KLM04} there would be no way of detecting cycles of this kind.
\end{rem}

\subsection{$CH^2(\MQ(\zeta_8),3)$:}
As a last example we consider another cyclotomic field containing three quadratic subfields. In order to find a generator of its Chow group, we shall start with a distribution relation again. $$8C_1 = 64\left(C_{\zeta_8}+C_{i}+C_{\zeta^3_8}+C_{-1}+C_{\zeta^5_8}+C_{-i}+C_{\zeta^7_8}+C_{1}  \right).$$ Using the distribution relation for the fourth roots of unity, i.e. $$4C_1 = 16(C_i+C_{-1}+C_{-i}+C_1),$$ we deduce
\begin{align*}
-8C_1 &=  64\left(C_{\zeta_8}+C_{\zeta^3_8}+C_{\zeta^5_8}+C_{\zeta^7_8}  \right)\\
-192(C_{\zeta_8} +C_{\zeta^7_8}) &= 192(C_{\zeta^3_8} +C_{\zeta^5_8}).
\end{align*}
Let us now try to relate the terms in brackets using the inversion relation and some auxiliary relations between terms with a constant on the right. Note in particular that we already multiplied by $8$ in order to kill torsion terms:
\begin{align*}
16C_{\zeta_8} + & 16C_{\zeta^7_8} - 8 C_1 = 8Z(\zeta^n_8,1-\zeta^n_8)+8Z(\zeta^{n-1}_8,1-\zeta^{n-1}_8) + 8Z(\bar\zeta^n_8,1-\bar\zeta^n_8)\\ & +8Z(\bar\zeta^{n-1}_8,1-\bar\zeta^{n-1}_8)
-8Z\left(\bar\zeta^{n-1}_8,\frac{1}{1-\zeta^n_8}\right)-8Z\left(\zeta^{n-1}_8,\frac{1}{1-\bar\zeta^n_8}\right)\\ &-8Z\left(\bar\zeta^n_8,\frac{1}{1-\zeta^{n-1}_8}\right)-8Z\left(\zeta^n_8,\frac{1}{1-\bar\zeta^{n-1}_8}\right)
-8\left[\frac{z-1}{z-\zeta^n_8},z,1-\zeta^{n-1}_8\right]\\ &-8\left[\frac{z-1}{z-\zeta^{n-1}_8},z,1-\zeta^n_8\right].
\end{align*}
Now we simplify the terms above:
\begin{align*}
16C_{\zeta_8} +  & 16C_{\zeta^7_8} - 8 C_1 = -8Z\left(\zeta^n_8,(1-\zeta^n_8)(1-\bar\zeta^{n-1}_8)\right)-8Z(\bar\zeta^n_8,(1-\bar\zeta^n_8)(1-\zeta^{n-1}_8))\\
 &\qquad-8Z(\zeta^{n-1}_8,(1-\zeta^n_8)(1-\bar\zeta^{n-1}_8))-8Z(\bar\zeta^{n-1}_8,(1-\bar\zeta^n_8)(1-\zeta^{n-1}_8))\\
&\qquad-8\left[\frac{z-1}{z-\zeta^n_8},z,1-\zeta^{n-1}_8\right]-8\left[\frac{z-1}{z-\zeta^{n-1}_8},z,1-\zeta^n_8\right]\\
\intertext{and further}
&=-8Z(\zeta^n_8,(1-\zeta^n_8)(1-\bar\zeta^{n-1}_8))-8Z(\bar\zeta^n_8,(1-\zeta^n_8)(1-\bar\zeta^{n-1}_8))-8Z(\zeta_8^n,\bar\zeta_8)\\
 &\qquad-8Z(\zeta^{n-1}_8,(1-\zeta^n_8)(1-\bar\zeta^{n-1}_8))-8Z(\bar\zeta^{n-1}_8,(1-\zeta^n_8)(1-\bar\zeta^{n-1}_8))\\
&\qquad -8Z(\bar\zeta_8^{n-1},\bar\zeta_8)-8\left[\frac{z-1}{z-\zeta^n_8},z,1-\zeta^{n-1}_8\right]-8\left[\frac{z-1}{z-\zeta^{n-1}_8},z,1-\zeta^n_8\right]\\
&=-8Z(\zeta_8^n,\bar\zeta_8)-8Z(\bar\zeta_8^{n-1},\bar\zeta_8).
\end{align*}
The last step follows from the fact that
\begin{align*}
-8Z(\zeta^n_8,(1-\zeta^n_8)&(1-\bar\zeta^{n-1}_8))- 8Z(\bar\zeta^n_8,(1-\zeta^n_8)(1-\bar\zeta^{n-1}_8))  \\
&= 8\left[\frac{z-1}{z-\zeta^n_8},z,(1-\zeta^n_8)(1-\bar\zeta^{n-1}_8)\right] = 8\left[\frac{z-1}{z-\zeta^n_8},z,1-\bar\zeta^{n-1}_8\right]
\end{align*}
and analogously for the other term.
Finally it easy to see that $$0=8Z(\zeta_8^n,1) = 64Z(\zeta_8^n,\zeta_8^m)$$ since extra terms are $8$-torsion. So we conclude $128C_{\zeta_8} +  128C_{\bar\zeta_8} + 16 C_1 = 0$ or in other words $394(C_{\zeta_8}+C_{\bar\zeta_8}) = 0$. Again, not $C_{\zeta_8}$, but $8C_{\zeta_8}\in CH^2(\mathbbm{Q}(\zeta_8),3)$ so that the cycle $8(C_{\zeta_8}+C_{\bar\zeta_8})=8(C_{\zeta^3_8}+C_{\bar\zeta^3_8})$ is $48$-torsion.

Invoking a regulator argument as in the above case, we calculate the image of this cycle in $H^1_{\mathcal{D}}(\mathsf{Spec}(\mathbbm{Q})(\zeta_5))$, $\mathbbm{Z}(2))\cong \mathbbm{C} / 4\pi^2\mathbbm{Z}$ to be $\pi^2/12$, so that the order of this cycle is exactly $48$.

\begin{prop}
 The torsion part of the group $CH^2(\mathbbm{Q}(\zeta_8),3)\cong\mathbbm{Z}^2\oplus \mathbbm{Z} / 48\mathbbm{Z}$ is generated by the cycle $8(C_{\zeta_8}+C_{\bar\zeta_8})\in CH^2(\mathbbm{Q}(\zeta_5),3)$. % subject to the distribution relations \eqref{eq:rel2}, the five-term relation \eqref{eq:five}, and relation \eqref{eq:rel1}.
\end{prop}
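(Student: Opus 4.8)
The plan is to follow the pattern of the three earlier examples, combining the relations derived just above with Weibel's structure theorem. The group $CH^2(\mathbbm{Q}(\zeta_8),3)\cong\mathbbm{Z}^2\oplus\mathbbm{Z}/48\mathbbm{Z}$ --- $\mathbbm{Q}(\zeta_8)$ being totally imaginary with $r_2=2$ and $w_2(\mathbbm{Q}(\zeta_8))=48$ --- has cyclic torsion subgroup $\mathbbm{Z}/48\mathbbm{Z}$, which is generated by \emph{any} class of order exactly $48$. So it suffices to show that the cycle $8(C_{\zeta_8}+C_{\bar\zeta_8})$ is torsion (whence it automatically lies in the $\mathbbm{Z}/48\mathbbm{Z}$-summand) and that its order is exactly $48$. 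Here one works with $8C_{\zeta_8}$ rather than $C_{\zeta_8}$, exactly as one worked with $4C_i$ in the $\mathbbm{Q}(i)$-case: $\partial C_{\zeta_8}=(1-\zeta_8,\zeta_8)$ and $8(\zeta_8)=(\zeta_8^8)\in\partial Z^1(\mathbbm{Q}(\zeta_8),2)$, so $8(C_{\zeta_8}+C_{\bar\zeta_8})$ does define a class in $CH^2(\mathbbm{Q}(\zeta_8),3)$.

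For the upper bound I would argue as in the previous subsections: start from the distribution relation \eqref{eq:rel2} for the primitive eighth roots of unity, use the distribution relation for the fourth roots to cancel the terms $C_{\pm 1}$ and $C_{\pm i}$, obtaining $-192(C_{\zeta_8}+C_{\bar\zeta_8})=192(C_{\zeta_8^3}+C_{\bar\zeta_8^3})$; then substitute $a=\zeta_8^{n}$, $b=\zeta_8^{n-1}$ into the inversion relation of Proposition \ref{prop:inv}. After multiplying through by $2$, the extra terms of shape $\left[\frac{z-1}{z-a},z,1-a\right]$ drop out by the corollary following \eqref{eq:five}, the terms carrying an eighth root of unity in the rightmost coordinate are $8$-torsion, and the surviving $Z$-terms collapse under the multiplicativity \eqref{eq:zs} once one uses that a product of roots of unity is again a root of unity; the residual terms $Z(\zeta_8',\zeta_8'')$ are annihilated by $64$ by the induction already used in the $\zeta_5$-case (equation (ii) of Proposition \ref{prop:rules1} gives $Z(\zeta',\zeta^{2m})=2Z(\zeta',\zeta^{m})$ modulo $8$-torsion, whence $0=Z(\zeta',1)=8Z(\zeta',\zeta)$ modulo $8$-torsion). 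This produces $128C_{\zeta_8}+128C_{\bar\zeta_8}+16C_1=0$ in \quotient, and since $24C_1=0$ --- by the injectivity of Proposition \ref{prop:levine} applied to $\mathbbm{Q}\hookrightarrow\mathbbm{Q}(\zeta_8)$ together with the already-settled $\mathbbm{Q}$-case --- multiplying by $3$ gives $384(C_{\zeta_8}+C_{\bar\zeta_8})=0$; hence the order of $8(C_{\zeta_8}+C_{\bar\zeta_8})$ divides $48$. For the matching lower bound I would apply $-\Phi_{2,3}$ under a complex embedding, evaluating \eqref{eq:regk3} to $8(Li_2(\zeta_8)+Li_2(\bar\zeta_8))$; by the dilogarithm inversion formula this lands on an element of order exactly $48$ in $H^1_{\mathcal{D}}(\mathsf{Spec}(\mathbbm{C})^{an},\mathbbm{Z}(2))\cong\mathbbm{C}/4\pi^2\mathbbm{Z}$, namely $\pi^2/12$. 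So $48$ divides the order; combining the two bounds, $8(C_{\zeta_8}+C_{\bar\zeta_8})$ has order exactly $48$ and therefore generates the torsion subgroup of $CH^2(\mathbbm{Q}(\zeta_8),3)$.

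The step I expect to be the genuine obstacle is the inversion-relation computation with the specific substitutions $a=\zeta_8^{n}$, $b=\zeta_8^{n-1}$ --- the source of what the author has already called the ``main technical problems''. One has to keep track of the signs and of all the ``lower order'' terms $\left[\frac{z-\ast}{z-\ast},z,\ast\right]$ produced by Proposition \ref{prop:inv}, repeatedly pair them off and rewrite them via Lemma \ref{lem:permute}, \eqref{eq:zs} and the $2$-torsion corollaries, and then verify that after multiplying by the appropriate power of $2$ every residual term really does vanish; working in a cyclotomic field makes the needed arithmetic relations among the $\zeta_8^{j}$ and the $1-\zeta_8^{j}$ available, but the bookkeeping is delicate. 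A secondary point is to pin down $8(Li_2(\zeta_8)+Li_2(\bar\zeta_8))$ modulo $\mathbbm{Z}(2)$ precisely enough to be sure the order is exactly $48$ and not a proper divisor of it; this is a routine dilogarithm identity, entirely parallel to the $\zeta_5$ computation.
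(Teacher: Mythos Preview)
Your proposal is correct and follows essentially the same route as the paper: the distribution relations for eighth and fourth roots to get $-192(C_{\zeta_8}+C_{\bar\zeta_8})=192(C_{\zeta_8^3}+C_{\bar\zeta_8^3})$, then the inversion relation of Proposition~\ref{prop:inv} with $a=\zeta_8^{n}$, $b=\zeta_8^{n-1}$ simplified down to $128C_{\zeta_8}+128C_{\bar\zeta_8}+16C_1=0$, combined with $24C_1=0$ for the upper bound, and the Abel--Jacobi value $\pi^2/12$ for the matching lower bound. The only cosmetic difference is that the paper multiplies the inversion relation by $8$ at the outset (rather than by $2$ first and then a further power of $2$ later) so that all the $8$-torsion lower-order terms disappear in one stroke; your staged approach reaches the same endpoint.
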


\begin{rem}
 One knows that $\mathbbm{Q}(\zeta_8)$ contains three quadratic subfields, namely $\mathbbm{Q}(\sqrt{2})$, $\mathbbm{Q}(\sqrt{-2})$, and $\mathbbm{Q}(i)$. It is easy to see that $4(C_{i}+C_{-i})\in CH^2(\mathbbm{Q}(\zeta_8),3)$ already lives in $CH^2(\mathbbm{Q}(i),3)$, and -- as we have seen -- also generates this group. Unfortunately, it is far more complicated to find generators of the other two quadratic subfields.

One needs to consider $Gal(\mathbbm{Q}(\zeta_8)\, |\, \mathbbm{Q}(\sqrt{\pm 2}))$ and take Galois conjugates of the generators of the higher Chow group of the cyclotomic field. A descent argument as above ensures that there is a cycle in the quadratic subfield generating its higher Chow group. Unfortunately this is not constructive. But with the help of \cite{Lew:1982} again, one finds that $$2C_{(\sqrt{2}-1)^4} - 12C_{(\sqrt{2}-1)^2} + 8C_{\sqrt{2}-1} = C_1.$$ Indeed, half of the left hand side is a cycle in $CH^2(\mathbbm{Q}(\sqrt{2}),3)$ having order $48$, i.e. is a generator of the Chow group.

A clever linear combination of terms evaluated at algebraic arguments certainly gives a generator of $CH^2(\mathbbm{Q}(\sqrt{-2}),3)$. But we did not try to determine this combination.
\end{rem}

\begin{rem}
The above results are based on the following reasoning: As one knows from \cite{Wei}, the group $K_3(F)$ for a number field $F$ with $r_1$ real and $r_2$ pairs of conjugate complex places is given by $K^{ind}_3(F)\cong \MZ^{r_2}\oplus\text{ torsion}$. It is also proved in this article that the torsion part of the $K$--group of a cyclotomic field $\MQ(\zeta_p)$, $p$ an odd prime, is the same as the one of the maximal real subfield $\MQ(\zeta_p+\overline{\zeta}_p)$. Thus combinations of Totaro cycles with cyclotomic arguments which already live in the maximal real subfield are most likely to generate the torsion part of $CH^2(F,3)$.
\end{rem}

\subsection{A remark on $\MQ_p$:}
Since our techniques apply to infinite fields, we can also consider $p$-adic fields: From \cite[Sect.5]{Wei}, one knows that if $E$ is a local field with residue field $\mathbbm{F}_q$ of characteristic $p$, then the Milnor $K$--groups are uncountable, uniquely divisible abelian groups for $n\geq 3$. Further, they are summands of the Quillen $K$--groups. These are known:

\begin{prop}{\cite[Prop. 5.3]{Wei}}
If $i > 0$ there is a summand of $K_{3}(E)$ isomorphic to $K_{3}(\mathbbm{F}_q)\cong\MZ / w_2(\mathbbm{F}_q)\mathbbm{Z}$, where $$w_2(\mathbbm{F}_q) = \begin{cases} 24, & q=2, q=3, \\ q^2-1, & else. \end{cases}$$ The complementary summand is uniquely $\ell$-divisible for every prime $\ell\neq p$, i.e., a $\MZ_{(p)}$-module.
\end{prop}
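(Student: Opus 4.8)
Strictly speaking this is \cite[Prop.~5.3]{Wei}, so nothing new is to be proved; but let me indicate how the argument runs, since the engine --- a localization sequence together with rigidity --- is the local counterpart of the tools used above. The plan is to play the $K$-theory of $E$, of its ring of integers $\mathcal{O}_E$, and of the residue field $\mathbbm{F}_q$ off against one another. The first ingredient is Gabber's rigidity theorem for the Henselian pair $(\mathcal{O}_E,\mathfrak{m})$: for every $m$ prime to $p$ the reduction map induces an isomorphism $K_n(\mathcal{O}_E;\MZ/m)\cong K_n(\mathbbm{F}_q;\MZ/m)$, equivalently the relative groups $K_n(\mathcal{O}_E,\mathfrak{m};\MZ/m)$ vanish. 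Reading this off the universal-coefficient sequences shows that the relative group $K_n(\mathcal{O}_E,\mathfrak{m})$ is uniquely $\ell$-divisible for every prime $\ell\neq p$ --- that is, a $\MZ_{(p)}$-module --- and, since by Quillen $K_3(\mathbbm{F}_q)\cong\MZ/(q^2-1)$ (of order prime to $p$) and $K_2(\mathbbm{F}_q)=0$, also that the reduction $K_3(\mathcal{O}_E)\to K_3(\mathbbm{F}_q)$ is surjective.

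Next I would feed this into Quillen's localization sequence for the discrete valuation ring $\mathcal{O}_E$,
$$\cdots\to K_n(\mathbbm{F}_q)\to K_n(\mathcal{O}_E)\to K_n(E)\xrightarrow{\partial}K_{n-1}(\mathbbm{F}_q)\to\cdots,$$
whose boundary maps are the tame symbols and are split surjective, a uniformizer furnishing the splitting. Using $K_2(\mathbbm{F}_q)=0$ this identifies $K_3(E)\cong K_3(\mathcal{O}_E)$, so the question reduces to the relative fibre sequence $K(\mathcal{O}_E,\mathfrak{m})\to K(\mathcal{O}_E)\to K(\mathbbm{F}_q)$, which by the previous step and $K_4(\mathbbm{F}_q)=0$ produces a short exact sequence
$$0\to K_3(\mathcal{O}_E,\mathfrak{m})\to K_3(E)\to K_3(\mathbbm{F}_q)\to0$$
with finite quotient $K_3(\mathbbm{F}_q)\cong\MZ/(q^2-1)$ of order prime to $p$ and uniquely $(q^2-1)$-divisible kernel. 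Hence $\mathrm{Ext}^1_{\MZ}\!\bigl(K_3(\mathbbm{F}_q),K_3(\mathcal{O}_E,\mathfrak{m})\bigr)=0$, the sequence splits, and $K_3(E)\cong K_3(\mathbbm{F}_q)\oplus K_3(\mathcal{O}_E,\mathfrak{m})$ with the complementary summand a $\MZ_{(p)}$-module, as asserted; and since $K_3^M(E)$ is uniquely divisible, it has no nonzero image in the finite group $K_3(\mathbbm{F}_q)$ and therefore lives entirely in this complement, in accordance with $CH^2(E,3)\cong K_3^{ind}(E)$ carrying the finite part.

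The step I expect to be the real obstacle is the prime $p$ itself, where rigidity is useless. Pinning down the $p$-primary torsion of $K_3(\mathcal{O}_E)$ and $K_3(E)$ --- and in particular checking that the complement is \emph{uniquely} $\ell$-divisible for all $\ell\neq p$ with no stray $p$-primary contribution --- requires the $p$-adic analytic structure of $\mathcal{O}_E^\times$ together with \'etale descent for $K$-theory with $p$-power coefficients (the weight-two case of the Bloch--Kato isomorphism, due to Merkurjev--Suslin). This is also where the exceptional values $24$ at $q=2$ and $q=3$ come from: they reflect the $p$-part of the local invariant $w_2(E)$, which is nontrivial only in residue characteristics $2$ and $3$. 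All of this is carried out in detail in \cite[Sect.~5]{Wei}, which we simply cite. For us the payoff is only the clean fact that, away from the residue characteristic, $CH^2(E,3)\cong K_3^{ind}(E)$ again contains a cyclotomic torsion summand of exactly the shape encountered in the number-field case, so that the Totaro-cycle relations established above yield generators of its prime-to-$p$ part in precisely the same way.
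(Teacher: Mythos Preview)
The paper does not prove this proposition at all; it is simply quoted from \cite[Prop.~5.3]{Wei} and used as input for the final subsection. Your outline via Gabber rigidity for the Henselian pair $(\mathcal{O}_E,\mathfrak{m})$, Quillen's localization sequence, and the splitting forced by $\mathrm{Ext}^1\bigl(\MZ/(q^2-1),\text{uniquely }(q^2-1)\text{-divisible}\bigr)=0$ is the standard argument and is correct as a sketch --- indeed it supplies more than the paper does, and you rightly flag that the $p$-primary analysis is the genuinely delicate part deferred to \cite{Wei}.

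One small caution concerning the final paragraph: your explanation of the exceptional values $24$ at $q=2,3$ does not match Quillen's computation $K_3(\mathbbm{F}_q)\cong\MZ/(q^2-1)$, which gives $|K_3(\mathbbm{F}_2)|=3$ and $|K_3(\mathbbm{F}_3)|=8$. The displayed case distinction in the statement appears to conflate $w_2(\mathbbm{F}_q)$ with the local $w_2(E)$ (which is always divisible by $24$); this is a wrinkle in the statement as transcribed, not something your proof needs to account for.
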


Since we are interested in the indecomposable part of $K_3(E)$, we have to determine the Milnor $K$--group $K_3^M(E)$. But as remarked in \cite[Sect. 5]{Wei}, this is only known to be an uncountable, uniquely divisible abelian group. Thus, we see that $$K_3^{ind}(E)\cong \MZ / w_2(E)\MZ\oplus M,$$ where $M$ denotes the quotient of a $\MZ_{(p)}$-module by the Milnor $K$--group. Further, $w_2(E)$ is divisible by $24$.

\begin{prop}
 Let $F=\MQ_p$ for $p=2,3,5$. Then there is a summand in the group $CH^2(F,3)$ generated by $C_1\in CH^2(F,3)$.
\end{prop}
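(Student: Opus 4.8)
The plan is to identify the cyclic group $\langle C_1\rangle$ with the torsion subgroup of $CH^2(\MQ_p,3)$ and to check that this torsion subgroup is cyclic of order $24$. First I would record the structure of the ambient group. By the discussion preceding the statement, $CH^2(\MQ_p,3)\cong K_3^{ind}(\MQ_p)\cong \MZ/w_2(\MQ_p)\MZ\oplus M$, where $M$ is the quotient of a $\MZ_{(p)}$-module by the uniquely divisible group $K_3^M(\MQ_p)$. Applying the snake lemma to multiplication by $n$ on the short exact sequence $0\to K_3^M(\MQ_p)\to K_3(\MQ_p)\to K_3^{ind}(\MQ_p)\to 0$, whose kernel is uniquely divisible (hence torsion-free and divisible), one gets $K_3^{ind}(\MQ_p)_{tors}\cong K_3(\MQ_p)_{tors}$; for a local field with finite residue field this torsion subgroup is cyclic of order $w_2(\MQ_p)$ (it is $H^0(\MQ_p,\MQ/\MZ(2))$). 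Comparing with the splitting $\MZ/w_2(\MQ_p)\MZ\oplus M$ forces $M$ to be torsion-free, so $\MZ/w_2(\MQ_p)\MZ$ \emph{is} the torsion subgroup of $CH^2(\MQ_p,3)$, in particular a direct summand.

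Next I would pin down $w_2(\MQ_p)$. For a prime $\ell\neq p$ the $\ell$-part of $w_2(\MQ_p)$ is computed on the unramified extensions $\MQ_p(\mu_{\ell^k})$, where Frobenius acts on $\MZ/\ell^k(2)$ by multiplication by $p^2$, so it equals the $\ell$-part of $p^2-1$; the $p$-part is governed by the totally ramified tower $\MQ_p(\mu_{p^k})$ and is nontrivial only for $p\in\{2,3\}$. A short computation then yields $w_2(\MQ_2)=w_2(\MQ_3)=w_2(\MQ_5)=24$ (equivalently $w_2(\MQ_p)=w_2(\mathbbm{F}_p)$ in these three cases), matching the value $w_2(\mathbbm{F}_p)=24$ recorded above.

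It then remains to see that $C_1$ generates this torsion subgroup, i.e. that it has order exactly $24$ in $CH^2(\MQ_p,3)$. The field inclusion $\MQ\hookrightarrow\MQ_p$ induces a homomorphism $CH^2(\MQ,3)\to CH^2(\MQ_p,3)$ carrying the Totaro cycle $C_1$ over $\MQ$ to the Totaro cycle $C_1$ over $\MQ_p$; under the identification with indecomposable $K_3$ this is the map induced by the field extension, which by Levine's theorem (Proposition~\ref{prop:levine}) is injective. Since $C_1$ generates $CH^2(\MQ,3)\cong\MZ/24\MZ$ and $24C_1=0$ already holds there, its image in $CH^2(\MQ_p,3)$ has order exactly $24$; being an order-$24$ element of the torsion subgroup $\MZ/24\MZ$ it generates it, so $\langle C_1\rangle=CH^2(\MQ_p,3)_{tors}\cong\MZ/24\MZ$ is the asserted summand.

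The main obstacle is the torsion-freeness of $M$ in the first step: one must know that all torsion of $K_3^{ind}(\MQ_p)$ already lies in the $\MZ/w_2(\MQ_p)\MZ$-summand. For $p$ odd this is immediate from the (motivic/\'etale) descent computation of $K_3(\MQ_p;\MZ_p)$, whose torsion has order the $p$-part of $w_2(\MQ_p)$ — the same as for the residue field $\mathbbm{F}_p$ — and for $p=2$ it follows from the corresponding $2$-adic statement; both are among the facts about local $K$-theory underlying the results cited for Proposition~\ref{prop:levine}. If one only wants the weaker reading that $\langle C_1\rangle$ is \emph{a} direct summand, the argument can instead be carried out prime by prime: the prime-to-$p$ torsion of $CH^2(\MQ_p,3)$ is a summand (as $M$ is a $\MZ_{(p)}$-module) and coincides with the prime-to-$p$ part of $\langle C_1\rangle$, which for $p=5$ already is all of $\langle C_1\rangle$, while for $p=2,3$ one is reduced to splitting off the order-$p^{v_p(24)}$ part, again handled once $M$ has no $p$-torsion.
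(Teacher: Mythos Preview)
Your argument is correct and follows the same strategy as the paper: use the known structure of $K_3^{ind}$ of a local field together with Levine's injectivity result (Proposition~\ref{prop:levine}) for the inclusion $\MQ\hookrightarrow\MQ_p$ to pin down the order of $C_1$. The paper's proof is terser: it simply records that $CH^2(\MQ_p,3)$ contains a cyclic direct summand of order $24$ (from the quoted Weibel proposition, since $w_2(\mathbb{F}_p)=24$ for $p=2,3,5$) and that $C_1$ has order exactly $24$ there by Levine's theorem. Your version supplies two points the paper leaves implicit: the explicit computation $w_2(\MQ_p)=24$, and the verification that the complementary summand $M$ is torsion-free, so that an element of order $24$ necessarily generates the $\MZ/24\MZ$ summand. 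The latter is a genuine gap-filling step---without it, knowing only that a $\MZ/24\MZ$ summand exists and that $C_1$ has order $24$ does not by itself force $\langle C_1\rangle$ to be that summand when $p=2$ or $3$---so your extra care is well placed.
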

\begin{proof}
 From the discussion above, the Chow group $CH^2(F,3)$ of the fields in question contains a finite direct summand of order $24$. We know that $C_1\in CH^2(\MQ,3)$ is of order $24$ and by Levine's theorem (proposition \ref{prop:levine}) above, its order cannot decrease in any extension.
\end{proof}

%For the other $p$-adic fields, one might speculate that cyclotomic cycles comparable to the generators for the Chow groups of cyclotomic fields, generate the Chow group. But this will be explored elsewhere.

\end{document}